\documentclass[12pt, reqno, a4paper]{amsart}

\usepackage{ amssymb, amsmath, enumerate, amsfonts, amsthm, mathrsfs, url, bm, mathtools}

\usepackage{xcolor}  	
\usepackage{hyperref}
\hypersetup{
colorlinks,
   linkcolor={cyan!80!black},
   citecolor={cyan!80!black},
 urlcolor={cyan!80!black}
}

\usepackage{color}

\usepackage[margin=1in]{geometry}

\RequirePackage{doi}

\usepackage{amscd}
\usepackage{amsfonts}
\usepackage{float}
\usepackage{color}
\usepackage[
backend=biber,
style=alphabetic,
]{biblatex}
\usepackage{bookmark}

\renewbibmacro{in:}{}
\DeclareFieldFormat{title}{#1}

\DeclareFieldFormat[article]{title}{\mkbibemph{#1}}       % Articles
\DeclareFieldFormat[incollection]{title}{\mkbibemph{#1}}  % Incollection chapters
\DeclareFieldFormat[book]{title}{\mkbibemph{#1}}          % Standalone books
\DeclareFieldFormat[incollection]{booktitle}{#1}          % Plain for incollection book titles
\DeclareFieldFormat[article]{journaltitle}{#1}            % Plain for journals

\AtEveryBibitem{%
  \ifentrytype{misc}{\DeclareFieldFormat{title}{\mkbibemph{#1}}}{}}

\DeclareFieldFormat{eprint:eprint}{arXiv:\href{https://arxiv.org/abs/#1}{#1}}

\DeclareFieldFormat[inproceedings]{title}{\mkbibemph{#1}}

\DeclareFieldFormat[inproceedings]{booktitle}{#1}

\usepackage{amssymb}
\newtheorem{theorem}{Theorem}[section]
\newtheorem{lemma}{Lemma}[section]
\newtheorem{example}{Example}[section]
\newtheorem{corollary}{Corollary}[section]
\newtheorem{proposition}{Proposition}[section]

\theoremstyle{definition}
\newtheorem{definition}{Definition}[section]

\theoremstyle{remark}

\numberwithin{equation}{section}

\newcommand{\C}{\mathbb C}
\newcommand{\R}{\mathbb R}
\newcommand{\Q}{\mathbb Q}
\newcommand{\Z}{\mathbb Z}
\newcommand{\N}{\mathbb N}
\newcommand{\E}{\mathbb E}
\newcommand{\T}{\mathbb T}
\newcommand{\M}{\mathcal M}

\newcommand{\Mod}[1]{\ (\mathrm{mod}\ #1)}

\renewcommand{\Re}{\operatorname{Re}}

\renewcommand{\leq}{\leqslant}
\renewcommand{\geq}{\geqslant}

\begin{document}

\title[Multiplicative recurrence of Piatetski--Shapiro and nil-Bohr sets]{Multiplicative recurrence of Piatetski--Shapiro and nil-Bohr sets}

%    Only \author and \address are required; other information is
%    optional.  Remove any unused author tags.

%    author one information
% \author[short version for running head]{name for top of paper}
\author{Sun-Kai Leung}
\address{Mathematical Institute, University of Oxford, Andrew Wiles Building\\ Radcliffe Observatory Quarter, Woodstock Rd\\
Oxford OX2 6GG\\
United Kingdom}
\email{sunkaileung@gmail.com}

%    author two information

  %  \subjclass is required.
\subjclass[2020]{Primary 37A44; Secondary 11N64, 11B05, 11K65, 11L07}

\date{}

\dedicatory{}

%    Abstract is required.
\begin{abstract}
Adapting the circle method developed by Frantzikinakis--Klurman--Moreira, we establish a sufficient criterion for a set of natural numbers to be multiplicatively recurrent. We apply this criterion to Piatetski--Shapiro sets with small exponents and to regular irrational nil-Bohr sets. We also study the value distribution of unimodular completely multiplicative functions along these sets.
\end{abstract}

\maketitle

\section{Introduction}

\textit{Multiplicative Ramsey theory} studies recurrence and partition regularity
for the multiplicative structure of the natural numbers. Following
Bergelson \cite{MR2191223}, this subject has been developed using ergodic
theory and the structure of multiplicative functions, notably by
Frantzikinakis--Host \cite{MR3556289}. For instance, Donoso, Le, Moreira,
and Sun \cite{MR4594405} proved that an arithmetic progression
$a\N+b$ is a \textit{set of multiplicative recurrence} if and only if
$a\mid b(b-1)$. Further examples arise from linear fractional sequences \cite{MR4960398,leung2025multiplicativerecurrencemobiustransformations}
and homogeneous quadratic forms
\cite{frantzikinakis2023partitionregularitypythagoreanpairs,
frantzikinakis2024partitionregularitygeneralizedpythagorean}.
We refer to \cite{frantzikinakis2024partitionregularityhomogeneousquadratics}
for an exposition of the latter developments.

We first recall the notion of recurrence used throughout the paper.

\begin{definition}[Set of multiplicative recurrence]
A set $R\subseteq\Q_{>0}$ is called a
\textit{set of multiplicative recurrence} or \textit{multiplicatively recurrent} if, for every
measure-preserving action $T=(T_n)_{n\in\N}$ of $(\N,\times)$ on a
probability space $(X,\mathcal B,\mu)$ and every measurable set
$B\in\mathcal B$ of positive measure, there exist distinct
$m,n \in \N$ such that $ m/n\in R$ and
\begin{align*}
 \mu(T_m^{-1}B\cap T_n^{-1}B)>0.
\end{align*}
Here $T_1=\rm id$ and $T_{mn}=T_m\circ T_n$ for all $m,n \in \N$. 
\end{definition}

Our sufficient criterion for multiplicative recurrence is formulated in terms of
means of unimodular completely multiplicative functions. Write
\begin{align*}
 \M:=\{f:\N\to\T:f(mn)=f(m)f(n)\text{ for all }m,n\in\N\},
\end{align*}
where $\T:=\{z\in\C:|z|=1\}$. Following
\cite[equation (2.11)]{frantzikinakis2023partitionregularitypythagoreanpairs},
we consider the \textit{multiplicative F\o{}lner sequence}
\begin{align}\label{eq:folner}
 \Phi_K:=\left\{\prod_{p\leq K}p^{v_p}:K< v_p \leq 2K\right\}
 \qquad(K\in\N).
\end{align}
In particular, for
every fixed $r \in \Q_{>0}$, we have
\begin{align*}
\lim_{K \to \infty} \frac{|r\Phi_K\mathbin{\cap}\Phi_K|}{|\Phi_K|} =1.
\end{align*}

In this paper, we introduce the following notion of multiplicative pseudorandomness, comparing means of multiplicative functions
on a set and its dilates in order to apply the circle method developed by
Frantzikinakis--Klurman--Moreira
\cite{frantzikinakis2023partitionregularitypythagoreanpairs}.

\begin{definition}[Multiplicative pseudorandom set]\label{def:mpr}
A set $R\subseteq\N$ is called a \textit{multiplicative pseudorandom set} or \textit{multiplicatively pseudorandom}
if, for every sufficiently large $K \in \N$ and every $Q\in\Phi_K$,
the set $R\cap Q\N$ is infinite and, for every $f\in\M$, we have
\begin{align*}
 \lim_{N\to\infty}\left|
 \E_{\substack{n\leq N\\Qn\in R}}f(n)
 -\E_{\substack{n\leq N\\n\in R}}f(n)\right|=0.
\end{align*}
Each expectation is normalized by the cardinality of the set in its
subscript.
\end{definition}

%The limit in \eqref{eq:mpr} concerns the difference of two averages.
%Neither average is required to converge, and no uniformity in the
%function is assumed. The examples below satisfy the stronger version
%with the supremum over all $f\in\M$ inside the limit.

Our first result gives a sufficient criterion for multiplicative recurrence.

\begin{theorem}\label{thm:main}
Every multiplicatively pseudorandom set is multiplicatively recurrent.
\end{theorem}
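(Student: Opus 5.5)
Given a measure-preserving action $T$ of $(\N,\times)$ and $B$ with $\mu(B)>0$, I would argue by contradiction. Suppose $\mu(T_m^{-1}B\cap T_n^{-1}B)=0$ whenever $m/n\in R$ with $m\neq n$. I would restrict to pairs $m=Qn'\cdot r$, $n=Q n''$, or more simply consider pairs $(Qr\ell, Q\ell)$ with $r\in R$, where $Q\in\Phi_K$ and $\ell\in\Phi_K$ absorb the dilation.

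The first step is the spectral reduction. Set $F=\mathbf 1_B$. By Bochner--Herglotz for the group of positive rationals (which applies after extending the action to $\Q_{>0}$ via an invertible extension), there is a positive finite measure $\sigma$ on $\M$ with
\[
\int F\cdot T_{r}F\,d\mu=\int_{\M} f(r)\,d\sigma(f)\quad\text{for } r\in\Q_{>0},
\]
where $T_{m/n}$ means $T_m\circ T_n^{-1}$. Invariance gives $\mu(T_m^{-1}B\cap T_n^{-1}B)=\int F\cdot T_{m/n}F$. The key positivity input is the following. Averaging over the multiplicative Følner sequence $\Phi_K$ and using von Neumann's ergodic theorem, the mass $\sigma(\{1\})$ of the trivial character is $\|P F\|^2\ge \mu(B)^2>0$, where $P$ is the projection onto invariant functions. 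Actually the inequality $\|PF\|^2\ge(\int F)^2$ is what we use. The same inequality holds for the \emph{pretentious} part: by Frantzikinakis--Klurman--Moreira, the functions $f$ with $f(p)=1$ for all $p\le K$ play the role of "major arcs". Here the point is that $f(Q)=1$ for all $Q$ built from small primes, along a subsequence.

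Next I would average the vanishing correlations over $r\in R\cap Q\N$. Taking $m=r$, $n=1$ gives
\[
0=\E_{\substack{n\le N\\ Qn\in R}}\int F\cdot T_{Qn}F\,d\mu=\int_{\M} f(Q)\,\E_{\substack{n\le N\\Qn\in R}} f(n)\,d\sigma(f),
\]
and also $0=\int \E_{n\le N,\,n\in R} f(n)\,d\sigma$. Pseudorandomness (Definition~\ref{def:mpr}) with dominated convergence lets me replace the first inner average by the second. Averaging then over $Q\in\Phi_K$ gives
\[
o_{N\to\infty}(1)=\int_{\M}\Big(\E_{Q\in\Phi_K}f(Q)\Big)A_N(f)\,d\sigma(f),\qquad A_N(f):=\E_{\substack{n\le N\\ n\in R}}f(n).
\]
As $K\to\infty$, $\E_{Q\in\Phi_K}f(Q)$ tends to $\mathbf 1_{f=1}$. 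This holds because $\Phi_K$ is a product of independent uniform exponent ranges, so the average factorizes over $p\le K$ and vanishes unless $f(p)$ is essentially $1$. Hence the limit is $\sigma(\{1\})\cdot A_N(1)=\sigma(\{1\})>0$, a contradiction.

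The main obstacle is the interchange of limits in $N$ and $K$. The pseudorandomness limit is not uniform in $f$, and the factorized average over $\Phi_K$ converges to $\mathbf 1_{f=1}$ only pointwise, possibly slowly for $f$ with $f(p)$ near $1$. I would handle this by fixing $K$ first and letting $N\to\infty$ along a subsequence where the weak-$*$ limits of $A_N$ exist, applying dominated convergence in $\sigma$. Then I would let $K\to\infty$, again by dominated convergence, since everything is bounded by $1$ and $\sigma$ is finite. A secondary point is that $r=Qn$ must differ from $1$, which holds for $K$ large. The positivity step also needs care: extending to $\Q_{>0}$ is fine because the correlation $\int F\cdot T_mF$ is positive definite on $\Q_{>0}$ via $m/n\mapsto\int T_mF\,T_nF$.
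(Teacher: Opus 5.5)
Your proposal is correct and follows essentially the paper's route. The ingredients are the same:
\begin{itemize}
\item the Bochner--Herglotz measure $\sigma$ on $\M$ with $\sigma(\{1\})\geq\mu(B)^2$;
\item pseudorandomness plus dominated convergence, for fixed $K$, to replace each $\E_{n\leq N,\,Qn\in R}f(n)$ by $\E_{n\leq N,\,n\in R}f(n)$;
\item then $\E_{Q\in\Phi_K}f(Q)\to 1_{\{f=1\}}$ to isolate $\sigma(\{1\})$.
\end{itemize}
The paper argues directly rather than by contradiction, which yields the stronger conclusion that infinitely many $n\in R$ satisfy $\mu(T_n^{-1}B\cap B)>\mu(B)^2-\varepsilon$.

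Two minor points. Your weak-$*$ subsequence is unnecessary: since $|A_N(f)|\leq1$, the contribution of $f\neq1$ is bounded by $\int_{\M\setminus\{1\}}|\E_{Q\in\Phi_K}f(Q)|\,d\sigma(f)$ uniformly in $N$, as in the paper's Proposition~\ref{prop}. Also, your side identity $0=\int A_N\,d\sigma$ holds only up to $o(1)$ if $1\in R$; this does not matter, since you never use it.
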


The multiplicative Furstenberg correspondence principle
\cite[Theorem~2.1]{MR4594405} gives the following
consequence for finite colorings.

\begin{corollary}\label{cor:colour}
Let $R\subseteq\N$ be a multiplicative pseudorandom set. Then for every
finite partition $\N=C_1\sqcup\cdots\sqcup \, C_s$, the intersection
$(C_i/C_i)\cap(R\setminus\{1\})$ is non-empty for some integer
$1\leq i\leq s$. In other words,
the set $R$ is a set of \textit{topological multiplicative recurrence}.
\end{corollary}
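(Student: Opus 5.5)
Let $R$ be a multiplicatively pseudorandom set. We would derive Corollary~\ref{cor:colour} from Theorem~\ref{thm:main} via the multiplicative Furstenberg correspondence principle \cite[Theorem~2.1]{MR4594405}. Fix a finite partition $\N=C_1\sqcup\cdots\sqcup C_s$. Since $(\Phi_K)$ is a multiplicative F\o{}lner sequence, for each $K$ we have $\sum_i |C_i\cap\Phi_K|/|\Phi_K|=1$. So for some $i$ the upper multiplicative density
\begin{align*}
\bar d_\Phi(C_i):=\limsup_{K\to\infty}\frac{|C_i\cap\Phi_K|}{|\Phi_K|}
\end{align*}
is at least $1/s>0$, and we fix such an $i$.

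The correspondence principle provides a measure-preserving action $(T_n)_{n\in\N}$ of $(\N,\times)$ on a probability space $(X,\mathcal B,\mu)$ and a set $B\in\mathcal B$ with $\mu(B)=\bar d_\Phi(C_i)>0$. It also gives, for all $m,n\in\N$,
\begin{align*}
\mu(T_m^{-1}B\cap T_n^{-1}B)\leq \bar d_\Phi\bigl(m^{-1}C_i\cap n^{-1}C_i\bigr),
\end{align*}
where $m^{-1}C:=\{x\in\N: mx\in C\}$. If the cited version uses a different F\o{}lner sequence or a Banach-type density, we run the same argument with that density. Only positivity of $\bar d(C_i)$ and the displayed inequality are needed.

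By Theorem~\ref{thm:main}, $R$ is multiplicatively recurrent. Hence there are distinct $m,n\in\N$ with $m/n\in R$ and $\mu(T_m^{-1}B\cap T_n^{-1}B)>0$. The inequality above then makes $m^{-1}C_i\cap n^{-1}C_i$ nonempty, so there is $x\in\N$ with $mx\in C_i$ and $nx\in C_i$. Therefore
\begin{align*}
\frac{m}{n}=\frac{mx}{nx}\in C_i/C_i.
\end{align*}
Since $m\neq n$ we have $m/n\neq1$, and $m/n\in R$. This gives $(C_i/C_i)\cap(R\setminus\{1\})\neq\emptyset$. Because the partition was arbitrary, $R$ is a set of topological multiplicative recurrence.

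All the difficulty lies in Theorem~\ref{thm:main}, which we take as given. The step that needs care is matching the precise form of the correspondence principle in \cite{MR4594405}: one must check that the density it uses is invariant under the dilations, which holds for $\Phi_K$ by the F\o{}lner property stated after \eqref{eq:folner}. One must also check that it gives the inequality in the direction written above. Both are standard.
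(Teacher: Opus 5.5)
Your proposal is correct and matches the paper's argument, which combines Theorem~\ref{thm:main} with the multiplicative Furstenberg correspondence principle \cite[Theorem~2.1]{MR4594405}; your pigeonhole choice of $C_i$ and the step from $\mu(T_m^{-1}B\cap T_n^{-1}B)>0$ to some $x$ with $mx,nx\in C_i$ fill in the details the paper leaves implicit. One small correction: $\sum_i|C_i\cap\Phi_K|/|\Phi_K|=1$ holds simply because the $C_i$ partition $\N$, whereas the F\o{}lner property is what the correspondence principle uses to make the limiting measure invariant.
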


For $f_1,\ldots,f_r\in\M,$ applying the criterion to the rotation
\begin{align*}
T_n(z_1,\ldots,z_r):=(f_1(n)z_1,\ldots,f_r(n)z_r)
\end{align*}
on $\T^r$ with Haar probability measure and a small neighbourhood of the identity gives the following simultaneous approximation.

\begin{corollary}\label{cor:heil}
Let $R\subseteq\N$ be a multiplicative pseudorandom set. For every
$r \in \N$ and every $f_1,\ldots,f_r\in\M$,
we have
\begin{align*}
 \liminf_{\substack{n\to\infty\\n\in R}}
 \max_{1\leq j\leq r}|f_j(n)-1|=0.
\end{align*}
\end{corollary}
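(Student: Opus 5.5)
We derive this from Theorem~\ref{thm:main} by applying it to the rotation on $\T^r$, after first removing finitely many elements from $R$ so that the recurrence cannot be witnessed only by bounded elements.

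\textbf{Step 1: the system.} Fix $r\in\N$ and $f_1,\ldots,f_r\in\M$. On $X=\T^r$ with Haar probability measure $\mu$, set
\begin{align*}
T_n(z_1,\ldots,z_r):=(f_1(n)z_1,\ldots,f_r(n)z_r).
\end{align*}
Each $T_n$ is a rotation, so it preserves $\mu$. Since the $f_j$ are completely multiplicative with $f_j(1)=1$, we have $T_1=\mathrm{id}$ and $T_{mn}=T_m\circ T_n$. Hence $T$ is a measure-preserving action of $(\N,\times)$.

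\textbf{Step 2: recurrence gives an approximating element.} Fix $\varepsilon>0$ and let $B:=\{z\in\T^r:\max_j|z_j-1|<\varepsilon\}$, which has positive Haar measure. Suppose $R'\subseteq\N$ is multiplicatively pseudorandom. By Theorem~\ref{thm:main} there are distinct $m,n\in\N$ with $m/n\in R'$ and $\mu(T_m^{-1}B\cap T_n^{-1}B)>0$.

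Since $R'\subseteq\N$, we can write $m=kn$ with $k\in R'$ and $k\neq1$. Pick $z$ with $T_mz\in B$ and $T_nz\in B$. Then $|f_j(m)z_j-1|<\varepsilon$ and $|f_j(n)z_j-1|<\varepsilon$ for every $j$. Using $f_j(m)=f_j(k)f_j(n)$ and unimodularity,
\begin{align*}
|f_j(k)-1|=|f_j(m)z_j-f_j(n)z_j|<2\varepsilon \qquad (1\leq j\leq r).
\end{align*}

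\textbf{Step 3: forcing $k$ to be large.} Step 2 alone gives some $k\in R\setminus\{1\}$, but a priori such $k$ could be bounded. To obtain arbitrarily large $k$, apply Step 2 to $R'=R\setminus F$ for an arbitrary finite set $F$.

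We must check that $R\setminus F$ is still multiplicatively pseudorandom. For large $K$ and $Q\in\Phi_K$, the set $(R\setminus F)\cap Q\N$ is still infinite, since $R\cap Q\N$ is. In each average, removing $F$ changes the number of admissible $n\leq N$ by at most $|F|$. Both normalizing counts tend to infinity: this uses that $R\cap Q\N$ is infinite, and, since $1\in\Phi_K$ is not generally assumed, that $R$ itself is infinite. So each average changes by $O(|F|/\#)\to0$ because $|f|=1$. Hence the defining limit in Definition~\ref{def:mpr} still holds for $R\setminus F$.

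It follows that for every $\varepsilon>0$ and every finite $F$ there is $k\in R\setminus(F\cup\{1\})$ with $\max_j|f_j(k)-1|<2\varepsilon$. Therefore infinitely many $k\in R$ satisfy this bound, which gives the stated $\liminf$.

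\textbf{Main obstacle.} The only point needing care is the stability of pseudorandomness under removing a finite set. This rests on the normalizing counts diverging, which follows from the infiniteness hypotheses in Definition~\ref{def:mpr}. Everything else is a direct application of Theorem~\ref{thm:main}.
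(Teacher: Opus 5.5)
Your proposal is correct and follows the paper's own deduction: you apply Theorem~\ref{thm:main} to the rotation $T_n(z)=(f_1(n)z_1,\ldots,f_r(n)z_r)$ on $\T^r$ with a small neighbourhood $B$ of the identity, and then turn $\mu(T_m^{-1}B\cap T_n^{-1}B)>0$ into $\max_j|f_j(m/n)-1|<2\varepsilon$. The paper leaves implicit why the witnesses can be taken arbitrarily large; your Step~3, which shows that multiplicative pseudorandomness survives removing finitely many elements, supplies this correctly, and the first assertion of Theorem~\ref{thm:suff} would also do it directly, since it says $\mathcal T_R(\mu,T,B;\varepsilon')$ is infinite for any $0<\varepsilon'<\mu(B)^2$.
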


The following Khintchine-type refinement strengthens
Theorem~\ref{thm:main} and gives positive relative lower density under
two natural counting hypotheses. For an infinite set $R\subseteq\N$, write
\begin{align*}
 R(x)&:=|R\cap[1,x]|,\\
 \underline d_R(S)&:=\liminf_{x\to\infty}
 \frac{|S\cap R\cap[1,x]|}{R(x)}.
\end{align*}

\begin{theorem}\label{thm:suff}
Let $R\subseteq\N$ be a multiplicative pseudorandom set, and let
$T=(T_n)_{n\in\N}$ be a measure-preserving multiplicative action on a
probability space $(X,\mathcal B,\mu)$. For every measurable set
$B\in\mathcal B$ of positive measure and every $\varepsilon>0$, the set
\begin{align*}
 \mathcal T_R(\mu,T,B;\varepsilon)
 :=\{n\in R:\mu(T_n^{-1}B\cap B)>\mu(B)^2-\varepsilon\}
\end{align*}
is infinite. Suppose in addition that, for every $Q\in\N$ and $C\geq1$, we have
\begin{gather}
 |R\cap Q\N\cap[1,x]|\gg_{R,Q}R(x),\label{eq:nolocal}\\
 R(x/C)\gg_{R,C}R(x)\label{eq:regular}
\end{gather}
as $x\to\infty$. Then
$\underline d_R(\mathcal T_R(\mu,T,B;\varepsilon))>0$.
\end{theorem}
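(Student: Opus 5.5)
We follow the Frantzikinakis--Klurman--Moreira strategy and prove everything through spectral measures. Let $F=\mathbf 1_B-\mu(B)$ and $G=\mathbf 1_B$. The correlation $n\mapsto \mu(T_n^{-1}B\cap B)=\langle G\circ T_n, G\rangle$ is a positive definite function on the group generated by $(\N,\times)$. By Bochner's theorem on the dual group, which is the compact group of completely multiplicative unimodular functions extended to $\Q_{>0}$, we can write
\begin{align*}
 \mu(T_n^{-1}B\cap B)=\int_{\M} f(n)\,d\sigma(f),
\end{align*}
with $\sigma$ a positive finite measure and $\sigma(\{\mathbf 1\})\geq\mu(B)^2$. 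The atom at the trivial character is at least the squared norm of the projection of $\mathbf 1_B$ onto the invariant functions, and that projection has norm at least $|\langle \mathbf 1_B,1\rangle|=\mu(B)$. This yields the decomposition
\begin{align*}
 \mu(T_n^{-1}B\cap B)\geq \mu(B)^2+\int_{\M\setminus\{1\}} f(n)\,d\sigma(f).
\end{align*}
It therefore suffices to show that, along suitable averages over $n\in R$, the real part of the remaining integral is $>-\varepsilon$.

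The first step is to use the Følner sequence $\Phi_K$ and pseudorandomness to kill the non-pretentious part of $\sigma$. For $K$ large we average over $Q\in\Phi_K$ and then over $n\le N$ with $Qn\in R$. Pseudorandomness lets us replace $\E_{n\le N,\,Qn\in R} f(Qn)$ by $f(Q)\,\E_{n\le N,\,n\in R}f(n)+o(1)$. We sum over $Q\in\Phi_K$, take $N\to\infty$ (dominated convergence), and swap with the integral. This bounds the quantity
\begin{align*}
 A_K:=\E_{Q\in\Phi_K}\limsup_{N}\E_{\substack{n\le N\\ Qn\in R}}\mu(T_{Qn}^{-1}B\cap B)
\end{align*}
from below, in the liminf sense, by $\mu(B)^2+\int \E_{Q\in\Phi_K} f(Q)\,c_N(f)\,d\sigma$, where $|c_N(f)|\le1$. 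As $K\to\infty$, $\E_{Q\in\Phi_K}f(Q)\to \mathbf 1_{f=1}$ pointwise for each $f\in\M$. Indeed, $\Phi_K$ is a product of intervals in the exponents, so the average factorizes as $\prod_{p\le K}\E_{K<v\le 2K}f(p)^v$. This product tends to $0$ unless $f(p)=1$ for all $p$ beyond those dealt with by letting $K$ vary; making this precise is exactly the point of the Følner property. Dominated convergence then makes the nontrivial part vanish, leaving at least $\mu(B)^2-\varepsilon/2$.

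The genuine subtlety, and the step I expect to be the main obstacle, is that the limits are not uniform. Pseudorandomness is only pointwise in $f$, and the average over $n\in R$ need not converge. I would handle this by fixing $K$ first, then taking a subsequence $N_j$ along which all relevant averages converge weakly for $\sigma$-almost every $f$. The pointwise $o(1)$ errors are removed by dominated convergence over $\sigma$. The outcome is that for each large $K$ some $Q\in\Phi_K$ has
\begin{align*}
 \limsup_N \E_{\substack{n\le N\\ Qn\in R}}\mu(T_{Qn}^{-1}B\cap B)>\mu(B)^2-\varepsilon.
\end{align*}
Hence $\mathcal T_R$ contains infinitely many elements of $R\cap Q\N$. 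Infinitude overall comes from letting $K\to\infty$, since otherwise a finite set would carry positive proportion of the infinite sets $R\cap Q\N$, which is absurd.

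For the density statement we run the same argument with liminf. Because $\mu(T_m^{-1}B\cap B)\le\mu(B)$, a lower bound on the average over $\{n\le N: Qn\in R\}$, with the average exceeding $\mu(B)^2-\varepsilon$, forces a positive proportion $\gg\varepsilon$ of those $m=Qn\le QN$ to lie in $\mathcal T_R(\mu,T,B;2\varepsilon)$. Condition \eqref{eq:nolocal} says $|R\cap Q\N\cap[1,QN]|\gg R(QN)$, and \eqref{eq:regular} with $C=Q$ compares $R(QN)$ with $R(x)$ for $x\in[QN,Q(N+1)]$. Together these give $\underline d_R>0$ after replacing $\varepsilon$ by $\varepsilon/2$. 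To secure the liminf version we use the Følner averaging identity for every $N$ rather than along a subsequence. The cross term $\int(\E_Q f(Q))\,\E_{n}f(n)\,d\sigma$ is small uniformly in $N$ once $K$ is large, since $|\E_n f(n)|\le1$ and $\E_Q f(Q)\to0$ in $L^1(\sigma)$ off the trivial character.
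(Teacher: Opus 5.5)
Your framework matches the paper's. You use the spectral measure with $\sigma(\{1\})\geq\mu(B)^2$; your projection argument works, though centering $1_B$ is quicker. You replace $\E_{n\leq N,\,Qn\in R}f(Qn)$ by $f(Q)\E_{n\leq N,\,n\in R}f(n)+o(1)$ for the finitely many $Q\in\Phi_K$. You then use the $N$-uniform bound $\int_{\M\setminus\{1\}}|\E_{Q\in\Phi_K}f(Q)|\,d\sigma(f)\to0$, which makes your subsequence detour unnecessary. Your justification of $\E_{Q\in\Phi_K}f(Q)\to0$ for $f\not\equiv1$ is garbled. A single prime $p$ with $f(p)\neq1$ suffices, since its factor $\E_{K<v\leq 2K}f(p)^v$ is $O(1/(K|1-f(p)|))$ and the other factors have modulus at most $1$. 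The infinitude part is correct, because $\E_Q\limsup_N\geq\limsup_N\E_Q$ does give one fixed $Q$ with a $\limsup$ bound.

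The gap is in the density statement. \emph{Running the same argument with liminf} cannot produce a single $Q\in\Phi_K$ with $\liminf_N\E_{n\leq N,\,Qn\in R}\mu(T_{Qn}^{-1}B\cap B)>\mu(B)^2-\varepsilon$. The inequality $\E_Q\liminf_N\leq\liminf_N\E_Q$ goes the wrong way, and the good $Q$ may change with $N$. What the averaged bound actually gives, for fixed large $K$ and every large $N$, is some $Q_N\in\Phi_K$ whose inner average is large. Your choice of $N$ from $x$ via $x\in[QN,Q(N+1)]$ then becomes circular, since $Q$ depends on $N$. The paper fixes the scale independently of $Q$: put $L:=\max\Phi_K$, $N:=\lfloor x/L\rfloor$, and then $Q:=Q_N$. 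This gives $x/(2L)\leq N\leq QN\leq x$, hence $|\mathcal T_R(\mu,T,B;\varepsilon)\cap[1,x]|\geq\frac{\varepsilon}{2}|R\cap Q\N\cap[1,QN]|\gg R(QN)\geq R(x/(2L))\gg R(x)$. Here \eqref{eq:nolocal} is used with constants made uniform over the finite set $\Phi_K$, and \eqref{eq:regular} is used with $C=2L$ rather than $C=Q$.
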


The condition \eqref{eq:nolocal} requires a positive proportion of the
set to lie in each fixed divisibility class. The condition
\eqref{eq:regular} prevents its counting function from increasing too
rapidly under a fixed dilation. 

In the additive setting, Khintchine's recurrence theorem
\cite{MR1556883} gives a \textit{syndetic} set of return times at the threshold
$\mu(B)^2-\varepsilon$.\footnote{A subset of $\N$ is \textit{syndetic} if it has bounded gaps.}
The conclusion of Theorem~\ref{thm:suff} is instead expressed by relative
lower density. Nor can it be replaced by syndeticity after parametrizing
a Piatetski--Shapiro set by its defining index, even though it is
multiplicatively recurrent for $1<c<6/5$ by Theorem~\ref{thm:ps}.
Indeed, for $t\ne0$, the
multiplicative rotation $T_mz=m^{it}z$ on $\T$ and a sufficiently short
arc $B$ give arbitrarily long intervals of integers $n$ for which
$T_{\lfloor n^c\rfloor}^{-1}B\cap B$ is empty.

We apply this criterion to two classes of sets for which the
multiplicative structure is less apparent. The first consists of integer parts of
fractional powers.

\begin{definition}[Piatetski--Shapiro set]
For a non-integral $c>1$, the set
\begin{align*}
 \mathcal P_c:=\{\lfloor n^c\rfloor:n\in\N\}
\end{align*}
is called a \textit{Piatetski--Shapiro set}.
\end{definition}

The second class is defined using nilmanifolds. Throughout the paper,
a \textit{nilmanifold} is a compact quotient $X=G/\Gamma$ of a connected,
simply connected nilpotent Lie group by a discrete cocompact subgroup.
We write $m_X$ for its Haar probability measure. \textit{Polynomial sequences}
$g:\Z\to G$ are understood with respect to a \textit{rational filtration},
as in \cite[Section~5.1]{MR3556289}. Equivalently, for a filtration
$(G_j)_{j=0}^{s+1}$ of degree $s$, they admit the expansion
\begin{align*}
 g(n)=g_0g_1^{\binom n1}\cdots g_s^{\binom ns},
 \qquad g_j\in G_j\qquad(0\leq j\leq s)
\end{align*}
given in \cite[equation~(5.1)]{MR3556289}. See also
\cite[Lemma~6.7]{MR2877065}.
The orbit $(g(n)\Gamma)_{n\in\Z}$ is \textit{totally equidistributed}
if, for every positive integer $a$, every integer $b$ and every
continuous function $F:X\to\C$, we have
\begin{align*}
 \lim_{N\to\infty}\E_{n\leq N}F(g(an+b)\Gamma)
 =\int_XF\,dm_X.
\end{align*}
Nil-Bohr sets were introduced by Host--Kra \cite{MR2755924}.
For their formulation using polynomial nilsequences, see
Shao--Ter\"av\"ainen \cite[Definition~2.2]{shao2021bombieri}.
We use the following regular irrational variant.

\begin{definition}[Regular irrational nil-Bohr set]
Let $X=G/\Gamma$ and $g:\Z\to G$ be as above, and suppose that the
orbit is totally equidistributed. For a Borel set $A\subseteq X$
satisfying $m_X(A)>0$ and $m_X(\partial A)=0$, we call
\begin{align*}
 \mathcal B_{X,g}(A):=\{n\in\N:g(n)\Gamma\in A\}
\end{align*}
a \textit{regular irrational nil-Bohr set}.
\end{definition}

The adjective \textit{regular} refers to the boundary condition, while \textit{irrational}
refers to total equidistribution, in keeping with the terminology of
\cite[Theorem~2.2]{MR3556289}.

For the Piatetski--Shapiro application, we first restrict to smooth
integers in order to obtain a bilinear factorization.

\begin{definition}[$\delta$-smooth numbers]
Given $\delta>0$, we put
\begin{align*}
 \mathcal S_\delta:=\{n\in\N:P^+(n)\leq n^\delta\},
\end{align*}
where $P^+(n)$ is the largest prime factor of $n$ and $P^+(1):=1$.
The elements of this set are called \textit{$\delta$-smooth numbers}.
\end{definition}

Heath--Brown's bilinear estimate \cite{MR698168}, together with a
factorization of smooth integers, gives the first application.

\begin{theorem}\label{thm:ps}
Let $1<c<6/5$. There exists $\delta=\delta(c)>0$ such that the intersection
$\mathcal P_c\cap\mathcal S_\delta$ is multiplicatively pseudorandom.\footnote{Adapting the argument in
\cite[Section~2]{MR3705270}, one can show that without the smoothness
restriction, Theorem~\ref{thm:ps} is false in general.}
In particular, the set $\mathcal P_c$ is multiplicatively
recurrent.
\end{theorem}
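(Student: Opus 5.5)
Since every subset of a multiplicatively recurrent set is again recurrent with the same witnesses $m,n$, the second assertion follows from the first by Theorem~\ref{thm:main}. So the task is to show that $R:=\mathcal P_c\cap\mathcal S_\delta$ is multiplicatively pseudorandom for a suitable $\delta=\delta(c)>0$.

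The first step is to detect membership in $\mathcal P_c$ by the standard identity
\begin{align*}
\mathbf 1_{\mathcal P_c}(m)=\lfloor -m^{\gamma}\rfloor-\lfloor-(m+1)^{\gamma}\rfloor,\qquad \gamma:=1/c\in(5/6,1).
\end{align*}
Writing $\lfloor y\rfloor=y-\tfrac12-\psi(y)$, this becomes
\begin{align*}
\mathbf 1_{\mathcal P_c}(m)=\gamma m^{\gamma-1}+O(m^{\gamma-2})+\psi(-(m+1)^\gamma)-\psi(-m^\gamma).
\end{align*}
For fixed $Q\in\Phi_K$ and $f\in\M$, consider the sum $\sum_{m\leq x,\,Q\mid m}f(m/Q)\mathbf 1_{\mathcal S_\delta}(m)\mathbf 1_{\mathcal P_c}(m)$, and its analogue with $Q=1$. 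The main term gives the weighted sums $\gamma\sum f(m/Q)\mathbf 1_{\mathcal S_\delta}(m)m^{\gamma-1}$. After the change of variables $m=Qn$, this is $Q^{\gamma-1}$ times a smooth-number sum of $f(n)n^{\gamma-1}\mathbf 1_{\mathcal S_\delta}(Qn)$. Since $Q$ is fixed, $\mathbf 1_{\mathcal S_\delta}(Qn)=\mathbf 1_{\mathcal S_\delta}(n)$ except on a set of density zero among smooth numbers. Dividing by the counts (obtained from the same expansion with $f=1$, which also shows $R\cap Q\N$ is infinite), the normalized main terms for $Q$ and for $1$ agree up to $o(1)$ by partial summation. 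The count is of order $x^{\gamma}\rho(1/\delta)$, with $\rho$ the Dickman function.

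It therefore suffices to show that the error term
\begin{align*}
\sum_{\substack{m\leq x\\ Q\mid m}}f(m/Q)\,\mathbf 1_{\mathcal S_\delta}(m)\bigl(\psi(-(m+1)^\gamma)-\psi(-m^\gamma)\bigr)=o(x^\gamma)
\end{align*}
holds uniformly in $f\in\M$. The plan is to expand $\psi$ in a truncated Fourier series (Vaaler) with $H\approx x^{1-\gamma+\eta}$. The difference of the two exponentials is handled via $e(h(m+1)^\gamma)-e(hm^\gamma)=e(hm^\gamma)\phi_h(m)$, where $\phi_h$ is a smooth factor removable by partial summation. This reduces the problem to bounding
\begin{align*}
\sum_{h\leq H}\frac{1}{h}\min\Bigl(1,\frac{h}{x^{1-\gamma}}\Bigr)\Bigl|\sum_{m\sim M,\,Q\mid m}f(m/Q)\mathbf 1_{\mathcal S_\delta}(m)e(hm^\gamma)\Bigr|
\end{align*}
by a saving of $x^{-\eta}$ over $x^{\gamma}$.

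The main obstacle is this inner sum, since $f$ is an arbitrary unimodular multiplicative function and nothing about it can be used. Here the smoothness restriction comes in. Every $\delta$-smooth $m\sim M$ factors as $m=uv$ with $u\in[M^{\alpha},M^{\alpha+\delta}]$ for any prescribed $\alpha$: greedily multiply primes in increasing order. This factorization is made unique, for instance by taking $u$ to be the least such divisor built from the smallest primes. Complete multiplicativity then gives $f(uv)=f(u)f(v)$. The resulting coupling conditions (the smoothness of $uv$ and the uniqueness constraint) are separated by standard means: Perron or a dyadic decomposition in the largest prime of $u$, at a cost of $x^{o(1)}$. This produces Type~II sums
\begin{align*}
\sum_{u\sim U}\sum_{v\sim V}a_u b_v\,e(h(uv)^\gamma),\qquad |a_u|,|b_v|\leq1,
\end{align*}
with $U$ in a range we may choose. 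Heath-Brown's bilinear estimate \cite{MR698168}, obtained by Cauchy--Schwarz and van der Corput, bounds these nontrivially in an admissible range of $U$ exactly when $\gamma>5/6$, that is $c<6/5$. Choosing $\alpha$ inside that range and $\delta$ smaller than its width, then summing over $h$, gives the required power saving. The bound is uniform in $f$ and in $Q$, with the factor $Q$ absorbed into $u$. I expect the delicate part to be verifying that Heath-Brown's admissible range is wide enough to contain an interval of length $\delta$ for all $H$ up to $x^{1-\gamma+\eta}$, and handling the unique-factorization bookkeeping without losing more than $x^{o(1)}$.
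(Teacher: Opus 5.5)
Your proposal is correct and follows essentially the same route as the paper: the floor-function indicator, a truncated Fourier expansion of $\psi$, a factorization of smooth integers into Type~II form, and Heath-Brown's bilinear estimate on the range $Z^{1-\gamma}<d<Z^{5\gamma-4}$, which is non-empty exactly when $c<6/5$. The differences are only bookkeeping, plus one wording slip:
\begin{itemize}
\item The paper orders primes decreasingly, so that $P^+(n)=P^+(d)$. Smoothness then becomes an interval condition $\mathcal I_d$ that depends only on the inner variable.
\item The paper decouples $P^+(\ell)\le P^-(d)$ by summing over the prime $p=P^-(d)$, rather than by Perron. This costs a factor $Z^{\delta}$, which is absorbed into the saving $Z^{-\eta/7}$.
\item Your opening sentence should say that every \emph{superset}, not every subset, of a multiplicatively recurrent set is recurrent with the same witnesses. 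That is the direction you actually use.
\end{itemize}
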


For nil-Bohr sets, the required estimate follows from the uniform
orthogonality theorem of Frantzikinakis--Host
\cite[Theorem~2.2]{MR3556289}.

\begin{theorem}\label{thm:nilbohr}
Every regular irrational nil-Bohr set is multiplicatively pseudorandom,
and hence is a set of multiplicative recurrence.
\end{theorem}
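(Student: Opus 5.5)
We must show that $R=\mathcal B_{X,g}(A)$ is multiplicatively pseudorandom; the recurrence statement then follows from Theorem~\ref{thm:main}. Fix $K$ large and $Q\in\Phi_K$.

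The plan is to compare both averages in Definition~\ref{def:mpr} with the unrestricted average $\E_{n\leq N}f(n)$ along suitable residue classes, using the Frantzikinakis--Host uniform orthogonality theorem \cite[Theorem~2.2]{MR3556289}. That theorem says that for a totally equidistributed polynomial orbit and continuous $F$ with $\int_XF\,dm_X=0$,
\begin{align*}
\lim_{N\to\infty}\sup_{f\in\M}\Bigl|\E_{n\leq N}f(n)F(g(n)\Gamma)\Bigr|=0.
\end{align*}

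\textbf{Step 1: density.} Since $g(Qn)$ is again a polynomial sequence, total equidistribution and $m_X(\partial A)=0$ give, by approximating $1_A$ from above and below by continuous functions,
\begin{align*}
|\{n\leq N:Qn\in R\}|\sim m_X(A)N, \qquad R(N)\sim m_X(A)N.
\end{align*}
In particular $R\cap Q\N$ is infinite.

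\textbf{Step 2: the average over $R$.} Write $1_A=m_X(A)+(1_A-m_X(A))$ and sandwich $1_A$ between continuous functions $F_\pm$ with $\int(F_+-F_-)\,dm_X<\eta$. Applying orthogonality to $F_\pm-\int F_\pm$ yields
\begin{align*}
\E_{\substack{n\leq N\\ n\in R}}f(n)=\E_{n\leq N}f(n)+o(1)+O(\eta/m_X(A)).
\end{align*}
One needs care here: sandwiching applies to nonnegative weights, not complex $f$. Instead, bound $|\E_{n\leq N}f(n)(1_A-F_-)(g(n)\Gamma)|\leq\E_{n\leq N}(F_+-F_-)(g(n)\Gamma)\to\int(F_+-F_-)<\eta$. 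Then handle $F_-$ by orthogonality.

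\textbf{Step 3: the average over $Qn\in R$.} Apply the same argument to the sequence $h(n):=g(Qn)$, which is again a polynomial sequence with a totally equidistributed orbit in $X$, since the progressions $a(Qn)+b$ are among those covered by total equidistribution. The orthogonality theorem should apply to $h$ as well. This gives
\begin{align*}
\E_{\substack{n\leq N\\ Qn\in R}}f(n)=\E_{n\leq N}f(n)+o(1)+O(\eta).
\end{align*}
Subtracting this from Step 2 and letting $\eta\to0$ gives the condition in Definition~\ref{def:mpr}. The result is in fact uniform in $f$.

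\textbf{Main obstacle.} The delicate point is verifying the hypotheses of \cite[Theorem~2.2]{MR3556289} for $h(n)=g(Qn)$. That theorem requires the sequence to be adapted to a rational filtration and the orbit to be totally equidistributed. If its formulation instead handles orthogonality along arithmetic progressions ($n\equiv0\bmod Q$) directly, I would use that version, writing $1_{Q\mid m}$ via additive characters $e(jm/Q)$. These characters can be absorbed into the nilsequence as a product nilmanifold $X\times\Z/Q\Z$, or handled by replacing $f$ with $f\cdot e(j\cdot/Q)$ and appealing to the theorem's uniformity over such twists. I would first check the exact statement, which I believe includes uniformity over residue classes.
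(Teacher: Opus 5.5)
Your proposal is correct and follows essentially the same route as the paper. The paper likewise observes that $g(Qn)$ is totally equidistributed because $g(Q(an+b))=g(Qan+Qb)$, applies the Frantzikinakis--Host orthogonality theorem to $1_A-m_X(A)$ for each fixed $Q$ (including $Q=1$), and divides by the density obtained from $f\equiv1$; the only difference is that you do the continuous sandwiching by hand, while the paper cites the Riemann-integrable extension in Remark~(3) of Frantzikinakis--Host. Your ``main obstacle'' is not a genuine one, since polynomial sequences for a rational filtration are closed under $n\mapsto Qn$ (the paper takes this for granted), so the additive-character fallback is unnecessary.
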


A \textit{generalized polynomial} is a function obtained from ordinary real
polynomials by finitely many applications of addition, multiplication,
and the floor function, as in \cite{MR1247389,MR2318563}. The representation theorem of
Bergelson--Leibman \cite[Theorem~A$^*$]{MR2318563} realizes bounded
generalized polynomials as coordinate functions of polynomial orbits
on nilmanifolds. Together with the factorization theorem for polynomial
orbits \cite[Corollary~1.12]{MR2877065}, this gives the following
consequence.

\begin{corollary}\label{cor:genpoly}
Let $p:\Z\to\R$ be a generalized polynomial such that
$(p(an+b))_{n\in \N}$ is equidistributed modulo one for every
$a \in \N$ and every $b \in \Z$. For an interval
$I \subseteq [0,1)$ of positive length, the set
\begin{align*}
 \mathcal{L}_{p}(I):=\{n\in\N: \{p(n)\}\in I\}
\end{align*}
is multiplicatively pseudorandom, and hence is a set of multiplicative
recurrence.
\end{corollary}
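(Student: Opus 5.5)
Throughout, $\{x\}$ denotes the fractional part. The plan is to realize $\mathcal L_p(I)$ as a regular irrational nil-Bohr set and then invoke Theorem~\ref{thm:nilbohr}.

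\emph{Step 1: Bergelson--Leibman representation.} Since $\{p(n)\}$ is a bounded generalized polynomial, \cite[Theorem~A$^*$]{MR2318563} provides a nilmanifold $Y=H/\Lambda$, a polynomial sequence $h:\Z\to H$ and a piecewise polynomial map $F:Y\to[0,1)$ with $\{p(n)\}=F(h(n)\Lambda)$ for all $n$. The preimage $F^{-1}(I)$ is then a semialgebraic-type set whose boundary is contained in a finite union of lower-dimensional pieces.

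\emph{Step 2: passing to a totally equidistributed orbit.} The orbit $(h(n)\Lambda)$ need not be equidistributed in $Y$. Apply the factorization theorem \cite[Corollary~1.12]{MR2877065} (the qualitative version) to write $h=\epsilon g\gamma$, with $\epsilon$ constant, $\gamma$ rational and periodic with some period $q$, and $g$ totally equidistributed in a subnilmanifold $X=G/\Gamma$. Splitting according to $n\bmod q$ and conjugating by the constant and rational parts, for each residue $b$ we get $\{p(qm+b)\}=F_b(g_b(m)\Gamma)$ with $g_b$ totally equidistributed on $X$. Here is where the hypothesis that $p(an+b)$ is equidistributed mod one enters. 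Since $(p(qm+b))_m$ is equidistributed, the pushforward of $m_X$ under $F_b$ is Lebesgue measure on $[0,1)$. Hence $A_b:=F_b^{-1}(I)$ has $m_X(A_b)=|I|>0$, and $m_X(\partial A_b)\le$ the mass of $F_b^{-1}(\partial I)$ plus the mass of the discontinuity set of $F_b$. The first is $0$ because the pushforward is atomless. The second I would show is null from the piecewise-polynomial structure (a lower-dimensional set). This regularity check is the main obstacle: one must make sure that the discontinuity locus of the Bergelson--Leibman map on the subnilmanifold $X$ is $m_X$-null. I would try to avoid it by noting that the representation can be chosen with $F$ continuous off a finite union of subvarieties, and that total equidistribution forbids positive mass on those sets, since such mass would produce an atom or a non-equidistributed restriction.

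\emph{Step 3: reassembling the residues.} To avoid handling a union over residues, I would absorb the residue into a single nilsystem. Take $X'=X\times\Z/q\Z$ with the polynomial sequence $n\mapsto(g_{n\bmod q}(\lfloor n/q\rfloor),\,n\bmod q)$. Alternatively, and more cleanly, use the product $\tilde G=G\times\R$ over $\Gamma\times q\Z$, which includes the circle rotation by $n/q$. However, total equidistribution fails if rational rotations are included. So instead I would prove the pseudorandomness directly. The condition of Definition~\ref{def:mpr} is linear in the indicator of the set after normalization, and each piece $\{qm+b: g_b(m)\Gamma\in A_b\}$ has asymptotic relative density $|I|$ in its residue class. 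Then $\E_{n\le N,\,Qn\in\mathcal L_p(I)}f(n)$ decomposes over the residues of $Qn$ modulo $q$. Each piece is handled by the Frantzikinakis--Host orthogonality argument from the proof of Theorem~\ref{thm:nilbohr}, applied to the totally equidistributed sequence $m\mapsto g_b(m)$ together with the arithmetic progression $Qn\equiv b\pmod q$. Each piece contributes a main term equal to $|I|$ times the mean of $f$ over the relevant progression, and the main terms match those for $Q=1$ because $p$ is equidistributed along every progression. The remaining point, also part of the obstacle, is checking that the proof of Theorem~\ref{thm:nilbohr} indeed localizes to progressions. I expect this to hold because \cite[Theorem~2.2]{MR3556289} is uniform over progressions.
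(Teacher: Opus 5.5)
Your route is essentially the paper's: the Bergelson--Leibman representation, the factorization $h=\epsilon g\gamma$, splitting into residue classes and conjugating by the rational part to get totally equidistributed orbits on subnilmanifolds, then Frantzikinakis--Host orthogonality along progressions with main term $|I|\sum_{n\leq N}f(n)$. This is Lemma~\ref{lem:gp-short} and the proof of Corollary~\ref{cor:genpoly}.

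The genuine gap is the regularity step you flag yourself, and the argument you sketch for it does not work. Knowing that $F$ is continuous off finitely many lower-dimensional pieces of $Y$ says nothing about $F_b$. The translated subnilmanifold $\epsilon\gamma_bX$ is itself lower-dimensional in $Y$ and can lie entirely inside such a piece. For example, $F(x,y)=\{x\}+\{y\}$ on $\T^2$ is discontinuous at every point of $\{y=0\}$, yet its restriction to that circle is continuous away from $x=0$. Total equidistribution cannot rescue this. It concerns the orbit and gives no control of $m_X(D_b)$, where $D_b$ is the fixed discontinuity set of $F_b$. Moreover, your deduction that $F_b$ pushes $m_X$ forward to Lebesgue measure already needs $\phi\circ F_b$ to be Riemann integrable, i.e.\ $m_X(D_b)=0$. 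So the ``atom'' argument is circular. The missing input is that $F_b$ is piecewise polynomial in the Mal'cev coordinates of $X$ itself \cite[Proposition~2.8]{MR2318563}, and piecewise polynomial maps are Riemann integrable \cite[Section~0.18]{MR2318563}. This is how the paper closes the step. After that, your inclusion $\partial A_b\subseteq D_b\cup F_b^{-1}(\partial I)$ finishes Step~2.

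There is a smaller point in Step~3. The orthogonality theorem needs the polynomial sequence indexed by the argument of $f$. So you should use $n\mapsto\gamma_b^{-1}g(Qn)\gamma_b$ on $\{n:Qn\equiv b\Mod{q}\}$, which is totally equidistributed by \cite[Corollary~5.5]{MR3556289}, rather than $m\mapsto g_b(m)$ with $m=(Qn-b)/q$. The paper avoids both this reindexing and the computation of $\partial A_b$ by applying the representation and factorization directly to $u_Q(n)=1_I(\{p(Qn)\})$, which is itself a bounded generalized polynomial. Your expectation about uniformity over progressions is correct; the paper quotes \cite[Theorem~6.1]{MR3556289} for it.
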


We next record three concrete examples of Corollary~\ref{cor:genpoly},
which are verified in Section~\ref{sec:genpoly}.

\begin{example}[Inhomogeneous Beatty set]\label{ex:beatty}
Let $\alpha>1$ be irrational and $\beta\geq0$. The set
\begin{align*}
 \mathcal B_{\alpha,\beta}
 :=\{\lfloor\alpha n+\beta\rfloor:n\in\N\}
\end{align*}
is multiplicatively pseudorandom and recurrent.
\end{example}

The next example is a \textit{bracket quadratic level set} arising from a polynomial
orbit on the Heisenberg nilmanifold.

\begin{example}[Bracket quadratic level set]\label{ex:bracket}
Suppose that $1,\alpha,\beta$ are linearly independent over $\Q$.
For any interval $I\subseteq[0,1)$ of positive length, the set
\begin{align*}
 \mathcal{Q}_{\alpha,\beta}:=\{n\in\N:\{\beta n\lfloor\alpha n\rfloor\}\in I\}
\end{align*}
is multiplicatively pseudorandom and recurrent.
\end{example}

The generalized-polynomial criterion also includes ordinary polynomial phases.

\begin{example}[Polynomial level set]\label{ex:polynomial}
Let $P\in\R[x]$ satisfy $P-P(0)\notin\Q[x]$, and let
$I\subseteq[0,1)$ be an interval of positive length. Then the set
\begin{align*}
 \mathcal{L}_P(I):=\{n\in\N:\{P(n)\}\in I\}
\end{align*}
is multiplicatively pseudorandom and recurrent.
\end{example}

For both classes in Theorems~\ref{thm:ps} and \ref{thm:nilbohr}, the
counting estimates \eqref{eq:pscond} and \eqref{eq:bscond}, together
with the corresponding counting asymptotics, verify \eqref{eq:nolocal}
and \eqref{eq:regular}. Moreover, the estimate \eqref{eq:pscond2}
shows that the smooth subset in Theorem~\ref{thm:ps} has positive
relative density in $\mathcal P_c$. Consequently, the set
$\mathcal T_{\mathcal P_c}(\mu,T,B;\varepsilon)$ has positive lower
density relative to the full Piatetski--Shapiro set as well.

Finally, Section~\ref{sec:dist} determines the limit points of
$(f(n))_{n\in R}$ for every multiplicative pseudorandom set and every
$f\in\M$. They form the full closed subgroup generated by the values of
$f$. We also characterize equidistribution on the nil-Bohr sets and on
the smooth Piatetski--Shapiro sets in terms of \textit{pretentious distance}.
The final section discusses the local restrictions in our criterion,
and the appendix records the relation between the order of a
multiplicative function and ergodicity of its rotation action.

\medskip
\noindent\textit{Notation.}
We use the standard big $O$ and little $o$ notations, as well as the
Vinogradov notations $\ll,\gg$ and the Hardy notation $\asymp$. The implied
constants depend only on the indicated parameters unless otherwise
specified. For $t\in\R$, we write $e(t):=\exp(2\pi it)$ and
$\psi(t):=\{t\}-\frac12$.
For $n\geq2$, we write $P^+(n)$ and $P^-(n)$ for the largest and smallest
prime factors of $n$, respectively.
For a non-empty finite set $S$ and a function $a:S\to\C$, we write
\begin{align*}
 \E_{n\in S}a(n):=\frac1{|S|}\sum_{n\in S}a(n).
\end{align*}
For $R\subseteq\N$ and $Q\in\N$, we write
$R_Q:=\{n\in\N:Qn\in R\}$.

\section{Proof of Theorem~\ref{thm:suff}}

We use harmonic analysis on the multiplicative group of positive rationals,
following Elliott \cite{MR766558,MR3731311} and the approach of
Frantzikinakis--Klurman--Moreira
\cite{frantzikinakis2023partitionregularitypythagoreanpairs,frantzikinakis2024partitionregularitygeneralizedpythagorean}.
Every $f\in\M$ extends uniquely to $\Q_{>0}$ by setting
$f(r/s):=f(r)\overline{f(s)}$ for $r,s\in\N$. The extension satisfies
$f(rs)=f(r)f(s)$ for all $r,s\in\Q_{>0}$ and is therefore a character of
$(\Q_{>0},\times)$.
Thus the space $\M$, equipped with the topology of pointwise convergence,
is the compact Pontryagin dual of the discrete group
$(\Q_{>0},\times)$.

For the F\o{}lner sets in \eqref{eq:folner}, the following orthogonality relation for multiplicative functions is the basic ingredient of the \textit{$Q$-trick} of
Frantzikinakis--Klurman--Moreira
\cite[Section~4]{frantzikinakis2023partitionregularitypythagoreanpairs}.
See also \cite[Section~6]{frantzikinakis2024partitionregularityhomogeneousquadratics}.

\begin{lemma} \label{lem:mult}
For every $f\in\M$, we have
\[
\lim_{K\to\infty}\E_{Q\in\Phi_K}f(Q)
=\begin{cases}
1&\mbox{{\normalfont if $f\equiv1$,}}\\
0&\mbox{{\normalfont otherwise.}}
\end{cases}
\]
\end{lemma}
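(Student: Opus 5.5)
The plan is to use the product structure of $\Phi_K$. Since $f$ is completely multiplicative, it factors over primes. The set $\Phi_K$ is a Cartesian product over primes $p\leq K$ of the exponent ranges $v_p\in(K,2K]$. Hence the average factors exactly:
\begin{align*}
\E_{Q\in\Phi_K}f(Q)=\prod_{p\leq K}\Bigl(\frac1K\sum_{v=K+1}^{2K}f(p)^{v}\Bigr).
\end{align*}
Each factor has modulus at most $1$, so it is enough to show that a single factor tends to $0$ when $f\not\equiv1$. If $f\equiv1$, every factor equals $1$ and the limit is trivially $1$.

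Now suppose $f\not\equiv 1$. Then there is a prime $p_0$ with $z:=f(p_0)\neq1$, because a completely multiplicative function equal to $1$ at all primes is identically $1$. For $K\geq p_0$ the factor at $p_0$ appears in the product, and summing the geometric series gives
\begin{align*}
\Bigl|\frac1K\sum_{v=K+1}^{2K}z^{v}\Bigr|=\frac1K\Bigl|\frac{z^{K+1}(z^{K}-1)}{z-1}\Bigr|\leq\frac{2}{K|z-1|}.
\end{align*}
This tends to $0$ as $K\to\infty$, since $z$ is a fixed unit complex number different from $1$. Bounding every other factor by $1$ then gives $|\E_{Q\in\Phi_K}f(Q)|\leq 2/(K|f(p_0)-1|)\to0$, which proves the claim.

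There is no real obstacle here. The only points to check are that the factorization is exact, which holds because the map $(v_p)_{p\leq K}\mapsto\prod p^{v_p}$ is injective by unique factorization, so $\Phi_K$ is genuinely a product set, and that $p_0\leq K$ eventually. The convergence is not uniform in $f$, since it depends on $|f(p_0)-1|$, but the statement is pointwise in $f$, so this does not matter.
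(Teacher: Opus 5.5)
Your proof is correct. Injectivity of $(v_p)_{p\leq K}\mapsto\prod_{p\leq K}p^{v_p}$ gives the exact factorization $\E_{Q\in\Phi_K}f(Q)=\prod_{p\leq K}\frac1K\sum_{K<v\leq2K}f(p)^v$, and the geometric-series bound $2/(K|f(p_0)-1|)$ at a prime with $f(p_0)\neq1$ then finishes the argument.

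The paper gives no proof of its own and only cites Frantzikinakis--Klurman--Moreira (Lemma~3.2). Your argument is presumably the standard proof of that cited lemma, so it is essentially the same approach made self-contained. Your remark that the convergence is only pointwise in $f$ is also accurate, and pointwise convergence is all the later dominated-convergence step in the spectral proposition needs.
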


\begin{proof}
This is \cite[Lemma 3.2]{frantzikinakis2023partitionregularitypythagoreanpairs}.
\end{proof}

We next combine this identity with multiplicative pseudorandomness.
An immediate obstacle is that means of multiplicative functions need not exist,
as the example $f(n)=n^{it}$ with $t\ne0$ already shows on $\N$. Nevertheless,
our definition requires only the difference of the two means to tend
to zero, which is sufficient for the following spectral identity.

\begin{proposition} \label{prop}
Let $R$ be a multiplicatively pseudorandom set and let $\sigma$ be a finite
positive Borel measure on $\M$. Then
\begin{equation*}
\lim_{K\to\infty}\limsup_{N\to\infty}
\left|
\E_{Q\in\Phi_K}
\E_{\substack{n\leq N\\Qn\in R}}
\int_{\M}f(Qn)\,d\sigma(f)-\sigma(\{1\})
\right|=0.
\end{equation*}
\end{proposition}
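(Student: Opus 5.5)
The plan is to expand $f(Qn)=f(Q)f(n)$, use multiplicative pseudorandomness to replace the $Q$-dependent average over $n$ by the $Q$-independent average over $n\in R$, and then apply Lemma~\ref{lem:mult} to the remaining average of $f(Q)$ over $\Phi_K$.

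\textbf{Decoupling.} Fix $K$ large enough that Definition~\ref{def:mpr} applies to every $Q\in\Phi_K$; since $\Phi_K$ is finite, the inner averages are well defined for all large $N$. Set
\[
A_N(f):=\E_{\substack{n\leq N\\ n\in R}}f(n),\qquad
A_{N,Q}(f):=\E_{\substack{n\leq N\\ Qn\in R}}f(n).
\]
By complete multiplicativity and Fubini (everything is bounded and $\sigma$ is finite),
\[
\E_{Q\in\Phi_K}\E_{\substack{n\leq N\\Qn\in R}}\int_{\M}f(Qn)\,d\sigma(f)
=\int_{\M}\E_{Q\in\Phi_K}f(Q)\,A_{N,Q}(f)\,d\sigma(f).
\]
Replacing $A_{N,Q}(f)$ by $A_N(f)$ changes this by at most
\[
\E_{Q\in\Phi_K}\int_{\M}|A_{N,Q}(f)-A_N(f)|\,d\sigma(f).
\]
For each fixed $Q$ and $f$ the integrand tends to $0$ as $N\to\infty$ by multiplicative pseudorandomness, and it is bounded by $2$. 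Dominated convergence with respect to $\sigma$ therefore shows this error tends to $0$ as $N\to\infty$ for each fixed $K$.

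Here $N$ runs through the integers, and the convergence is pointwise in $f$. So the relevant form of dominated convergence is the one for the functions $f\mapsto |A_{N,Q}(f)-A_N(f)|$, applied along an arbitrary sequence $N_j\to\infty$ realising the $\limsup$. Each such function is a finite average of continuous functions on $\M$, hence measurable. No uniformity in $f$ is needed.

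\textbf{Applying Lemma~\ref{lem:mult}.} Since $|A_N(f)|\leq 1$,
\[
\limsup_{N\to\infty}\Bigl|\int_{\M}\E_{Q\in\Phi_K}f(Q)\,A_N(f)\,d\sigma(f)-\sigma(\{1\})\Bigr|
\leq \int_{\M}\bigl|\E_{Q\in\Phi_K}f(Q)-\mathbf 1_{f\equiv 1}\bigr|\,d\sigma(f).
\]
This uses $A_N(1)=1$, so the atom at $f\equiv1$ contributes exactly $\sigma(\{1\})$. The bound is uniform in $N$.

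The integrand on the right tends to $0$ pointwise as $K\to\infty$ by Lemma~\ref{lem:mult}, and it is bounded by $2$. A second application of dominated convergence, now in $K$, gives the claim.

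\textbf{Main point of care.} The only subtle step is the first interchange, because the means $A_N(f)$ themselves need not converge. This is why I keep $A_N(f)$ inside the integral, bound it only by $1$, and never take a limit of it. The pseudorandomness hypothesis is used solely through the pointwise vanishing of the differences $A_{N,Q}(f)-A_N(f)$, which dominated convergence handles because $\Phi_K$ is finite for each fixed $K$.
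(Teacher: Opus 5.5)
Your proposal is correct and follows the paper's proof essentially step for step. You factor $f(Qn)=f(Q)f(n)$, then for fixed $K$ use pseudorandomness and dominated convergence over the finitely many $Q\in\Phi_K$ to replace the $Q$-dependent mean by the mean over $R$. Finally you isolate the atom at $f\equiv1$ using $|A_N(f)|\leq 1$, and apply Lemma~\ref{lem:mult} with a second dominated convergence in $K$.
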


\begin{proof}
Write
\begin{align*}
A_Q(N;f)&:=\E_{\substack{n\leq N\\Qn\in R}}f(n),\\
A_1(N;f)&:=\E_{\substack{n\leq N\\n\in R}}f(n).
\end{align*}
Fix a sufficiently large $K$. For each $Q\in\Phi_K$ and $f\in\M$,
the definition gives $A_Q(N;f)-A_1(N;f)\to0$ as $N\to\infty$.
The integrands are bounded by two, and since $\Phi_K$ is finite, dominated
convergence gives
\begin{gather*}
\E_{Q\in\Phi_K}
\E_{\substack{n\leq N\\Qn\in R}}
\int_{\M}f(Qn)\,d\sigma(f)\\
=
\int_{\M}\left(\E_{Q\in\Phi_K}f(Q)\right)A_1(N;f)\,d\sigma(f)
+o_{K,\sigma}(1).
\end{gather*}
The contribution of $f\equiv1$ is $\sigma(\{1\})$, while the absolute value
of the remaining integral is at most
\[
\int_{\M\setminus\{1\}}\left|\E_{Q\in\Phi_K}f(Q)\right|\,d\sigma(f).
\]
Lemma~\ref{lem:mult} and dominated convergence show that this expression
tends to zero as $K\to\infty$, proving the proposition.
\end{proof}

We use the standard spectral representation for multiplicative actions.

\begin{lemma} \label{lem:spectral}
Let $(T_n)_{n\in\N}$ be a measure-preserving action of
$(\N,\times)$ on $(X,\mathcal B,\mu)$, and let $B\in\mathcal B$.
There exists a finite positive Borel measure $\sigma$ on $\M$ such that for every $n \in \N,$ we have $\sigma(\{1\})\geq\mu(B)^2$ and
\begin{align*}
\mu(T_n^{-1}B\cap B)=\int_{\M}f(n)\,d\sigma(f).
\end{align*}
\end{lemma}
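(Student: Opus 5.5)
The plan is to build the spectral measure in the standard way, by extending the action to a unitary representation of the group $(\Q_{>0},\times)$ and then applying Bochner's theorem. Let $U_n h:=h\circ T_n$ on $L^2(\mu)$. Each $U_n$ is an isometry, and $U_mU_n=U_{mn}$ because $T_{mn}=T_m\circ T_n$. Put $\phi(n):=\langle U_n 1_B,1_B\rangle=\mu(T_n^{-1}B\cap B)$.

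\textbf{Step 1: $\phi$ extends to a positive definite function on $\Q_{>0}$.} Set $\phi(r/s):=\langle U_r1_B,U_s1_B\rangle$ for $r,s\in\N$. This is well defined: if $r/s=r'/s'$, then $rs'=r's$, and the isometry property gives
\[
\langle U_r1_B,U_s1_B\rangle=\langle U_{s'}U_r1_B,U_{s'}U_s1_B\rangle=\langle U_{rs'}1_B,U_{ss'}1_B\rangle,
\]
which is symmetric in the two representations. For positive definiteness, take $q_1,\dots,q_k\in\Q_{>0}$ and clear a common denominator $D$, so $q_i=a_i/D$. Then $\phi(q_i/q_j)=\phi(a_i/a_j)=\langle U_{a_i}1_B,U_{a_j}1_B\rangle$, and hence $\sum c_i\bar c_j\phi(q_i/q_j)=\|\sum c_iU_{a_i}1_B\|^2\ge0$. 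Bochner's theorem on the discrete abelian group $\Q_{>0}$, whose dual is the compact group $\M$, then gives a finite positive measure $\sigma$ on $\M$ with $\phi(q)=\int_\M f(q)\,d\sigma(f)$. In particular $\mu(T_n^{-1}B\cap B)=\int f(n)\,d\sigma(f)$ for every $n$.

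\textbf{Step 2: the atom at $1$.} I expect this to be the main step. Use the Følner sequence $\Phi_K$ and Lemma~\ref{lem:mult} with dominated convergence, which give
\[
\sigma(\{1\})=\lim_K \E_{Q,Q'\in\Phi_K}\int f(Q/Q')\,d\sigma(f).
\]
To see this, note that $\E_{Q,Q'}f(Q)\overline{f(Q')}=|\E_Qf(Q)|^2$, and this tends to $1_{f=1}$. The double average equals
\[
\E_{Q,Q'}\langle U_Q1_B,U_{Q'}1_B\rangle=\Big\|\E_{Q\in\Phi_K}U_Q1_B\Big\|^2.
\]
By Cauchy--Schwarz this is at least $\big|\langle \E_Q U_Q1_B,1\rangle\big|^2$. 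Since each $T_Q$ preserves $\mu$, we have $\langle U_Q1_B,1\rangle=\mu(T_Q^{-1}B)=\mu(B)$, so the lower bound is $\mu(B)^2$. Passing to the limit in $K$ yields $\sigma(\{1\})\ge\mu(B)^2$.

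The only delicate points are the well-definedness in Step 1 and the use of the Følner averages to isolate the atom. Both are handled above, so the argument is complete modulo routine checks.
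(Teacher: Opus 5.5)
Your proof is correct. Step~1 matches the paper: both check that $a/b\mapsto\langle 1_B\circ T_a,1_B\circ T_b\rangle$ is well defined and positive definite on $(\Q_{>0},\times)$, then apply Bochner--Herglotz on the dual $\M$. The difference is in how you obtain the atom bound.

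The paper applies the representation to the centred function $1_B-\mu(B)$, getting a positive measure $\nu$ for the centred correlations, and sets $\sigma:=\nu+\mu(B)^2\delta_1$. The bound $\sigma(\{1\})\geq\mu(B)^2$ is then immediate from $\nu\geq0$, with no limiting argument.

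You instead represent the uncentred correlations directly. You then isolate the atom using Lemma~\ref{lem:mult}, dominated convergence and the Cauchy--Schwarz bound
\begin{align*}
\Big\|\E_{Q\in\Phi_K}U_Q1_B\Big\|_2^2\geq\Big|\Big\langle\E_{Q\in\Phi_K}U_Q1_B,1\Big\rangle\Big|^2=\mu(B)^2.
\end{align*}
Both constructions represent the same positive definite function on all of $\Q_{>0}$, so by uniqueness of Fourier--Stieltjes transforms the two measures coincide. The only difference is how the atom is seen.

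The paper's centring trick is shorter and does not need the F\o{}lner averages. Your argument gives the exact formula $\sigma(\{1\})=\lim_{K\to\infty}\|\E_{Q\in\Phi_K}U_Q1_B\|_2^2$. By the mean ergodic theorem for the commuting isometries $U_n$, this is the squared norm of the projection of $1_B$ onto the invariant functions. It therefore shows that equality $\sigma(\{1\})=\mu(B)^2$ holds exactly when that projection is constant, for instance for multiplicatively ergodic actions.
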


\begin{proof}
This is the Bochner--Herglotz representation used in
\cite[Section~2.1]{frantzikinakis2023partitionregularitypythagoreanpairs}.
By measure preservation, the map
\begin{align*}
 a/b\longmapsto
 \langle 1_B\circ T_a,1_B\circ T_b\rangle_{L^2(\mu)}
\end{align*}
is well defined and positive definite on $(\Q_{>0},\times)$.
Applying the representation to $1_B-\mu(B)$ gives a finite positive
measure $\nu$ representing the centered correlations. Then
$\sigma:=\nu+\mu(B)^2\delta_1$ has the asserted representation and
$\sigma(\{1\})\geq\mu(B)^2$.
\end{proof}

We now deduce recurrence and its relative-density refinement by fixing
$K$ before choosing an element of $\Phi_K$ for each sufficiently large $N$.

\begin{proof}[Proof of Theorem~\ref{thm:suff}]
Put $a:=\mu(B)^2$. For $m\in\N$, write
\begin{align*}
c(m)&:=\mu(T_m^{-1}B\cap B),\\
S&:=\{m\in R:c(m)>a-\varepsilon\}.
\end{align*}
It suffices to consider $0<\varepsilon<a$. Applying
Lemma~\ref{lem:spectral} and Proposition~\ref{prop}, we may fix a
sufficiently large $K$ such that, for every sufficiently large $N$, we have
\[
\E_{Q\in\Phi_K}
\E_{\substack{n\leq N\\Qn\in R}}c(Qn)>a-\frac\varepsilon2.
\]
For each such $N$, there exists an element $Q=Q_N\in\Phi_K$ such that the
inner average exceeds $a-\varepsilon/2$. Since
$0\leq c(m)\leq1$ and $c(m)\leq a-\varepsilon+1_S(m)$ for $m\in R$, we obtain
\begin{equation} \label{eq:ratio}
|S\cap[1,QN]|
\geq\frac\varepsilon2\,|R_Q\cap[1,N]|.
\end{equation}
Each set $R_Q$ is infinite and $\Phi_K$ is finite. Consequently,
the minimum of the counts on the right-hand side tends to infinity with
$N$, proving that $S$ is infinite.

Suppose now that \eqref{eq:nolocal} and \eqref{eq:regular} hold,
and put $L:=\max\Phi_K$. For a sufficiently large real $x$, apply
\eqref{eq:ratio} with $N:=\lfloor x/L\rfloor$. The selected element
satisfies $QN\leq x$. Since $QN\geq N\geq x/(2L)$, the two counting
hypotheses give
\begin{align*}
|R_Q\cap[1,N]|
&=|R\cap Q\N\cap[1,QN]|\\
&\gg_{R,Q}R(QN)
 \geq R(x/(2L))
 \gg_{R,L}R(x).
\end{align*}
The implied constants can be made uniform over the finite set $\Phi_K$.
It follows that $|S\cap[1,x]|\gg_{R,K}\varepsilon|R\cap[1,x]|$ for all
sufficiently large $x$, giving the required positive relative lower density.
\end{proof}

\section{Proof of Theorem~\ref{thm:ps}}

The proof reduces to comparing multiplicative means on smooth
Piatetski--Shapiro sets with weighted means on smooth numbers. The comparison
is uniform in the multiplicative function and holds for every fixed dilation.

\begin{proposition}\label{prop:ps}
Let $1<c<6/5$ and put $\gamma:=c^{-1}$. There exists
$\delta=\delta(c)>0$ such that, for every fixed $Q\in\N$, we have
\begin{align*}
 \sum_{\substack{n\leq N\\Qn\in\mathcal P_c\cap\mathcal S_\delta}}f(n)
 =\gamma Q^{\gamma-1}
   \sum_{\substack{n\leq N\\n\in\mathcal S_\delta}}
       \frac{f(n)}{n^{1-\gamma}}+o_{Q,c}(N^\gamma)
\end{align*}
as $N\to\infty$, uniformly for $f\in\M$.\footnote{Adapting the
argument in \cite[Section~2]{MR3705270}, one can show that without the
smoothness restriction, Proposition~\ref{prop:ps} is false in general.}
\end{proposition}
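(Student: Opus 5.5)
We detect membership in $\mathcal P_c$ by the standard indicator formula. For $m\geq2$, $m\in\mathcal P_c$ if and only if there is an integer $k$ with $m\leq k^c<m+1$, i.e.\ $\lfloor -m^\gamma\rfloor-\lfloor-(m+1)^\gamma\rfloor=1$, so
\begin{align*}
 1_{\mathcal P_c}(m)=\big((m+1)^\gamma-m^\gamma\big)+\psi(-(m+1)^\gamma)-\psi(-m^\gamma).
\end{align*}
(Here $\gamma<1$ guarantees that at most one $k$ occurs.) Substituting $m=Qn$ and summing $f(n)1_{\mathcal S_\delta}(Qn)$ over $n\leq N$, the main term is $(Qn+1)^\gamma-(Qn)^\gamma=\gamma Q^{\gamma-1}n^{\gamma-1}+O(Q^{\gamma-2}n^{\gamma-2})$. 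The error term sums to $O_Q(1)$.

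We must replace $1_{\mathcal S_\delta}(Qn)$ by $1_{\mathcal S_\delta}(n)$. These differ only when $P^+(n)$ or $P^+(Q)$ lies within a factor $Q$ of the threshold $n^\delta$, i.e.\ $n^\delta<P^+(n)\leq (Qn)^\delta$ essentially. Weighted by $n^{\gamma-1}$, the number of such $n$ is $o(N^\gamma)$ by the prime number theorem (Mertens: $\sum_{n^\delta<p\leq Q^\delta n^\delta}1/p\to0$). This gives the stated main term. It remains to show, uniformly in $f\in\M$,
\begin{align*}
 \sum_{\substack{n\leq N\\ Qn\in\mathcal S_\delta}}f(n)\big(\psi(-(Qn+1)^\gamma)-\psi(-(Qn)^\gamma)\big)=o_Q(N^\gamma).
\end{align*}

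This is the main obstacle. Since $f$ is arbitrary, we cannot Fourier-expand in $f$. Instead we use smoothness to factor. Every $\delta$-smooth $m\asymp M$ can be written greedily as $m=uv$ with $u\in[U,UM^\delta]$ for any chosen $U\leq M$, with $u,v$ essentially $\delta$-smooth and the factorization canonical, e.g.\ by taking $u$ the smallest initial product of primes in increasing order exceeding $U$. Then $f(n)$ factors as $a_ub_v$ with bounded coefficients, and the sum becomes a Type II bilinear form.

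We dyadically decompose $n\sim M$ and expand $\psi$ via Vaaler's trigonometric polynomial of length $H=M^{1-\gamma+\eta}$. The task then reduces to bounding
\begin{align*}
 \sum_{1\leq h\leq H}\frac1h\Big|\sum_{u\sim U}\sum_{v\sim V}a_ub_v\,e\big(h(Quv)^\gamma\big)\big(1-e(h\gamma(Quv)^{\gamma-1}+\cdots)\big)\Big|.
\end{align*}
The second factor is handled by partial summation. The $1/h$ factor combined with the derivative factor $h M^{\gamma-1}$ gives the required saving $M^{\gamma}$ target. We apply Heath-Brown's bilinear estimate \cite{MR698168} for $\sum a_ub_v e(h(uv)^\gamma)$, valid when $U$ lies in a suitable range such as $M^{1/2}\ll U\ll M^{1-\gamma+\cdots}$-type windows. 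This yields a power saving $M^{\gamma-\eta'}$ precisely when $c<6/5$.

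To land $U$ inside the admissible window, we choose $\delta$ smaller than the window width in logarithmic scale. The smoothness of $Qn$ only costs $Q^\delta=O_Q(1)$. The ranges $M\leq N^{1-\epsilon}$ contribute trivially $O(N^{\gamma(1-\epsilon)})$, and the variable separation (removing the coupling $u v\leq N$, $uv$ in dyadic ranges) is done by standard Perron/Fourier truncation at logarithmic cost. All bounds depend on $f$ only through $|a_u|,|b_v|\leq1$, which gives uniformity over $\M$.

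The delicate point I expect is checking that Heath-Brown's exponent range with $c<6/5$ admits a nonempty window of $U$ compatible with the greedy factorization, and fixing $\delta(c)$ accordingly.
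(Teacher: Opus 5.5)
Your outline matches the paper's: the indicator identity, the Taylor main term, swapping $Qn\in\mathcal S_\delta$ for $n\in\mathcal S_\delta$ via Mertens/Chebyshev, a smooth-number factorization feeding Heath-Brown's Type~II estimate, and $\delta$ small. But the step ``$f(n)$ factors as $a_ub_v$ and the sum becomes a Type~II bilinear form'' is where the proof actually lives, and as stated it is false. Besides the size of $uv$, two further constraints couple $u$ and $v$.

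\emph{The factorization constraint.} Canonicity of your factorization is the condition $P^+(u)\le P^-(v)$. This is not a constraint on $uv$, so the Perron separation you describe does not touch it. The paper removes the analogous constraint by fixing the splitting prime $p=P^-(d)\le(2Z)^\delta$ and using $a_p(d)=f(d)1_{\{U<d\le Up,\ P^-(d)=p\}}$ and $b_p(Q\ell)=f(\ell)1_{\{P^+(\ell)\le p\}}$. This costs a factor $Z^\delta$. That is why $\delta$ must lie below the power saving ($\eta/7>2\delta$), not merely below the width of the admissible window.

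\emph{The smoothness constraint.} The condition $P^+(Quv)\le(Quv)^\delta$ is not implied by any conditions on $u$ and $v$ separately; the bilinear form over your ranges contains non-smooth $n$, e.g.\ $v$ a large prime. Nor can it be replaced by $P^+(n)\le M^\delta$ on dyadic blocks: the discrepancy set has size $\asymp M/\log M$, far above the $o(M^\gamma)$ needed for the sawtooth sum.

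The paper handles smoothness by ordering primes non-increasingly, so that $P^+(n)=P^+(d)$ lies in the factor $d$ that sits inside the square in Heath-Brown's Cauchy--Schwarz step. Smoothness then reads $ds\in\mathcal I_d:=(Z,QT]\cap[\max\{P^+(Q),P^+(d)\}^{1/\delta},\infty)$, an interval depending only on $d$. Lemma~\ref{lem:ps-typeii} re-runs Heath-Brown's Lemma~4 and checks that, after expanding the square, the $s$-sum runs over $(S,2S]\cap d^{-1}\mathcal I_d\cap r^{-1}\mathcal I_r$. This is still an interval, so the van der Corput bound applies.

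In your increasing ordering the largest prime lies in $v$. If $u$ is the variable placed in the Type~II window, as you intend, then $v$ is the variable summed last. After expanding the square, the surviving set $\{v:\ Q\min(u,u')v\ge\max\{P^+(Q),P^+(v)\}^{1/\delta}\}$ is not an interval, and the exponential-sum estimate breaks down. You would have to either make $v\asymp M/U$ the squared variable lying in the window, or separate $1_{\{uv\ge w(v)\}}$ by a Perron integral with controlled truncation error.

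Finally, the window you quote, $M^{1/2}\ll U\ll M^{1-\gamma+\cdots}$, is empty for $\gamma>1/2$. The squared variable must satisfy $Z^{1-\gamma+\eta}<d<Z^{5\gamma-4-\eta}$, which is non-empty exactly when $c<6/5$. The requirement $U(2Z)^\delta<V$, together with $\eta/7>2\delta$, is what fixes $\delta$.
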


We first deduce Theorem~\ref{thm:ps} from the proposition. Let $\rho$ denote
the \textit{Dickman--de Bruijn function}, defined by $\rho(u)=1$ for $0\leq u\leq1$
and
\begin{align*}
 u\rho'(u)+\rho(u-1)=0\qquad(u>1).
\end{align*}
The classical smooth-number asymptotic
\cite[Theorem~14.5]{MR3971232} gives
\begin{align}\label{eq:ps-smooth-density}
 |\{n\leq N:n\in\mathcal S_\delta\}|
       =(\rho(\delta^{-1})+o_\delta(1))N.
\end{align}
Indeed, for $\varepsilon N<n\leq N$, the cutoff $n^\delta$ lies between
$(\varepsilon N)^\delta$ and $N^\delta$. Applying the usual asymptotic with
these two fixed cutoffs and then letting $\varepsilon\to0^+$ proves the
assertion. Partial summation and Proposition~\ref{prop:ps} with $f\equiv1$
therefore imply
\begin{align*}
 |\{n\leq N:Qn\in\mathcal P_c\cap\mathcal S_\delta\}|
       =(\rho(\delta^{-1})+o_{Q,c}(1))Q^{\gamma-1}N^\gamma.
\end{align*}
Consequently, uniformly for $f\in\M$, we have
\begin{align}\label{eq:sumps}
 \E_{\substack{n\leq N\\Qn\in\mathcal P_c\cap\mathcal S_\delta}}f(n)
 =\frac{\gamma}{\rho(\delta^{-1})N^\gamma}
   \sum_{\substack{n\leq N\\n\in\mathcal S_\delta}}
       \frac{f(n)}{n^{1-\gamma}}+o_{Q,c}(1).
\end{align}
The right-hand side is independent of $Q$ up to the error term. Comparing
with $Q=1$ proves multiplicative pseudorandomness, and
Theorem~\ref{thm:main} gives the claimed recurrence.

The same asymptotic also records the local density needed in
Theorem~\ref{thm:suff}. For every fixed $Q\in\N$, we have
\begin{align}\label{eq:pscond}
 |\{n\leq N:n\in\mathcal P_c\cap\mathcal S_\delta,\ Q\mid n\}|
  =\left(\frac1Q+o_{Q,c}(1)\right)
       |\{n\leq N:n\in\mathcal P_c\cap\mathcal S_\delta\}|.
\end{align}
Moreover, since $|\mathcal P_c\cap[1,N]|\sim N^\gamma$, we obtain
\begin{align}\label{eq:pscond2}
 |\{n\leq N:n\in\mathcal P_c\cap\mathcal S_\delta\}|
  =(\rho(\delta^{-1})+o_c(1))|\mathcal P_c\cap[1,N]|.
\end{align}

To prove the proposition, we use a factorization of smooth integers followed
by Heath--Brown's bilinear estimate. We first record the factorization that
separates the smoothness conditions on the two variables.

\begin{lemma}\label{lem:ps-factorization}
Let $U\geq1$. Every integer $n>U$ has a unique factorization $n=d\ell$
satisfying
\begin{align*}
 U<d\leq U P^-(d),\qquad P^+(\ell)\leq P^-(d).
\end{align*}
In this factorization, we have $P^+(n)=P^+(d)$. If $P^+(n)\leq y$, then
$d\leq Uy$.
\end{lemma}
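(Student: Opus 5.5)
We build the factorization greedily from the largest prime factors down. Write the prime factorization of $n$ with multiplicity in non-increasing order, $n=p_1p_2\cdots p_k$ with $p_1\geq p_2\geq\cdots\geq p_k$, and set $D_j:=p_1\cdots p_j$ for $0\leq j\leq k$, so $D_0=1\leq U<n=D_k$. Let $j$ be the least index with $D_j>U$; it exists and satisfies $j\geq1$. Put $d:=D_j$ and $\ell:=n/d=p_{j+1}\cdots p_k$. Then $P^-(d)=p_j$ and $P^+(\ell)=p_{j+1}\leq p_j$ (with $P^+(1)=1$ if $\ell=1$). Minimality of $j$ gives $D_{j-1}\leq U$, so $d=D_{j-1}p_j\leq UP^-(d)$. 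This proves existence.

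For uniqueness, suppose $n=d\ell$ satisfies the two conditions. The condition $P^+(\ell)\leq P^-(d)$ says that every prime in $\ell$ is at most every prime in $d$. Hence, in the sorted list, $d$ is an initial segment $D_i$. One subtlety is a prime $p$ equal to $P^-(d)$ that also divides $\ell$. Since the list contains that prime with multiplicity and both sides consist only of copies of $p$ at the boundary, $d$ is still some $D_i$ with $i\geq1$, because $d>U\geq1$. The condition $D_i\leq UP^-(D_i)=Up_i$ gives $D_{i-1}=D_i/p_i\leq U$, and $D_i>U$. So $i$ is exactly the least index with $D_i>U$, namely $i=j$. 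Thus $d$ and $\ell$ are determined.

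The remaining claims follow directly. Because $d$ contains $p_1=P^+(n)$, we get $P^+(d)=P^+(n)$. If $P^+(n)\leq y$, then $d\leq UP^-(d)\leq UP^+(d)=UP^+(n)\leq Uy$.

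The only delicate point is the bookkeeping of repeated primes in the uniqueness step. Working with the multiset ordering, rather than with the set of distinct primes, handles it cleanly.
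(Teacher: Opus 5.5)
Your proof is correct and takes the same route as the paper: order the prime factors with multiplicity non-increasingly, let $d$ be the shortest initial product exceeding $U$, and observe that the two conditions force any admissible $d$ to be this initial product. You also spell out details the paper leaves implicit, namely the strict monotonicity of the partial products $D_j$ that pins down the index and the handling of repeated primes at the boundary.
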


\begin{proof}
Arrange the prime factors of $n$, counted with multiplicity, in
non-increasing order, and let $d$ be their shortest initial product exceeding
$U$. The preceding product is $d/P^-(d)\leq U$, while every remaining prime
factor is at most $P^-(d)$. Conversely, the two displayed conditions force
$d$ to be this initial product, which proves uniqueness and the remaining
assertions.
\end{proof}

The next lemma reduces the Piatetski--Shapiro indicator to exponential sums.
Its proof follows the Fourier approximation used in
\cite[Section~2]{MR698168}.

\begin{lemma}\label{lem:ps-sawtooth}
Let $\frac12<\gamma<1$ and fix $Q\in\N$. Suppose that $X\geq2$ and
$H\geq2$ is an integer. For any complex numbers $u_n$ satisfying
$|u_n|\leq1$, define
\begin{align*}
 S_h(t):=\sum_{X<n\leq t}u_n e\bigl(h(Qn)^\gamma\bigr).
\end{align*}
Then, uniformly for $X\leq T\leq2X$, we have
\begin{gather*}
\sum_{X<n\leq T}u_n
    \bigl(\psi(-(Qn+1)^\gamma)-\psi(-(Qn)^\gamma)\bigr)\\
 \ll_{Q,\gamma}
 X^{\gamma-1}\max_{X\leq t\leq2X}
       \sum_{1\leq|h|\leq H}|S_h(t)|
 +\frac{X\log(2H)}H+X^{1-\gamma}+(HX^\gamma)^{1/2}.
\end{gather*}
\end{lemma}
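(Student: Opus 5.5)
The approach is the classical Vaaler/Heath--Brown truncated Fourier expansion of the sawtooth $\psi$. Vaaler's lemma gives, for each integer $H\geq2$, a trigonometric polynomial
\[
\psi_H(t)=\sum_{1\leq|h|\leq H}a_h e(ht),\qquad |a_h|\ll\frac1{|h|},
\]
together with a nonnegative majorant of the error,
\[
|\psi(t)-\psi_H(t)|\leq \sum_{|h|\leq H}b_h e(ht),\qquad b_0\ll \frac1H,\ |b_h|\ll\frac1H.
\]
I would apply this with $t=-(Qn+1)^\gamma$ and with $t=-(Qn)^\gamma$, and so split the left-hand side into a main part and an error part.

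\emph{Main part.} The main part is
\[
\sum_{1\leq|h|\leq H}a_h\sum_{X<n\leq T}u_n\bigl(e(-h(Qn+1)^\gamma)-e(-h(Qn)^\gamma)\bigr).
\]
Write $e(-h(Qn+1)^\gamma)-e(-h(Qn)^\gamma)=e(-h(Qn)^\gamma)\,\phi_h(n)$, where
\[
\phi_h(n):=e\bigl(-h((Qn+1)^\gamma-(Qn)^\gamma)\bigr)-1.
\]
Since $(Qn+1)^\gamma-(Qn)^\gamma\asymp_{Q} n^{\gamma-1}$, we have $\phi_h(n)\ll |h|X^{\gamma-1}$, and its derivative in $n$ is $\ll |h|X^{\gamma-2}$. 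Partial summation over $X<n\leq T$ against $S_{-h}(t)$ therefore bounds the inner sum by $\ll_Q |h|X^{\gamma-1}\max_t|S_{-h}(t)|$. Multiplying by $|a_h|\ll1/|h|$ and summing over $h$ yields $X^{\gamma-1}\max_t\sum_{h}|S_h(t)|$, since the range $1\leq|h|\leq H$ is symmetric under $h\mapsto-h$.

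\emph{Error part.} By the majorant, the error is bounded by
\[
\sum_{|h|\leq H}|b_h|\Bigl|\sum_{X<n\leq T}e\bigl(h(Qn)^\gamma\bigr)\Bigr|,
\]
plus the same expression with $Qn+1$ in place of $Qn$; here $|u_n|\leq1$ has been used to drop the coefficients. The term $h=0$ contributes $\ll X/H$. For $h\neq0$, the phase $h(Qn)^\gamma$ has first derivative $\asymp_Q |h|X^{\gamma-1}$, which is small when $|h|\leq H$ is moderate. Kusmin--Landau then gives $\ll X^{1-\gamma}/|h|$ whenever the derivative is at most $1/2$. In general, the van der Corput second-derivative test, with second derivative $\lambda\asymp |h|X^{\gamma-2}$, gives $X\lambda^{1/2}+\lambda^{-1/2}\ll |h|^{1/2}X^{\gamma/2}+|h|^{-1/2}X^{1-\gamma/2}$. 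Summing with weight $1/H$: the Kusmin--Landau bound contributes $X^{1-\gamma}\log H/H\ll X^{1-\gamma}$, the first van der Corput term contributes $\ll (HX^\gamma)^{1/2}$, and the second contributes $\ll X^{1-\gamma/2}H^{-1/2}$. That last contribution is not among the listed terms, so I would reorganise the estimate, using Kusmin--Landau for $|h|\ll X^{1-\gamma}$ and the second-derivative test only above that range.

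\emph{Main obstacle.} I expect the obstacle to be exactly this bookkeeping: matching the error bounds to the claimed shape, including the $X\log(2H)/H$ term. If a direct match fails, the fallback is to avoid exponential sums in the error altogether. One would bound the error trivially via $|\psi-\psi_H|\ll\min(1,1/(H\|t\|))$ and count the $n$ with $\|(Qn)^\gamma\|$ small. That count is controlled by the spacing $\asymp X^{\gamma-1}$ of consecutive values $(Qn)^\gamma$, which gives $X/H$ times a logarithm plus $X^{1-\gamma}$ for the endpoint effects. I would combine this with the van der Corput estimate to produce the $(HX^\gamma)^{1/2}$ term. Uniformity in $T\in[X,2X]$ is automatic, since every bound above is taken with the maximum over $t$.
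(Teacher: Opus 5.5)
Your argument is correct. It differs from the paper only in how $\psi$ is truncated; the main part is handled exactly as in the paper, where your $a_h\phi_h$ plays the role of the paper's $b_h$.

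\textbf{Comparison with the paper.} For the error, the paper follows Heath-Brown. It writes $\psi-\psi_H=O(g(t,H))$ with $g(t,H)=\min\{1,1/(H\|t\|)\}$ and expands $g$ in its full Fourier series, with $c_h\ll\min\{\log(2H)/H,1/|h|,H/h^2\}$. It must therefore also sum the tail $|h|>H$ using $H/h^2$, and the term $X\log(2H)/H$ comes from $c_0\asymp\log(2H)/H$. Your Vaaler majorant is a trigonometric polynomial of degree $H$ with all coefficients $\ll 1/H$. So there is no tail, the zero frequency contributes only $X/H$, and your final bound is slightly sharper than the one stated. In particular, the ``obstacle'' of producing $X\log(2H)/H$ is not an obstacle, since $X/H\leq X\log(2H)/H$. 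For the nonzero frequencies both routes use the same input, \cite[Lemma~1]{MR698168}: $\sum_{X<n\leq T}e(h(Qn+a)^\gamma)\ll |h|^{-1}X^{1-\gamma}+(|h|X^\gamma)^{1/2}$. Your proposed reorganisation is precisely its proof. When $|h|X^{\gamma-1}\gg 1$, the term $\lambda^{-1/2}$ in the second-derivative test satisfies $\lambda^{-1/2}\ll X\lambda^{1/2}$ with $\lambda\asymp |h|X^{\gamma-2}$, so the stray $X^{1-\gamma/2}H^{-1/2}$ never appears. Weighting by $1/H$ and summing over $1\leq|h|\leq H$ then gives $X^{1-\gamma}\log(2H)/H+(HX^\gamma)^{1/2}$.

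\textbf{Two small corrections.} First, summing your per-$h$ partial-summation bounds gives $X^{\gamma-1}\sum_h\max_t|S_h(t)|$. That is larger than the stated $X^{\gamma-1}\max_t\sum_h|S_h(t)|$. To get the stated form, keep the boundary term at $T$ and the integral $\int_X^T S_{-h}(t)\phi_h'(t)\,dt$, sum over $h$ inside the integral, and only then take the maximum over $t$. This is the form Lemma~\ref{lem:ps-typeii} is designed to bound, since there $T$ is common to all $h$ and the intervals $\mathcal I_d$ do not depend on $h$.

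Second, drop the fallback; it would not work. Once $H>X^{1-\gamma}$, counting $n$ with $\|(Qn)^\gamma\|$ small via the spacing of consecutive values leaves an $O(1)$ endpoint loss for each integer in $[(QX)^\gamma,(2QX)^\gamma]$. That is of order $X^\gamma$ in total, not $X^{1-\gamma}$. For the choice $H\asymp X^{1-\gamma}(\log X)^3$ used later, $X^\gamma$ exceeds the right-hand side of the lemma.
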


\begin{proof}
The standard Fourier approximation \cite[p.~245]{MR698168} gives
\begin{align*}
 \psi(t)=-\sum_{0<|h|\leq H}\frac{e(ht)}{2\pi ih}+O(g(t,H)),
\end{align*}
where
\begin{align*}
 g(t,H):=\min\left\{1,\frac1{H\|t\|}\right\}.
\end{align*}
Here $g(t,H)=\sum_{h\in\Z}c_he(ht)$ with
\begin{align*}
 c_0\ll&\frac{\log(2H)}H,\\
 c_h\ll&\min\left\{\frac{\log(2H)}H,\frac1{|h|},
\frac H{|h|^2}\right\}\qquad (h\ne0) .
\end{align*}
For $a\in\{0,1\}$ and $\lambda\neq0$, the derivative estimate in \cite[Lemma~1]{MR698168} gives
\begin{align*}
 \sum_{X<n\leq T}e\bigl(\lambda(Qn+a)^\gamma\bigr)
 \ll_{Q,\gamma}|\lambda|^{-1}X^{1-\gamma}
                     +( |\lambda|X^\gamma)^{1/2}
\end{align*}
Summing against the Fourier
coefficients, we obtain
\begin{align*}
 \sum_{X<n\leq2X}g(-(Qn+a)^\gamma,H)
 \ll_{Q,\gamma}\frac{X\log(2H)}H+X^{1-\gamma}+(HX^\gamma)^{1/2}.
\end{align*}
To treat the finite Fourier sum, put
\begin{align*}
 \Delta_Q(x)&:=(Qx+1)^\gamma-(Qx)^\gamma,\\
 b_h(x)&:=\frac{1-e(-h\Delta_Q(x))}{2\pi ih}.
\end{align*}
For $X\leq x\leq2X$, we have
\begin{align*}
 |b_h(x)|\ll_{Q,\gamma}X^{\gamma-1},\qquad
 |b_h'(x)|\ll_{Q,\gamma}X^{\gamma-2}.
\end{align*}
The finite Fourier contribution is
\begin{align*}
\sum_{0<|h|\leq H}\sum_{X<n\leq T}u_nb_h(n)e(-h(Qn)^\gamma). 
\end{align*}
Partial summation gives the first term in the claimed bound, while the
preceding estimate for $g$ bounds the remainder.
\end{proof}

We shall apply Heath--Brown's Type II estimate with an interval depending
on one factor. For completeness, we include the verification that this additional dependence
does not change his argument.

\begin{lemma}\label{lem:ps-typeii}
Let $\eta>0$ and $\frac12+\eta\leq\gamma<1$. For $Z\geq2$, put
\begin{align*}
 H:=\left\lfloor Z^{1-\gamma}(\log Z)^3\right\rfloor,\qquad
 U:=Z^{1-\gamma+\eta},\qquad V:=Z^{5\gamma-4-\eta}.
\end{align*}
For each positive integer $d$, let $\mathcal I_d\subseteq(Z,2Z]$ be an
interval. Suppose that $|a_d|,|b_s|\leq1$ and that $a_d=0$ unless
$U<d<V$. Then
\begin{align*}
 \sum_{1\leq|h|\leq H}
 \left|\sum_{\substack{d,s\geq1\\ds\in\mathcal I_d}}
      a_db_s e\bigl(h(ds)^\gamma\bigr)\right|
 \ll_{\gamma,\eta}Z^{1-\eta/7}.
\end{align*}
\end{lemma}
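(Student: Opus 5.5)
The plan is to run Heath--Brown's Type II argument from \cite{MR698168} essentially verbatim, with one change: the interval constraint $ds\in\mathcal I_d$ depends on $d$, and we separate it from the $s$-variable before applying Cauchy--Schwarz.

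First we split $d$ into dyadic ranges $D<d\leq 2D$ with $U\ll D\ll V$; there are $O(\log Z)$ such ranges. In a fixed range, $s$ runs over $S<s\leq 4S$ with $S\asymp Z/D$, and for each $d$ the condition $ds\in\mathcal I_d$ says that $s$ lies in an interval $J_d$. The key point is that the dependence on $d$ is harmless because, after Cauchy--Schwarz, we sum over $d$ on the inside:
\[
\Bigl|\sum_d a_d\sum_{s\in J_d} b_s e(h(ds)^\gamma)\Bigr|^2
\leq \sum_s|b_s|^2\sum_s\Bigl|\sum_{d:\,s\in J_d}a_d e(h(ds)^\gamma)\Bigr|^2 .
\]
For fixed $s$, the set $\{d: s\in J_d\}$ is arbitrary, so we cannot simply absorb it into $a_d$. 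Instead we would put the intervals on the $d$-side, which is harmless: fold $1_{s\in J_d}=1_{ds\in\mathcal I_d}$ into the coefficient and write $a_{d,s}:=a_d1_{ds\in\mathcal I_d}$. Expanding the square then gives
\[
\sum_{d_1,d_2}a_{d_1}\overline{a_{d_2}}\sum_{s\in J_{d_1}\cap J_{d_2}}e\bigl(h s^\gamma(d_1^\gamma-d_2^\gamma)\bigr),
\]
and the inner sum runs over an interval in $s$, namely $J_{d_1}\cap J_{d_2}$. This is exactly the shape Heath--Brown treats. His estimate for the inner exponential sum, via the van der Corput second-derivative bound (or the exponent pair he uses) with the $s$-interval arbitrary inside $(S,4S]$, is uniform in the endpoints, since those estimates hold for any subinterval.

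Next we combine the $h$-sum with Cauchy--Schwarz as Heath--Brown does. He bounds $\sum_h|\cdot|$ by introducing a Weyl--van der Corput shift in $d$, or equivalently by Cauchy over $(h,s)$ jointly, and then counts near-coincidences $h_1d_1^\gamma\approx h_2d_2^\gamma$ using the spacing lemma. We would follow his display line by line. Wherever a sum over $s$ appears, it is now over $J_{d_1}\cap J_{d_2}$ (or $J_d$), and we invoke Kusmin--Landau or van der Corput on that interval. The resulting bound matches his:
\[
\ll Z^{\varepsilon}\bigl(H Z U^{-1/2}+ H^{1/2}Z^{(1+\gamma)/2}V^{1/2}\cdots\bigr)
\]
in the shape of his Lemma 3 / Type II estimate. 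Inserting $H\approx Z^{1-\gamma}(\log Z)^3$ together with $U=Z^{1-\gamma+\eta}$ and $V=Z^{5\gamma-4-\eta}$ yields $Z^{1-c\eta}$. The constant $1/7$ comes from tracking the exponents, with the logarithmic losses absorbed by the gain of a power of $Z^{\eta}$. The range $U<V$ is nonempty precisely because $\gamma>5/6$, i.e.\ $c<6/5$; this is where the hypothesis enters, although the lemma holds vacuously otherwise.

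The main obstacle is the interval-dependence step. We must verify that every exponential-sum estimate Heath--Brown uses in the $s$-variable is uniform over subintervals, and that no step uses a smooth weight in $s$ or sums by parts across the full range. Where he does use partial summation, for instance to remove a factor like $(ds)^\gamma$ depending on $s$, we would instead use the interval-uniform form of the bound, or detect the interval by a Fourier expansion at a cost of $O(\log Z)$, which the $Z^{-\eta/7}$ saving absorbs.
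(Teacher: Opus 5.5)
Your handling of the $d$-dependent interval matches the paper's. After expanding the square, the conditions $ds\in\mathcal I_d$ and $rs\in\mathcal I_r$ confine $s$ to the interval $(S,2S]\cap d^{-1}\mathcal I_d\cap r^{-1}\mathcal I_r$, where Heath-Brown's Lemma~1 applies. This also forces $s$ to be the outer variable.

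The gap is in how $h$ enters the Cauchy--Schwarz step. Your displayed inequality is for fixed $h$, and your alternative, Cauchy--Schwarz over $(h,s)$ jointly, amounts to the same thing. Neither is Heath-Brown's argument, and neither produces the near-coincidences $hd^\gamma\approx kr^\gamma$ you then invoke. With $h$ outside the square, take $h\asymp J$, $d\asymp R$, $s\asymp S$. The diagonal $d_1=d_2$ alone contributes $JS\cdot JSR=J^2ZS$ to the bound for $L^2$. For $J\asymp H\approx Z^{1-\gamma}$ and $S\asymp Z/R$ this is $Z^{4-2\gamma}/R$ up to logarithms, which is below $Z^2$ only when $R>Z^{2-2\gamma}$. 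Since $Z^{2-2\gamma}>U=Z^{1-\gamma+\eta}$ whenever $\eta<1-\gamma$ (for instance for $\gamma$ near $5/6$), the range $U<d\leq Z^{2-2\gamma}$ is lost, and further splitting only enlarges the diagonal.

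The fix, which is what the paper does, has three steps. Remove the absolute values with unimodular $\varepsilon_h$, and discard $b_s$ using $|b_s|\leq1$. Then apply Cauchy--Schwarz over $s$ alone to $\sum_s\bigl|\sum_{d,h}a_d\varepsilon_h1_{\{ds\in\mathcal I_d\}}e(hd^\gamma s^\gamma)\bigr|$, so that the pair $(d,h)$ is the inner variable. The diagonal is then governed by the quadruple count $\mathcal N(S^{-\gamma})\ll S^{-\gamma}JR^{2-\gamma}+JR\log(2JR)$. This is smaller by a factor $J$ and gives the term $Z^{3-\gamma}R^{-1}$, which is exactly what reaches down to $U$.

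The second missing ingredient is the partition, before Cauchy--Schwarz, of the pairs $(d,h)$ into $m\ll K$ classes on which $hd^\gamma$ ranges over an interval of length $O(JR^\gamma/K)$. It costs a factor $K$ but shrinks both the number of off-diagonal pairs and the size of $\lambda=hd^\gamma-kr^\gamma$ in the second-derivative term. With $K=1$ that term gives $S(JZ^\gamma)^{1/2}J^2R^2\approx Z^{7/2-2\gamma}R$. This is below $Z^2$ only for $R<Z^{2\gamma-3/2}$, which falls short of $V=Z^{5\gamma-4-\eta}$ exactly when the range $U<V$ is non-empty. Taking $K\approx JS^{(\gamma-2)/3}R^{(\gamma+2)/3}$ produces Heath-Brown's six-term bound. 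The exponent $\eta/7$ then comes from checking that each of the six terms is $\ll Z^{2-\eta/3}$ for $U\ll R\ll V$, and taking $\varepsilon<\eta/21$.

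Your displayed final bound is only a placeholder. Taken literally, its term $HZU^{-1/2}\asymp Z^{(3-\gamma-\eta)/2}$, up to logarithms, already exceeds $Z$ once $\gamma+\eta<1$. So as written, the saving $Z^{-\eta/7}$ is asserted rather than derived.
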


\begin{proof}
The negative values of $h$ are handled by conjugating the coefficients,
and hence it suffices to consider positive $h$. Choose $|\varepsilon_h|\leq1$ to remove the absolute
values in the sum over $h$. After interchanging the sums and using
$|b_s|\leq1$, the resulting expression is at most
\begin{align*}
 \sum_s\left|\sum_{\substack{U<d<V\\ds\in\mathcal I_d}}
       a_d\sum_{h\leq H}\varepsilon_h e\bigl(h(ds)^\gamma\bigr)\right|.
\end{align*}
We follow the proof of \cite[Lemma~4]{MR698168}, retaining the dependence
of the interval on $d$. Decompose the variables into dyadic ranges satisfying
$d\asymp R$ together with $s\asymp S$ and $h\asymp J$, where
$RS\asymp Z$ and $J\leq H$. This decomposition incurs only logarithmic
factors.
For one such choice of ranges, put
\begin{align*}
 L:=\sum_{S<s\leq2S}
 \left|\sum_{\substack{d\asymp R,\ h\asymp J\\
                       ds\in\mathcal I_d}}
 a_d\varepsilon_h e\bigl(h(ds)^\gamma\bigr)\right|.
\end{align*}

We first require the spacing bound from \cite[pp.~256--257]{MR698168} that
\begin{gather*}
 \mathcal N(\Delta):=|\{(d,r,h,k):d,r\asymp R,\ h,k\asymp J,
                   \ |hd^\gamma-kr^\gamma|\leq\Delta\}|,\\
 \mathcal N(\Delta)\ll_\gamma \Delta JR^{2-\gamma}+JR\log(2JR).
\end{gather*}
To verify it, put $a:=\gcd(d,r)$ and $b:=\gcd(h,k)$. Fix $a,b$. For each
of the $O(J^2/b^2)$ pairs $(h,k)$ with $\gcd(h,k)=b$, the relevant ratios
$d/r$ lie in an interval of length
$O_\gamma(\Delta/(JR^\gamma))$. Distinct such ratios with $(d,r)=a$ are
separated by at least $a^2/(4R^2)$. Hence the number of quadruples is
\begin{align*}
 \ll_\gamma \frac{J^2}{b^2}
          +\frac{\Delta JR^{2-\gamma}}{a^2b^2}.
\end{align*}
Interchanging the two pairs gives the same bound with $R^2/a^2$ in
place of $J^2/b^2$. Summing the smaller bound over $a$ and $b$, and using
\begin{align*}
 \sum_{a\leq2R}\sum_{b\leq2J}
       \min\left\{\frac{J^2}{b^2},\frac{R^2}{a^2}\right\}
       \ll JR\log(2JR)
\end{align*}
proves the claim.

Let $K\geq1$ be an integer, and partition the pairs $(d,h)$ into
sets $\mathcal P_1,\ldots,\mathcal P_m$, where $m\ll K$, such that
$hd^\gamma$ ranges over an interval of length
$O(JR^\gamma/K)$ on each $\mathcal P_\ell$. For $S<s\leq2S$, put
\begin{align*}
 F_\ell(s):=
 \sum_{\substack{(d,h)\in\mathcal P_\ell\\
                  ds\in\mathcal I_d}}
 a_d\varepsilon_h e\bigl(hd^\gamma s^\gamma\bigr).
\end{align*}
The Cauchy--Schwarz inequality gives
\begin{align*}
 L^2
 &\ll S\sum_{S<s\leq2S}
       \left|\sum_{\ell\leq m}F_\ell(s)\right|^2\\
 &\ll KS\sum_{\ell\leq m}\sum_{S<s\leq2S}|F_\ell(s)|^2.
\end{align*}
Expanding the last square and using $|a_d|,|\varepsilon_h|\leq1$, we
obtain
\begin{align*}
 L^2\ll KS\sum_{\ell\leq m}
 \sum_{\substack{(d,h),(r,k)\in\mathcal P_\ell}}
 \left|\sum_{s\in(S,2S]\cap d^{-1}\mathcal I_d\cap r^{-1}\mathcal I_r}
 e\bigl((hd^\gamma-kr^\gamma)s^\gamma\bigr)\right|.
\end{align*}
Notice that the range of $s$ in the inner sum is an interval. Put
$\lambda:=hd^\gamma-kr^\gamma$. If $\lambda\neq0$, then
\cite[Lemma~1]{MR698168} gives
\begin{align*}
\sum_{s\in(S,2S]\cap d^{-1}\mathcal I_d\cap r^{-1}\mathcal I_r}
 e\bigl(\lambda s^\gamma\bigr)
 \ll_\gamma
 \min\left\{S,\ |\lambda|^{-1}S^{1-\gamma}
 +( |\lambda|S^\gamma)^{1/2}\right\}.
\end{align*}
For $\lambda=0$, the same sum is trivially bounded by $S$.
Apply the spacing bound to $|\lambda|\leq S^{-\gamma}$ and to the
remaining dyadic ranges up to $O(JR^\gamma/K)$. The square-root term is
bounded using the largest possible $|\lambda|$. For every
$\varepsilon>0$, we have
\begin{align*}
 L^2\ll_{\gamma,\varepsilon}Z^\varepsilon KS\bigg\{
 \left(\frac{JZ^\gamma}K\right)^{1/2}
 \left(\frac{J^2R^2}K+JR\right)
 +S^{1-\gamma}\bigl(JR^{2-\gamma}+JRS^\gamma\bigr)\bigg\}.
\end{align*}
Taking
\begin{align*}
 K:=1+\left\lfloor JS^{(\gamma-2)/3}R^{(\gamma+2)/3}\right\rfloor
\end{align*}
and using $S\asymp Z/R$ together with $J\leq Z^{1-\gamma}(\log Z)^3$,
we arrive at
the six-term bound in \cite[p.~257]{MR698168}:
\begin{align*}
 L^2\ll_{\gamma,\varepsilon}Z^\varepsilon
 \bigl(&Z^{5/2-\gamma}+Z^{3-2\gamma}+Z^{3-\gamma}R^{-1}
       +Z^{(10-5\gamma)/3}R^{1/3}\\
       &+Z^{(8-4\gamma)/3}R^{2/3}
       +Z^{(10-8\gamma)/3}R^{4/3}\bigr).
\end{align*}
Since $R\gg U$ and $R\ll V$, each term in parentheses is
$\ll Z^{2-\eta/3}$. Taking $\varepsilon<\eta/21$ and absorbing the
dyadic decomposition into the remaining power saving proves the lemma.
\end{proof}

We now combine the three lemmas. The smoothness parameter is chosen small
enough that the factor obtained from Lemma~\ref{lem:ps-factorization}
remains within the Type II range.

\begin{proof}[Proof of Proposition~\ref{prop:ps}]
Put
\begin{align*}
 \eta:=\frac{6\gamma-5}{4},\qquad \delta:=\frac\eta{25}.
\end{align*}
Then $\frac12+\eta<\gamma$ and
\begin{align*}
 2\eta+\delta<6\gamma-5,\qquad \frac\eta7>2\delta.
\end{align*}
Fix $Q\in\N$ and take $X$ sufficiently large in terms of $Q$ and $c$.
Set $Z:=QX$, and define $H,U,V$ as in Lemma~\ref{lem:ps-typeii}.
In particular, we have $U<X$ and $U(2Z)^\delta<V$.

For $X<n\leq T\leq2X$ with $Qn\in\mathcal S_\delta$, use the
factorization $n=d\ell$ in Lemma~\ref{lem:ps-factorization} and put
$p:=P^-(d)$. Then
\begin{align*}
 p\leq(2Z)^\delta,\qquad U<d\leq Up<V,\qquad P^+(\ell)\leq p.
\end{align*}
For each such prime $p$, define
\begin{align*}
 a_p(d)&:=f(d)1_{\{U<d\leq Up,\ P^-(d)=p\}},\\
 b_p(Q\ell)&:=f(\ell)1_{\{P^+(\ell)\leq p\}}.
\end{align*}
Set $b_p(s):=0$ when $Q\nmid s$.
Also, put
\begin{align*}
 \mathcal I_d:=(Z,QT]\cap
 \left[\max\{P^+(Q),P^+(d)\}^{1/\delta},\infty\right).
\end{align*}
Since $P^+(n)=P^+(d)$, this interval imposes precisely the condition
$Qn\in\mathcal S_\delta$. Uniqueness of the factorization gives
\begin{align*}
 S_h(X,T)&:=\sum_{\substack{X<n\leq T\\Qn\in\mathcal S_\delta}}
                      f(n)e\bigl(h(Qn)^\gamma\bigr)\\
 &=\sum_{p\leq(2Z)^\delta}
   \sum_{\substack{d,s\geq1\\ds\in\mathcal I_d}}
              a_p(d)b_p(s)e\bigl(h(ds)^\gamma\bigr).
\end{align*}
All coefficients are bounded by $1$, and Lemma~\ref{lem:ps-typeii}
therefore shows that, uniformly for $X\leq T\leq2X$, we have
\begin{align}\label{eq:ps-exp}
 \sum_{1\leq|h|\leq H}|S_h(X,T)|
 \ll_{c}Z^{1-\eta/7+\delta}\ll_{Q,c}X^{1-\delta}.
\end{align}

Notice that the exact indicator identity
\begin{align*}
 1_{\{Qn\in\mathcal P_c\}}
 &=\lfloor-(Qn)^\gamma\rfloor-\lfloor-(Qn+1)^\gamma\rfloor\\
 &=\Delta_Q(n)+\psi(-(Qn+1)^\gamma)-\psi(-(Qn)^\gamma)
\end{align*}
also holds when either endpoint is an integer. Apply
Lemma~\ref{lem:ps-sawtooth} with $u_n=f(n)1_{\{Qn\in\mathcal S_\delta\}}$.
Together with \eqref{eq:ps-exp}, it bounds the contribution of the two
sawtooth terms on $(X,T]$ by
\begin{align*}
 O_{Q,c}\left(X^{\gamma-\delta}
       +\frac{X^\gamma}{(\log X)^2}
       +X^{1-\gamma}+X^{1/2}(\log X)^{3/2}\right)
       =o_{Q,c}(X^\gamma).
\end{align*}
Summing over dyadic intervals above $N^{1/2}$ and bounding the remaining
terms trivially, we conclude that
\begin{align*}
 \sum_{\substack{n\leq N\\Qn\in\mathcal P_c\cap\mathcal S_\delta}}f(n)
 =\sum_{\substack{n\leq N\\Qn\in\mathcal S_\delta}}
                         f(n)\Delta_Q(n)+o_{Q,c}(N^\gamma).
\end{align*}
Taylor's theorem gives
\begin{align*}
 \Delta_Q(n)=\gamma Q^{\gamma-1}n^{\gamma-1}
                         +O_{Q,c}(n^{\gamma-2}),
\end{align*}
whose error is summable.

It remains to replace $Qn\in\mathcal S_\delta$ by
$n\in\mathcal S_\delta$. For all sufficiently large $n$, the latter
condition implies the former, since $P^+(Q)$ is fixed. If
$Qn\in\mathcal S_\delta$ but $n\notin\mathcal S_\delta$, then
\begin{align*}
 n^\delta<P^+(n)\leq(Qn)^\delta.
\end{align*}
Chebyshev's bound for primes consequently gives
\begin{align*}
 |\{X<n\leq2X:Qn\in\mathcal S_\delta,\ n\notin\mathcal S_\delta\}|
 \leq\sum_{X^\delta<p\leq(2QX)^\delta}\frac{2X}{p}
 \ll_{Q,c}\frac X{\log X}.
\end{align*}
By dyadic summation, the corresponding weighted difference is
\begin{align*}
 \sum_{n\leq N}n^{\gamma-1}
   \left|1_{\{Qn\in\mathcal S_\delta\}}
                -1_{\{n\in\mathcal S_\delta\}}\right|
 \ll_{Q,c}N^{\gamma/2}+\frac{N^\gamma}{\log N}
 =o_{Q,c}(N^\gamma).
\end{align*}
Combining these estimates proves the proposition, with all errors uniform
for $f\in\M$.
\end{proof}

\section{Proof of Theorem~\ref{thm:nilbohr}}\label{sec:nilbohr}

We estimate the mean of a multiplicative function on a regular irrational
nil-Bohr set by applying the orthogonality theorem of
Frantzikinakis--Host. The uniformity in the multiplicative function will
also be used in Section~\ref{sec:dist}.

\begin{proposition}\label{prop:nilbohr}
Let $R=\mathcal B_{X,g}(A)$ be a regular irrational nil-Bohr set, and put
$\delta=m_X(A)$. For every fixed $Q\in\N$, we have
\begin{align}\label{eq:weighted-main}
 \sum_{\substack{n\leq N\\Qn\in R}}f(n)
 =\delta\sum_{n\leq N}f(n)+o_{Q,X,g,A}(N)
\end{align}
as $N\to\infty$, uniformly for $f\in\M$.
\end{proposition}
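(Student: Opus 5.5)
We write the left-hand side of \eqref{eq:weighted-main} as $\sum_{n\leq N}f(n)1_A(g(Qn)\Gamma)$ and put $h(n):=g(Qn)$. This is again a polynomial sequence with respect to the same rational filtration, since polynomial sequences are closed under composition with $n\mapsto Qn$. The orbit $(h(n)\Gamma)$ is totally equidistributed, because $h(an+b)=g(Qan+Qb)$ and the total equidistribution of $g$ covers all progressions. The plan has two steps. First, approximate $1_A$ by continuous functions. Second, apply the Frantzikinakis--Host uniform orthogonality theorem \cite[Theorem~2.2]{MR3556289} to the mean-zero part.

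\emph{Approximation.} Fix $\varepsilon>0$. Since $m_X(\partial A)=0$, we can choose continuous $F_-\leq 1_A\leq F_+$ on $X$ with $\int_X(F_+-F_-)\,dm_X<\varepsilon$. Standard regularity gives this: take Urysohn functions between closed sets inside $A^\circ$ and open sets containing $\overline A$, whose measures both approach $m_X(A)$. Then
\[
\Bigl|\sum_{n\leq N}f(n)\bigl(1_A-F_-\bigr)(h(n)\Gamma)\Bigr|\leq\sum_{n\leq N}(F_+-F_-)(h(n)\Gamma),
\]
and equidistribution of $h$ shows that the right-hand side is at most $\varepsilon N+o(N)$. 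This bound is uniform in $f$, because it uses only $|f|=1$. So it suffices to prove \eqref{eq:weighted-main} with $1_A$ replaced by a continuous $F$ and $\delta$ replaced by $\int F\,dm_X$. The difference $|\int F_-\,dm_X-\delta|<\varepsilon$ costs another $\varepsilon N$.

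\emph{Orthogonality.} Write $F=\int F\,dm_X+F_0$ with $\int F_0\,dm_X=0$. The constant part gives exactly $(\int F)\sum_{n\leq N}f(n)$. For the remainder we need
\[
\lim_{N\to\infty}\sup_{f\in\M}\Bigl|\E_{n\leq N}f(n)F_0(h(n)\Gamma)\Bigr|=0,
\]
which is precisely the content of \cite[Theorem~2.2]{MR3556289}: for a totally equidistributed polynomial orbit on a nilmanifold and a continuous function of zero integral, the correlation with multiplicative functions of modulus at most one vanishes uniformly over the class of such functions. Letting $\varepsilon\to0$ then gives \eqref{eq:weighted-main} with the claimed uniformity in $f$ and dependence on $Q,X,g,A$.

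\emph{Main obstacle.} The delicate point is matching the hypotheses of \cite[Theorem~2.2]{MR3556289}. That theorem may be stated for nilsequences arising from polynomial sequences with a specified normalization, for example $g(0)=\mathrm{id}$, or with $F$ required to be smooth or Lipschitz rather than merely continuous. To handle this, first replace $h$ by $g_0^{-1}h$ acting on the nilmanifold $X$ via the translate $g_0\Gamma$, or absorb $g_0$ into $F$ by setting $\tilde F(x):=F_0(g_0x)$. Then approximate the continuous $F_0$ uniformly by Lipschitz functions, still of mean zero after subtracting the integral; this costs an error that is uniform in $f$. Finally, confirm that total equidistribution of $h$ is the irrationality hypothesis that theorem requires.
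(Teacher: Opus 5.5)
Your proposal is correct and follows the paper's proof: you check that $n\mapsto g(Qn)$ is totally equidistributed and apply the Frantzikinakis--Host uniform orthogonality theorem \cite[Theorem~2.2]{MR3556289} to a mean-zero function. The only difference is that you carry out the reduction from the Riemann-integrable $1_A-\delta$ to continuous functions yourself, via the sandwich $F_-\leq 1_A\leq F_+$, whereas the paper cites \cite[Remark~(3)]{MR3556289} for exactly that extension.
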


We first deduce Theorem~\ref{thm:nilbohr} from the proposition. Put
\begin{align*}
 D_Q(N):=|R_Q\cap[1,N]|.
\end{align*}
Taking $f\equiv1$ in Proposition~\ref{prop:nilbohr} gives
$D_Q(N)=\delta N+o_{Q,X,g,A}(N)$. Since $|f|=1$, division in
\eqref{eq:weighted-main} shows that, uniformly for $f\in\M$, we have
\begin{align}\label{eq:nilbohr-mean}
 \E_{\substack{n\leq N\\Qn\in R}}f(n)
 =\E_{n\leq N}f(n)+o_{Q,X,g,A}(1).
\end{align}
Comparing this identity with the case $Q=1$ proves multiplicative
pseudorandomness, and Theorem~\ref{thm:main} gives recurrence. Moreover,
we have
\begin{align}\label{eq:bscond}
 \frac{|R\cap Q\N\cap[1,N]|}{|R\cap[1,N]|}
 =\frac{D_Q(\lfloor N/Q\rfloor)}{D_1(N)}
 \longrightarrow\frac1Q \qquad \text{as $N \to \infty.$}
\end{align}
The estimate $|R\cap[1,N]|\sim\delta N$ also verifies
\eqref{eq:regular}. Hence the positive relative density conclusion of
Theorem~\ref{thm:suff} applies to these sets.

To prove the proposition, we use the following orthogonality result.

\begin{lemma}[Frantzikinakis--Host]\label{lem:nil-orthogonality}
Let $X=G/\Gamma$ be a nilmanifold, and let $g:\Z\to G$ be a polynomial
sequence whose orbit is totally equidistributed in $X$. If $F:X\to\C$
is bounded and Riemann integrable with $\int_XF\,dm_X=0$, then
\begin{align*}
 \lim_{N\to\infty}
 \sup_{\substack{f \in \M}}
 \left|\E_{n\leq N}f(n)F(g(n)\Gamma)\right|=0.
\end{align*}
\end{lemma}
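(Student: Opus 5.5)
The plan is to reduce the lemma to the uniform orthogonality theorem of Frantzikinakis--Host \cite[Theorem~2.2]{MR3556289}. That theorem handles continuous functions, or more generally nilsequences coming from polynomial orbits that are totally equidistributed, and gives uniformity over $f\in\M$. The extension to bounded Riemann integrable $F$ will follow from a standard sandwich argument.

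First I recall the continuous case: for $F\in C(X)$ with $\int_X F\,dm_X=0$ and $(g(n)\Gamma)$ totally equidistributed,
\begin{align*}
\lim_{N\to\infty}\sup_{f\in\M}\left|\E_{n\leq N}f(n)F(g(n)\Gamma)\right|=0.
\end{align*}
This is exactly \cite[Theorem~2.2]{MR3556289}, once the hypotheses are matched.
\begin{itemize}
\item Their setup uses polynomial sequences with respect to a rational filtration, as fixed in the introduction.
\item Their irrationality assumption is implied by total equidistribution.
\end{itemize}
If their statement is phrased for multiplicative functions bounded by $1$ rather than $\M$, this is only stronger. If it is phrased for $F$ with vanishing integral on each subnilmanifold component, total equidistribution makes this automatic.

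Next I treat Riemann integrable $F$. By splitting into real and imaginary parts, I may assume $F$ is real-valued. Fix $\varepsilon>0$. Riemann integrability with respect to $m_X$ gives continuous $F_1\leq F\leq F_2$ with $\int_X(F_2-F_1)\,dm_X<\varepsilon$. Put $c_1:=\int_XF_1\,dm_X$; then $|c_1|\leq\varepsilon$, because $\int_X F\,dm_X=0$ and $F$ lies between $F_1$ and $F_2$. Writing $F=(F_1-c_1)+c_1+(F-F_1)$ and using $|f|=1$, I get
\begin{align*}
\sup_{f\in\M}\left|\E_{n\leq N}f(n)F(g(n)\Gamma)\right|
\leq\sup_{f\in\M}\left|\E_{n\leq N}f(n)(F_1-c_1)(g(n)\Gamma)\right|+\varepsilon+\E_{n\leq N}(F_2-F_1)(g(n)\Gamma).
\end{align*}
The first term tends to $0$ by the continuous case. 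The last term tends to $\int_X(F_2-F_1)\,dm_X<\varepsilon$ by equidistribution, applied with $a=1$ and $b=0$. Hence the $\limsup$ is at most $2\varepsilon$, and letting $\varepsilon\to0$ proves the lemma.

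The main obstacle is the first step: verifying that \cite[Theorem~2.2]{MR3556289} applies verbatim with the uniformity over all of $\M$, and reconciling its irrationality hypothesis with total equidistribution. The approximation step is routine. If the cited theorem only gives uniformity for $F$ in a Lipschitz class, I would first approximate continuous $F$ uniformly by Lipschitz functions, which costs only $\|F-F_{\mathrm{Lip}}\|_\infty$.
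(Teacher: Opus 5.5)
Your proposal is correct and follows the same route as the paper. The paper cites \cite[Theorem~2.2]{MR3556289} for continuous $F$ and \cite[Remark~(3)]{MR3556289} for the extension to Riemann integrable functions. Your sandwich argument with continuous $F_1\leq F\leq F_2$ is the approximation that remark encapsulates, written out explicitly. The bound $|c_1|\leq\varepsilon$ and the control of $F-F_1$ by $F_2-F_1$ are both handled correctly.
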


\begin{proof}
This follows from \cite[Theorem~2.2]{MR3556289} and its extension to
Riemann integrable functions in
\cite[Remark~(3)]{MR3556289}.
\end{proof}

\begin{proof}[Proof of Proposition~\ref{prop:nilbohr}]
Fix $Q\in\N$. The polynomial sequence $g_Q(n):=g(Qn)$ is totally
equidistributed. Indeed, its restriction to the progression $a\N+b$ is
$(g(Qan+Qb)\Gamma)_{n\in\N}$, which is equidistributed by the total
equidistribution of $g$. Moreover, the function $1_A-\delta$ is Riemann integrable
and has integral zero. Lemma~\ref{lem:nil-orthogonality} therefore gives
\begin{align*}
\lim_{N \to \infty} \sup_{f\in\M}
 \left|\E_{n\leq N}f(n)(1_R(Qn)-\delta)\right|=0.
\end{align*}
Multiplying by $N$ proves \eqref{eq:weighted-main}.
\end{proof}

\section{Proof of Corollary~\ref{cor:genpoly}}\label{sec:genpoly}

Corollary~\ref{cor:genpoly} follows by a short extension of the proof of
Theorem~\ref{thm:nilbohr}. The only additional point is that the
Bergelson--Leibman representation may retain a periodic component.
The following lemma treats this component on each residue class.

\begin{lemma}\label{lem:gp-short}
Let $u:\Z\to\R$ be a bounded generalized polynomial, and suppose that
there exists $\delta\in\R$ such that for any $a\in\N, \ b\in\Z$, we have
\begin{align*}
 \lim_{N\to\infty}\E_{n\leq N}u(an+b)=\delta.
\end{align*}
Then
\begin{align*}
 \lim_{N\to\infty}
 \sup_{f\in\M}
 \left|\E_{n\leq N}f(n)(u(n)-\delta)\right|=0.
\end{align*}
\end{lemma}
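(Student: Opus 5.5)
By the Bergelson--Leibman representation theorem \cite[Theorem~A$^*$]{MR2318563}, there is a nilmanifold $X=G/\Gamma$, a polynomial sequence $g:\Z\to G$ and a bounded Riemann integrable function $F:X\to\R$ such that $u(n)=F(g(n)\Gamma)$ for all $n$. The plan is to reduce to Lemma~\ref{lem:nil-orthogonality} by factorizing $g$. Here $F$ is piecewise polynomial, so its discontinuities lie in a null set. Apply the factorization theorem \cite[Corollary~1.12]{MR2877065} with a growth function tending to infinity slowly. It writes $g=\epsilon g'\gamma$, where $\epsilon$ is smooth, $g'$ is highly equidistributed on a subnilmanifold $X'$, and $\gamma$ is periodic with some period $q$.

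Since $u$ has no dependence on a scale $N$, I would use the qualitative version of this theorem: an infinite-step, or ``irrational'', factorization in which $\epsilon$ is constant and $g'$ is totally equidistributed on a subnilmanifold. This is also the form used in the derivation sketched before Corollary~\ref{cor:genpoly}.

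Next, split $n$ into residue classes $n=qm+b$ with $0\leq b<q$. On each class, $\gamma(qm+b)\Gamma$ is constant, so
\begin{align*}
u(qm+b)=F_b(h_b(m)\Gamma_b),
\end{align*}
where $h_b(m)$ is a conjugated polynomial sequence and $(h_b(m)\Gamma_b)_{m}$ is totally equidistributed on a nilmanifold $X_b$. The function $F_b$ is bounded and Riemann integrable on $X_b$. The hypothesis with $a=q$ and with $a=qa'$, $b+qb'$ identifies the integral: by total equidistribution,
\begin{align*}
\int_{X_b}F_b\,dm_{X_b}=\lim_{M\to\infty}\E_{m\leq M}u(qm+b)=\delta.
\end{align*}

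It remains to handle the twist $f(qm+b)$, which is not multiplicative in $m$. I would write
\begin{align*}
\E_{n\leq N}f(n)(u(n)-\delta)=\frac1q\sum_{b}\E_{m\leq N/q}f(qm+b)(F_b-\delta)(h_b(m)\Gamma_b)+o(1).
\end{align*}
Then I would detect the residue class by Dirichlet characters modulo $q$ and reduce to $\gcd(b,q)$ parts. Concretely, with $n=dn'$, $d\mid q^\infty$ and $n'$ coprime to $q$, the indicator of $n'\equiv b'\Mod{q}$ is $\frac1{\phi(q)}\sum_\chi\bar\chi(b')\chi(n')$. The product $f\chi$ is again multiplicative, though only on integers coprime to $q$. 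Lemma~\ref{lem:nil-orthogonality} is uniform over $\M$, and its source \cite[Theorem~2.2]{MR3556289} covers all multiplicative functions bounded by $1$, including $f\chi$ extended by zero.

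The cleaner route is therefore to lift back to $n$ directly. The function $n\mapsto 1_{n\equiv b\Mod q}(F_b-\delta)(h_b((n-b)/q)\Gamma_b)$ is itself a nilsequence on $X\times\Z/q\Z$ whose fibres over each residue class are totally equidistributed with mean zero. The Frantzikinakis--Host theorem, in the form ``uniform orthogonality to nilsequences with zero mean on each progression'', then applies, yielding
\begin{align*}
\sup_{f\in\M}\left|\E_{n\leq N}f(n)(u(n)-\delta)\right|\to0.
\end{align*}
Alternatively, one can handle the tail $d>D$ trivially and apply Lemma~\ref{lem:nil-orthogonality} to each $d$, $\chi$ piece.

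The main obstacle is this last step: making sure that the version of the orthogonality theorem available, Lemma~\ref{lem:nil-orthogonality}, tolerates the residue-class restriction, and justifying the qualitative irrational factorization with a Riemann integrable, rather than continuous, $F$. For the latter, I would approximate $F$ above and below by continuous functions whose integrals differ by at most $\varepsilon$.
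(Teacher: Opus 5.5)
Your architecture is the paper's: the Bergelson--Leibman representation, the qualitative factorization $g=\epsilon h\gamma$, splitting into residue classes modulo the period of $\gamma$ using conjugated sequences, identifying $\int F_b=\delta$ from the hypothesis, and then Frantzikinakis--Host orthogonality with a progression restriction. There is one step that fails as you justify it.

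\textbf{The gap: Riemann integrability of $F_b$.} You assert without proof that $F_b$ is Riemann integrable on $X_b$, and you use this twice. First, to get $\int_{X_b}F_b\,dm_{X_b}=\delta$ from equidistribution. Second, to apply the orthogonality theorem. When $H\neq G$, the orbit $\epsilon h(n)\gamma_b\Gamma=\epsilon\gamma_b h_b(n)\Gamma$ lies on the translated subnilmanifold $\epsilon\gamma_bH_b\Gamma/\Gamma$, which is a closed $m_X$-null set. Riemann integrability of $F$ on $X$ says nothing about $F$ there: you may redefine $F$ arbitrarily (boundedly) on a closed null set without losing Riemann integrability on $X$. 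For the same reason, your proposed fix does not work. A sandwich $F_1\le F\le F_2$ by continuous functions with $\int_X(F_2-F_1)\,dm_X<\varepsilon$ gives no control of $\int_{X_b}(F_2-F_1)$ on the subnilmanifold. (The factorization of $g$ itself is unaffected by the regularity of $F$.)

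What closes the step is the structure of $F$. It is piecewise polynomial, and $x\Gamma_b\mapsto F(\epsilon\gamma_bx\Gamma)$ is again piecewise polynomial on $X_b$ by \cite[Proposition~2.8]{MR2318563}. Hence it is Riemann integrable by \cite[Section~0.18]{MR2318563]}. This is exactly how the paper argues. Total equidistribution of the conjugated orbit also deserves a reference, for instance \cite[Corollary~5.5]{MR3556289}, since $\gamma_b$ is only rational.

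\textbf{The obstacle you flag is not one.} The paper indexes the conjugated sequence by $n$ itself: $h_r(n)=\gamma_r^{-1}h(n)\gamma_r$. Then $u(n)=F_r(h_r(n)\Gamma_r)$ for $n\equiv r\Mod{q}$, with $(h_r(n)\Gamma_r)_n$ totally equidistributed. It then applies the progression estimate \cite[Theorem~6.1]{MR3556289} for Lipschitz functions, together with the $L^1$ approximation of \cite[Remark~(3)]{MR3556289}.

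Your Dirichlet-character alternative is a genuinely different route and also works. It needs only \cite[Theorem~2.2]{MR3556289}, which is indeed uniform over all multiplicative functions of modulus at most $1$, so $f\chi$ is covered. Discard the $n=dn'$ with $d\mid q^\infty$ and $d>D$ trivially. For $d\le D$ and $n'\equiv c\Mod{q}$ with $\gcd(c,q)=1$, the piece is
$$\sum_{n'}(f\chi)(n')(F_r-\delta)(h_r(dn')\Gamma_r),\qquad r\equiv dc\Mod{q},$$
and $n'\mapsto h_r(dn')$ is totally equidistributed. Your $h_b((n-b)/q)$ is simply $\gamma_b^{-1}h(n)\gamma_b$, which is defined for every $n$, so no extension to non-integer arguments is needed.
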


\begin{proof}
After an affine rescaling, the representation theorem of
Bergelson--Leibman \cite[Theorem~A$^*$]{MR2318563} gives
\begin{align*}
 u(n)=F(a(n)\Gamma),
\end{align*}
where $G/\Gamma$ is a connected nilmanifold. Here $a:\Z\to G$ is
polynomial and $F$ is bounded and piecewise polynomial. By the
factorization theorem \cite[Corollary~1.12]{MR2877065}, there exist a
connected rational subgroup $H\leq G$, a constant $\varepsilon\in G$, and
polynomial sequences $h:\Z\to H$ and $\gamma:\Z\to G$ such that
\begin{align*}
 a(n)=\varepsilon h(n)\gamma(n).
\end{align*}
The orbit $h(n)(H\cap\Gamma)$ is totally equidistributed in
$H/(H\cap\Gamma)$, while $\gamma(n)\Gamma$ is rational and periodic.
Let $q$ be a period and, for $0\leq r<q$, choose a rational element $\gamma_r\in G$
such that
\begin{align*}
 \gamma(n)\Gamma=\gamma_r\Gamma\qquad(n\equiv r\Mod{q}).
\end{align*}

Put
\begin{align*}
 H_r:=\gamma_r^{-1}H\gamma_r,\quad \Gamma_r:=H_r\cap\Gamma,\quad
 h_r(n):=\gamma_r^{-1}h(n)\gamma_r,\quad
 F_r(x\Gamma_r):=F(\varepsilon\gamma_r x\Gamma).
\end{align*}
Then $u(n)=F_r(h_r(n)\Gamma_r)$ whenever $n\equiv r\Mod{q}$.
The orbit $h_r(n)\Gamma_r$ is totally equidistributed in
$X_r:=H_r/\Gamma_r$ by \cite[Corollary~5.5]{MR3556289}. By
\cite[Proposition~2.8]{MR2318563}, the coordinate map defining $F_r$
is piecewise polynomial. Hence $F_r$ is piecewise polynomial and, as
noted in \cite[Section~0.18]{MR2318563}, Riemann integrable. Therefore,
we have
\begin{align*}
 \int_{X_r}F_r\,dm_{X_r}
 =\lim_{N\to\infty}\E_{n\leq N}u(qn+r)=\delta.
\end{align*}
Applied to $F_r-\delta$, the progression estimate
\cite[Theorem~6.1]{MR3556289} gives the required bound for Lipschitz
functions. The $L^1$-approximation argument in \cite[Remark~(3)]{MR3556289} following
\cite[Theorem~2.2]{MR3556289} applies with the progression indicator, since
$|f|\leq1$, and gives
\begin{align*}
 \sup_{f\in\M}
 \left|\frac1N\sum_{\substack{n\leq N\\n\equiv r\Mod{q}}}
 f(n)(F_r(h_r(n)\Gamma_r)-\delta)\right|=o(1).
\end{align*}
Summing over $r$ proves the lemma.
\end{proof}

\begin{proof}[Proof of Corollary~\ref{cor:genpoly}]
First let $I=[c,d)\subseteq[0,1)$. For each fixed $Q\in\N$, the function
\begin{align*}
 u_Q(n):=1_I(\{p(Qn)\})
 =\lfloor\{p(Qn)\}-c\rfloor-\lfloor\{p(Qn)\}-d\rfloor
\end{align*}
is a bounded generalized polynomial with mean $|I|$ on every
arithmetic progression. Lemma~\ref{lem:gp-short} shows that, uniformly
for $f\in\M$, we have
\begin{align*}
 \sum_{\substack{n\leq N\\Qn\in \mathcal{L}_p(I)}}f(n)
 =|I|\sum_{n\leq N}f(n)+o_{Q,p,I}(N).
\end{align*}
Adjusting the inclusion of an endpoint affects only $o_{Q,p,I}(N)$
terms by the assumed equidistribution. Hence the same estimate holds
for every interval in the statement.
Taking $f=1$ gives
\begin{align*}
 |\{n\leq N:Qn\in \mathcal{L}_p(I)\}|=|I|N+o_{Q,p,I}(N).
\end{align*}
Dividing the preceding two estimates and comparing with $Q=1$ proves
multiplicative pseudorandomness, and Theorem~\ref{thm:main} gives
multiplicative recurrence.
\end{proof}

We conclude this section by applying Corollary~\ref{cor:genpoly} to the
examples stated in the introduction.

\begin{proof}[Verification of Example~\ref{ex:beatty}]
Put $\gamma:=\alpha^{-1}$ and $\eta:=(1-\beta)/\alpha$. For all
sufficiently large positive integers $m$, we have
\begin{align*}
 m\in\mathcal B_{\alpha,\beta}
 \quad\Longleftrightarrow\quad
 [\gamma m+\eta-\gamma,\gamma m+\eta)\cap\Z\ne\varnothing
 \quad\Longleftrightarrow\quad
 \{\gamma m+\eta\}\in(0,\gamma].
\end{align*}
Since $\gamma$ is irrational, the linear polynomial
$p(m):=\gamma m+\eta$ is equidistributed modulo one on every arithmetic
progression. Corollary~\ref{cor:genpoly} therefore proves the claim,
since changing finitely many elements preserves multiplicative
pseudorandomness.
\end{proof}

\begin{proof}[Verification of Example~\ref{ex:bracket}]
We use the Heisenberg realization in \cite[Section~3]{younis2026sets}.
Let $G=\R^3$ with multiplication
\begin{align*}
 (x,y,z)(x',y',z')=(x+x',y+y',z+z'+xy'),
\end{align*}
and let $\Gamma=\Z^3$. The sequence $g(n):=(\beta n,\alpha n,0)$ is
polynomial with respect to the lower central series. Its horizontal
projection $(\beta n,\alpha n)\Mod{\Z^2}$ is equidistributed on every
arithmetic progression by the rational independence of $1,\alpha,\beta$.
Leibman's criterion \cite[Theorem~1.9]{MR2877065} therefore shows that
$(g(n)\Gamma)_{n\in\Z}$ is totally equidistributed in $X=G/\Gamma$.

For every non-zero integer $h$, the function
\begin{align*}
 F_h((x,y,z)\Gamma):=e\bigl(h(x\lfloor y\rfloor-z)\bigr)
\end{align*}
is well defined and Riemann integrable on $X$. On the fundamental domain
$[0,1)^3$, it equals $e(-hz)$ and hence has integral zero. Since
$F_h(g(n)\Gamma)=e(h\beta n\lfloor\alpha n\rfloor)$, Weyl's criterion
shows that the generalized polynomial $p(n):=\beta n\lfloor\alpha n\rfloor$
is equidistributed modulo one on every arithmetic progression.
Corollary~\ref{cor:genpoly} now proves the claim.
\end{proof}

\begin{proof}[Verification of Example~\ref{ex:polynomial}]
For every $a\in\N$ and $b\in\Z$, the polynomial $P(ax+b)$ has an
irrational non-constant coefficient. Otherwise, the substitution
$x\mapsto(x-b)/a$ would give $P-P(0)\in\Q[x]$. Weyl's theorem
\cite[Chapter~1, Theorem~3.2]{MR419394} therefore verifies the hypothesis
of Corollary~\ref{cor:genpoly}, which proves the claim.
\end{proof}

\section{Value distribution of unimodular completely multiplicative functions}
\label{sec:dist}

Inspired by the resolution of the K\'atai--Subbarao conjecture by
Klurman and Mangerel \cite{MR3856825}, we determine the limit points of
unimodular completely multiplicative functions along multiplicatively
pseudorandom sets (see also \cite{leung2026valuedistributionmultiplicativefunctions, tardy2026twopointcorrelationsmultiplicativefunctions}). For a sequence of complex numbers $(z_n)$, we denote
its set of limit points by
\begin{align*}
 \omega(z_n):=\bigcap_{N=1}^{\infty}
 \overline{\{z_n:n\geq N\}}.
\end{align*}
Whenever a sequence is indexed by an infinite subset of $\N$, its terms
are taken in increasing order. For $k\in\N$, we also write
\begin{align*}
 \mu_k:=\{z\in\T:z^k=1\}.
\end{align*}
The \textit{order} of $f\in\M$ is the least integer $k \in \N$ for which
$f^k\equiv1$, if such an integer exists.

\begin{theorem}\label{thm:densepseudo}
Let $R\subseteq\N$ be multiplicatively pseudorandom and let $f\in\M$.
Then
\begin{align*}
\omega\bigl((f(n))_{n\in R}\bigr)=
\begin{cases}
\mu_k&\mbox{{\normalfont if $f$ has finite order $k$,}}\\
\T&\mbox{{\normalfont if $f$ has infinite order.}}
\end{cases}
\end{align*}
\end{theorem}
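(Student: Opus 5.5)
Both inclusions reduce to one fact: every point of the closed subgroup generated by the values of $f$ is approached by $f(n)$ for infinitely many $n\in R$. We obtain this fact from Corollary~\ref{cor:heil}, applied to suitable powers and twists of $f$.

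\emph{Upper bound.} Let $\Lambda:=\overline{f(\N)}$. This is a closed subsemigroup of $\T$, hence a closed subgroup. So $\Lambda=\mu_k$ when $f$ has finite order $k$: the values lie in $\mu_k$ and generate a finite group of exponent exactly $k$, which is cyclic and hence equal to $\mu_k$. Otherwise $\Lambda=\T$, since an infinite closed subgroup of $\T$ is all of $\T$. In either case $\omega((f(n))_{n\in R})\subseteq\Lambda$. We also note that $R$ is infinite, since $R\cap Q\N$ is infinite, so the sequence has limit points.

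\emph{Lower bound.} Fix $z\in\Lambda$ and $\varepsilon>0$. Choose $m\in\N$ with $|f(m)-z|<\varepsilon$. We want infinitely many $n\in R$ with $f(n)$ close to $f(m)$, and it suffices to find infinitely many $n\in R$ with $f(n/m)$ close to $1$. The natural route is to show that $mR$, or rather $R':=\{n\in\N: n\in R,\ m\mid n\}$ rescaled, is still multiplicatively pseudorandom, but dividing breaks the definition. A cleaner approach is to apply Theorem~\ref{thm:suff} directly to the rotation action $T_nz=f(n)z$ on $\T$ and exploit its proof. Proposition~\ref{prop} with the spectral measure $\sigma$ of the rotation (for $B$ an arc of length $\eta$) forces many $n=Qn'\in R$ with $f(n)$ returning near $1$. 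To reach other points we use a twisted version. Consider the function $F(w):=1_{B}(w)$ on $\T$ and the correlation
\[
c(n):=\mu(T_n^{-1}B\cap zB),
\]
which equals $\int\overline{\chi(z)}\dots$ in Fourier terms: writing $\hat 1_B(j)$ for the Fourier coefficients, we have $c(n)=\sum_j|\hat1_B(j)|^2\chi_j(z)^{-1}f(n)^j$. So $c(n)=\int_\M g(n)\,d\sigma_z(g)$ with a complex measure $\sigma_z$ supported on $\{f^j\}$. Proposition~\ref{prop} holds verbatim for finite complex measures: its proof only used boundedness and dominated convergence.

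\emph{Key step and main obstacle.} With this extension of Proposition~\ref{prop}, the average of $c(Qn)$ over $Q\in\Phi_K$ and $Qn\in R$ tends to $\sigma_z(\{1\})=\sum_{j:f^j\equiv1}|\hat1_B(j)|^2 z^{-j}$. If $f$ has infinite order, only $j=0$ contributes, giving $\eta^2>0$. If $f$ has order $k$, the sum runs over $j\in k\Z$, and $z^{-j}=1$ because $z\in\mu_k$. This gives $\sum_{j\in k\Z}|\hat 1_B(j)|^2>0$. The main obstacle is positivity, and this computation is exactly where $z\in\Lambda$ is used: for $z\notin\Lambda$ the phases $z^{-j}$ could cancel. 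Since the limit is a positive real, some $n\in R$ (indeed infinitely many, arguing as in the proof of Theorem~\ref{thm:suff} with $N\to\infty$) satisfies $c(n)\neq0$, that is $f(n)B\cap zB\neq\varnothing$. This gives $|f(n)-z|\leq\eta$. Letting $\eta\to0$ yields $z\in\omega((f(n))_{n\in R})$, which completes both cases.
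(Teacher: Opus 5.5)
Your argument is correct, and it reaches the conclusion by a somewhat different route from the paper.

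\textbf{The paper's route.} The paper forms the averaged empirical measures $\nu_{K,N}=\E_{Q\in\Phi_K}\E_{n\leq N,\,Qn\in R}\delta_{f(Qn)}$. Using pseudorandomness and Lemma~\ref{lem:mult}, it shows $\widehat{\nu_{K,N}}(j)\to0$ in the iterated limit whenever $f^j\not\equiv1$. It then splits into cases: the Erd\H{o}s--Tur\'an inequality when $f$ has infinite order, and exact Fourier inversion on $\mu_k$ when $f$ has finite order. A tail-mass remark then gives arbitrarily large elements of $R$.

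\textbf{Your route.} You extend Proposition~\ref{prop} to the complex measure $\sigma_z=\sum_j|\widehat{1_B}(j)|^2z^{-j}\delta_{f^j}$, whose total variation is $m_{\T}(B)<\infty$. Its proof indeed only uses dominated convergence with respect to $|\sigma|$. You then read off the surviving atom $\sum_{f^j\equiv1}|\widehat{1_B}(j)|^2z^{-j}\geq m_{\T}(B)^2$, which is exactly where $z\in\Lambda$ enters. You finish with the counting step from the proof of Theorem~\ref{thm:suff}, using $0\leq c\leq1$.

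\textbf{Comparison.} Your average $\E_{Q}\E_{n}c(Qn)$ equals $\int_{\T}\phi_z\,d\nu_{K,N}$ for the tent function $\phi_z(w)=m_{\T}(wB\cap zB)$. So your argument tests the paper's measures against a single nonnegative bump at $z$ whose Fourier coefficients are absolutely summable. What this buys you:
\begin{itemize}
\item no discrepancy inequality is needed;
\item finite and infinite order are handled uniformly;
\item the proof reuses the recurrence machinery, so the theorem appears as a twisted recurrence statement for the rotation action.
\end{itemize}
The paper's route gives strictly more information: $\nu_{K,N}$ approaches Haar measure on $\Lambda$ uniformly over arcs, not just positive mass near each point.

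\textbf{Things to fix in the write-up.}
\begin{itemize}
\item With $T_nw=f(n)w$ one has $T_n^{-1}B=\overline{f(n)}B$. So $c(n)=\mu(T_n^{-1}B\cap zB)$ as you define it expands as $\sum_j|\widehat{1_B}(j)|^2z^{j}f(n)^j$ and detects $f(n)$ near $\bar z$. Your displayed expansion and your conclusion instead belong to $m_{\T}(f(n)B\cap zB)$. This is harmless because $\Lambda$ is closed under complex conjugation, but $c$ should be defined consistently. The fragment ``$\int\overline{\chi(z)}\dots$'' should be replaced by the explicit Parseval identity.
\item $c(n)>0$ gives $f(n)\bar z\in BB^{-1}$, an arc of normalized length $2m_{\T}(B)$ centred at $1$. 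Hence the bound is $|f(n)-z|\leq2\pi\, m_{\T}(B)$, not $\leq\eta$; this does not affect the argument.
\item The opening appeal to Corollary~\ref{cor:heil} and the abandoned ``divide by $m$'' idea play no role and should be deleted.
\end{itemize}
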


In particular, the theorem recovers the case $r=1$ of
Corollary~\ref{cor:heil}. Applying it to the completely multiplicative
function $f(n)=e(\Omega(n)/q)$ gives the following consequence.

\begin{corollary}
Let $R\subseteq\N$ be multiplicatively pseudorandom. For every positive
integer $q$ and every residue class $a\Mod{q}$, there exist infinitely many
$n\in R$ satisfying
\begin{align*}
\Omega(n)\equiv a\Mod{q},
\end{align*}
where $\Omega(n)$ counts prime factors with multiplicity.
\end{corollary}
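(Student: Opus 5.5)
The corollary follows from Theorem~\ref{thm:densepseudo} applied to $f(n):=e(\Omega(n)/q)$. The plan has three steps.

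First, $f$ lies in $\M$. Since $\Omega$ is completely additive, $\Omega(mn)=\Omega(m)+\Omega(n)$, and therefore $f(mn)=f(m)f(n)$ with $|f|=1$.

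Second, $f$ has exact order $q$. Clearly $f^q\equiv1$. For a prime $p$ we have $f(p)=e(1/q)$, a primitive $q$-th root of unity, so $f^k\not\equiv1$ for every $1\leq k<q$. Theorem~\ref{thm:densepseudo} then gives
\begin{align*}
\omega\bigl((f(n))_{n\in R}\bigr)=\mu_q .
\end{align*}
In particular, $e(a/q)\in\mu_q$ is a limit point of $(f(n))_{n\in R}$.

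Third, we pass from limit points to infinitely many elements. The values $f(n)$ all lie in the finite, hence discrete, set $\mu_q$. A point $z\in\mu_q$ belongs to $\overline{\{f(n):n\in R,\ n\geq N\}}$ only if $f(n)=z$ for some $n\in R$ with $n\geq N$. Since $e(a/q)$ lies in this closure for every $N$, there are infinitely many $n\in R$ with $e(\Omega(n)/q)=e(a/q)$, that is, $\Omega(n)\equiv a\Mod q$.

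None of these steps is a real obstacle. The only point needing care is the exact order computation in the second step, since that is what fixes the limit set as all of $\mu_q$ rather than a proper subgroup.
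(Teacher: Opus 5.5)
Your proof is correct and follows the paper's route: the paper obtains the corollary by applying Theorem~\ref{thm:densepseudo} to $f(n)=e(\Omega(n)/q)$, which has exact order $q$ because $f(p)=e(1/q)$. Your discreteness argument, which turns the limit point $e(a/q)\in\mu_q$ into infinitely many $n\in R$ with $\Omega(n)\equiv a\Mod q$, spells out a step the paper leaves implicit.
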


Theorem~\ref{thm:ps} supplies a multiplicatively pseudorandom subset of
the Piatetski--Shapiro set, which is sufficient for the same conclusion
about limit points.

\begin{corollary}
Let $1<c<6/5$ and let $f\in\M$. Then
\begin{align*}
\omega\bigl((f(n))_{n\in\mathcal P_c}\bigr)=
\begin{cases}
\mu_k&\mbox{{\normalfont if $f$ has finite order $k$,}}\\
\T&\mbox{{\normalfont if $f$ has infinite order.}}
\end{cases}
\end{align*}
\end{corollary}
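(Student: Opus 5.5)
The corollary follows by applying Theorem~\ref{thm:densepseudo} to the multiplicatively pseudorandom subset $R:=\mathcal P_c\cap\mathcal S_\delta$ supplied by Theorem~\ref{thm:ps}, and then comparing limit-point sets.

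\emph{Lower bound.} Since $R\subseteq\mathcal P_c$ and $R$ is infinite, every limit point of $(f(n))_{n\in R}$ is a limit point of $(f(n))_{n\in\mathcal P_c}$. Indeed, a limit point along $R$ is a limit of $f(n_j)$ for a strictly increasing sequence $n_j\in R$, and this is also a strictly increasing sequence in $\mathcal P_c$. Theorem~\ref{thm:densepseudo} therefore gives $\omega\bigl((f(n))_{n\in\mathcal P_c}\bigr)\supseteq\mu_k$ when $f$ has finite order $k$, and $\supseteq\T$ when $f$ has infinite order.

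\emph{Upper bound.} If $f$ has infinite order, there is nothing to prove, since all values lie in the closed set $\T$. If $f$ has finite order $k$, then $f(n)^k=f^k(n)=1$ for every $n$. Hence every value lies in the finite, and hence closed, set $\mu_k$, and so do all limit points. Combining the two inclusions gives equality in both cases.

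There is no substantive obstacle; the analytic input is entirely contained in Theorem~\ref{thm:ps} and Theorem~\ref{thm:densepseudo}. The one point to state carefully is the monotonicity of $\omega$ under passing to an infinite subset, which holds with the paper's convention that sequences indexed by subsets of $\N$ are taken in increasing order.
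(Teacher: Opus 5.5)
Your proof is correct and follows the paper's route. The paper likewise applies Theorem~\ref{thm:densepseudo} to the multiplicatively pseudorandom subset $\mathcal P_c\cap\mathcal S_\delta$ from Theorem~\ref{thm:ps}, and it leaves implicit the two elementary steps you spell out: limit sets are monotone under passing to infinite subsets, and the reverse inclusion holds because the values of $f$ lie in the closed set $\mu_k$ (or $\T$).
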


Theorem~\ref{thm:nilbohr} gives the analogous conclusion for regular irrational nil-Bohr sets.

\begin{corollary}
Let $\mathcal{B}$ be a regular irrational nil-Bohr set and let $f\in\M$. Then
\begin{align*}
\omega\bigl((f(n))_{n\in \mathcal{B}}\bigr)=
\begin{cases}
\mu_k&\mbox{{\normalfont if $f$ has finite order $k$,}}\\
\T&\mbox{{\normalfont if $f$ has infinite order.}}
\end{cases}
\end{align*}
\end{corollary}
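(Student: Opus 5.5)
The statement to prove is the last corollary: for a regular irrational nil-Bohr set $\mathcal B$ and $f\in\M$, the limit points of $(f(n))_{n\in\mathcal B}$ are $\mu_k$ when $f$ has finite order $k$, and $\T$ otherwise. The plan is to deduce it directly from Theorem~\ref{thm:densepseudo} together with Theorem~\ref{thm:nilbohr}. By Theorem~\ref{thm:nilbohr}, $\mathcal B$ is multiplicatively pseudorandom, so Theorem~\ref{thm:densepseudo} applies with $R=\mathcal B$ and gives exactly the displayed dichotomy. One point needs checking: Definition~\ref{def:mpr} requires $R\cap Q\N$ to be infinite for every $Q\in\Phi_K$. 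This follows from \eqref{eq:bscond}, or directly from Proposition~\ref{prop:nilbohr} with $f\equiv1$, which gives $D_Q(N)=\delta N+o(N)$ with $\delta=m_X(A)>0$.

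Since Theorem~\ref{thm:densepseudo} is stated before its proof, I sketch how I would establish it, as that is where the real content lies.

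\emph{Upper bound.} If $f^k\equiv1$, then every value $f(n)$ lies in $\mu_k$, so $\omega\subseteq\mu_k$ trivially. In the infinite-order case there is nothing to prove, since $\omega\subseteq\T$.

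\emph{Lower bound, setup.} Fix a target $z$ in the closed subgroup $H$ generated by the values of $f$; this subgroup is $\mu_k$ or $\T$. Consider the rotation $T_nw=f(n)w$ on $H$ with Haar measure. Theorem~\ref{thm:suff} only gives returns near $1$, not near $z$. So instead I would use the spectral identity, Proposition~\ref{prop}, with a weighted target. Take a smooth bump $\phi$ on $H$ concentrated near $z$, with Fourier expansion $\phi(w)=\sum_j\hat\phi(j)w^j$. Then $\phi(f(Qn))=\sum_j\hat\phi(j)f^j(Qn)$, which is integration of $f'(Qn)$ against a finite (absolutely summable) complex measure on $\M$ supported on the powers $f^j$. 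By linearity, Proposition~\ref{prop} extends from positive to complex measures. Its $K$-limit is the mass at the trivial character, namely $\sum_{j:f^j\equiv1}\hat\phi(j)=\int_H\phi>0$, because the characters of $H$ are exactly the $j$ modulo the order of $f$.

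\emph{Lower bound, extraction.} It follows that for large $K$ and all large $N$, some $Q\in\Phi_K$ satisfies $\E_{n\le N,\,Qn\in R}\phi(f(Qn))>0$. Hence infinitely many $m=Qn\in R$ have $f(m)$ in the support of $\phi$. Letting the support shrink shows $z\in\omega$.

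The main obstacle is keeping the weighted spectral identity honest: Proposition~\ref{prop} is stated for positive measures, and one has to justify the interchange of limits for a series of characters. I would handle this by taking $\phi$ to be a Fejér-type trigonometric polynomial with nonnegative values, so that only finitely many characters appear and linearity suffices. For the corollary itself, once Theorem~\ref{thm:densepseudo} is available, the deduction above is immediate.
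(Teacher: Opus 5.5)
Your deduction is exactly the paper's: Theorem~\ref{thm:nilbohr} makes $\mathcal B$ multiplicatively pseudorandom (the infinitude of $\mathcal B\cap Q\N$ is already part of that conclusion), and Theorem~\ref{thm:densepseudo} with $R=\mathcal B$ then gives the dichotomy. Your optional sketch of Theorem~\ref{thm:densepseudo} tests against a single nonnegative trigonometric polynomial instead of using Erd\H{o}s--Tur\'an and Fourier inversion, and it is a valid variant once two points are repaired: in the infinite-order case every nonzero trigonometric polynomial has full support on $\T$, so ``support'' must be replaced by a quantitative concentration bound (e.g.\ Fej\'er means of a bump supported in a small arc), and ``$>0$'' must be replaced by the uniform lower bound $\frac12\int_H\phi$ combined with $|R_Q\cap[1,N]|\to\infty$, as in \eqref{eq:ratio} and \eqref{eq:tail-mass}, to obtain arbitrarily large $m$.
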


To examine equidistribution, we recall the \textit{pretentious distance} from
the theory of multiplicative functions \cite{gs}.

\begin{definition}[Pretentious distance]
For $f,g\in\M$, define
\begin{align*}
\mathbb D(f,g)^2:=\sum_p\frac{1-\Re(f(p)\overline{g(p)})}{p}
=\frac12\sum_p\frac{|f(p)-g(p)|^2}{p}.
\end{align*}
We say that $f$ \textit{pretends} to be $g$, and write $f\sim g$, if
$\mathbb D(f,g)<\infty$. Otherwise, we write $f\not\sim g$.\footnote{The ordered pair $(\mathcal{M},\mathbb{D})$ forms an extended metric space, and $\sim$ is an equivalence relation.}
\end{definition}

\begin{definition}[Strictly non-pretentious function]
A function $f\in\M$ is \textit{strictly non-pretentious} if
$f\not\sim g\cdot n^{it}$ for every real $t$ and every finite-order
function $g\in\M$.
\end{definition}

The following criterion is due to Granville and Soundararajan and is
recorded in their unpublished book manuscript
\cite[Theorem~2.1.14]{gs}. We include the proof for completeness,
together with the equivalent formulation in terms of powers.

\begin{proposition}[Granville--Soundararajan]\label{prop:equid}
Let $f\in\M$. The sequence $(f(n))_{n\geq1}$ is equidistributed on
$\T$ if and only if $f$ is strictly non-pretentious. Equivalently, we have
\begin{align}\label{eq:powerpret}
f^j\not\sim n^{it}\qquad\text{for every $j \in \N$ and every $t\in\R$.}
\end{align}
\end{proposition}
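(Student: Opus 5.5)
By Weyl's criterion on $\T$, the sequence $(f(n))_{n\geq1}$ is equidistributed if and only if
\begin{align*}
\lim_{N\to\infty}\E_{n\leq N}f(n)^j=0\qquad\text{for every non-zero integer } j.
\end{align*}
Since $f^{-j}=\overline{f^j}$, it suffices to take $j\in\N$. Each $f^j$ lies in $\M$, so the question becomes when a unimodular completely multiplicative function has mean value zero. For this I will use Hal\'asz's theorem in the form of Delange--Wirsing--Hal\'asz. For $F\in\M$, the mean $\E_{n\leq N}F(n)$ tends to $0$ if and only if either $F\not\sim n^{it}$ for every $t$, or $F\sim n^{it}$ for some $t\neq0$ with $F(2^k)=-2^{ikt}$ for all $k$. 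In the completely multiplicative unimodular case this exceptional condition reads $F(2)=-2^{it}$. Conversely, if $F\sim n^{it}$, the mean is asymptotic to
\begin{align*}
\frac{N^{it}}{1+it}\prod_{p\leq N}\Bigl(1-\frac1p\Bigr)\Bigl(1-\frac{F(p)p^{-it}}{p}\Bigr)^{-1}.
\end{align*}
This product has modulus bounded below unless the Euler factor at $2$ vanishes, which happens exactly when $F(2)=-2^{it}$.

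First I will show that condition \eqref{eq:powerpret} implies equidistribution. If $f^j\not\sim n^{it}$ for all $j\in\N$ and all $t\in\R$, then Hal\'asz's theorem gives $\E_{n\leq N}f^j(n)\to0$ for every $j$, and Weyl's criterion finishes the argument.

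Next I will show that equidistribution implies \eqref{eq:powerpret}, arguing by contraposition. Suppose $f^j\sim n^{it}$ for some $j$ and $t$. If $t=0$, the displayed asymptotic gives a non-zero limit for the mean of $f^j$, provided $f^j(2)\neq-1$. To remove the $2$-adic obstruction, I will pass to $f^{2j}$. By the triangle inequality for $\mathbb D$, and since $\mathbb D(F^2,G^2)\leq2\mathbb D(F,G)$ for unimodular values, we get $f^{2j}\sim n^{2it}$. Moreover, $f^{2j}(2)=-1$ would require $f^j(2)=\pm i$. I will handle this by choosing a multiple $m=jr$ with $f^{m}(2)$ avoiding $-2^{imt}$. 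Such an $r$ exists because the quantities $(f^j(2)2^{-ijt})^r$ cannot all equal $-1$: taking $r=2$ after $r=1$ already fails, since $w=-1$ forces $w^2=1\neq-1$. So some $f^m$ with $f^m\sim n^{imt}$ has a non-vanishing Euler product. For $t\neq0$, the mean of $f^m$ behaves like $N^{imt}$ times a slowly varying non-zero factor, so it does not tend to $0$. This contradicts Weyl's criterion.

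The main obstacle is this converse direction, specifically the $2$-adic exceptional case and the justification that the Euler product stays bounded away from zero. I will quote the precise Hal\'asz asymptotic from \cite{gs} rather than reprove it, and use that the product converges, not merely stays bounded, when $F\sim n^{it}$.

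Finally I will prove the equivalence with strict non-pretentiousness. Suppose $f\sim g\,n^{it}$ with $g$ of finite order $k$. Then $f^k\sim g^kn^{ikt}=n^{ikt}$, because $\mathbb D(F^k,G^k)\leq k\,\mathbb D(F,G)$, by telescoping $|a^k-b^k|\leq k|a-b|$ on $\T$. Conversely, suppose $f^j\sim n^{it}$. I will construct $g$ by choosing, for each prime $p$, a $j$-th root $g(p)$ of $f(p)p^{-it/j}\cdot\bigl(f(p)^jp^{-it}\bigr)^{-1/j}$ in such a way that $g(p)^j=1$ and $|f(p)p^{-it/j}-g(p)|\ll|f(p)^jp^{-it}-1|$. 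This is possible by taking $g(p)$ to be the $j$-th root of unity nearest to $f(p)p^{-it/j}$. Then $f\sim g\,n^{it/j}$, and $g$ has order dividing $j$.
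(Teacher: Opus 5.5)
Your argument follows the paper's proof. Weyl's criterion and Hal\'asz's theorem applied to the powers $f^j$ give the equidistribution criterion. For the equivalence with strict non-pretentiousness you use the same bound $\mathbb D(F^k,G^k)\leq k\,\mathbb D(F,G)$ and the same nearest $j$-th root of unity construction. Your $\ll$ there holds with constant $1$, because $|1-e^{i\theta}|\leq|1-e^{ij\theta}|$ for $|\theta|\leq\pi/j$.

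Your treatment of the prime $2$, however, is wrong, although the proof survives it.

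\textbf{The exceptional case never occurs.} For completely multiplicative $F$, the condition $F(2^k)=-2^{ikt}$ for all $k\geq1$ can never hold. Indeed, $F(2)=-2^{it}$ forces $F(4)=F(2)^2=4^{it}\neq-4^{it}$. This is exactly the paper's one-line remark. The condition is therefore not equivalent to the single condition $F(2)=-2^{it}$, as you state.

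\textbf{The Euler factor at $2$ never vanishes.} When $F(2)2^{-it}=-1$, the factor $(1-F(2)2^{-it}/2)^{-1}$ equals $2/3$. More generally, $(1-1/p)\,|1-w/p|^{-1}\geq1/3$ for every $|w|=1$. So your claim that the mean of $F$ tends to $0$ when $F\sim n^{it}$ and $F(2)=-2^{it}$ is false. For example, $F(n)=n^{it}(-1)^{v_2(n)}$ with $t\neq0$ has mean $N^{it}/(3(1+it))+o(1)$.

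\textbf{Consequence for your proof.} The passage from $f^j$ to $f^{2j}$ is unnecessary. It is harmless only because the implication you actually use is true: if $F\sim n^{is}$ and $F(2)\neq-2^{is}$, then the mean of $F$ does not tend to $0$. That implication remains true without the proviso on $F(2)$.

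One smaller correction. When $F\sim n^{it}$, only the modulus of the Euler product converges to a positive limit, since $\sum_p\operatorname{Im}(F(p)p^{-it})/p$ may diverge. So for $t=0$ you get a mean whose modulus stays bounded away from $0$, not necessarily a non-zero limit. That is all your contrapositive needs.
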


\begin{proof}
By Weyl's criterion, Hal\'asz's theorem
\cite[Theorem~2.1.10]{gs}, and its formulation in
\cite[Exercise~2.1.7]{gs}, equidistribution is equivalent to
\eqref{eq:powerpret}. The exceptional case at the prime $2$ in the
general form of Hal\'asz's theorem would require
$f(2)^j=-2^{it}$ and $f(4)^j=-4^{it}$, contrary to complete
multiplicativity.

If $g^j\equiv1$, then
\begin{align*}
\mathbb D(f^j,n^{ijt})\leq j \,\mathbb D(f,g\cdot n^{it}).
\end{align*}
Conversely, suppose that $f^j\sim n^{it}$. At each prime $p$, choose
$g(p)$ to be a nearest $j$-th root of unity to $f(p)p^{-it/j}$, and
extend $g$ completely multiplicatively. Write
$f(p)\overline{g(p)}p^{-it/j}=e^{i\theta_p}$, where
$|\theta_p|\leq\pi/j$. Then
\begin{align*}
|1-f(p)\overline{g(p)}p^{-it/j}|
=|1-e^{i\theta_p}|\leq|1-e^{ij\theta_p}|
=|1-f(p)^jp^{-it}|.
\end{align*}
Consequently, we have $\mathbb D(f,g\cdot n^{it/j})<\infty$, which
proves the claimed equivalence.
\end{proof}

The same criterion holds along the smooth Piatetski--Shapiro sets considered above.

\begin{theorem}\label{thm:psheil}
Let $1<c<6/5$, and choose $\delta>0$ as in Theorem~\ref{thm:ps}. For every
$f\in\M$, the sequence $(f(n))_{n\in\mathcal P_c\cap\mathcal S_\delta}$
is equidistributed on $\T$ if and only if $f$ is strictly
non-pretentious.\footnote{Adapting the argument in
\cite[Section~2]{MR3705270}, one can show that without the smoothness
restriction, Theorem~\ref{thm:psheil} is false in general.}
\end{theorem}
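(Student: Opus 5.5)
By Weyl's criterion, the sequence $(f(n))_{n\in\mathcal P_c\cap\mathcal S_\delta}$ is equidistributed on $\T$ if and only if, for every $j\in\N$,
\begin{align*}
\E_{\substack{n\leq N\\n\in\mathcal P_c\cap\mathcal S_\delta}}f(n)^j\to0 \qquad (N\to\infty).
\end{align*}
Since $f^j\in\M$, we apply \eqref{eq:sumps} with $Q=1$ to $f^j$. This reduces the problem to showing that the weighted smooth-number mean
\begin{align*}
W_N(g):=\frac{1}{N^\gamma}\sum_{\substack{n\leq N\\n\in\mathcal S_\delta}}\frac{g(n)}{n^{1-\gamma}}
\end{align*}
tends to $0$ for every $g=f^j$, $j\in\N$, exactly when $f$ is strictly non-pretentious. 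By Proposition~\ref{prop:equid}, strict non-pretentiousness is equivalent to \eqref{eq:powerpret}. So it suffices to prove, for a single $g\in\M$, that $W_N(g)\to0$ if and only if $g\not\sim n^{it}$ for all $t\in\R$.

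For the direction "non-pretentious implies $W_N(g)\to 0$", partial summation writes $W_N(g)$ as
\begin{align*}
W_N(g)=\frac{1}{N^{\gamma}}\int_1^N x^{\gamma-1}\,d\Bigl(\sum_{\substack{n\leq x\\ n\in\mathcal S_\delta}}g(n)\Bigr).
\end{align*}
It is therefore enough to show that $x^{-1}\sum_{n\leq x,\,n\in\mathcal S_\delta}g(n)\to0$. Here the cutoff $n^\delta$ can be replaced by the fixed cutoff $x^\delta$ at cost $O(\varepsilon)$, using the same sandwiching argument as in \eqref{eq:ps-smooth-density}. Writing $1_{P^+(n)\leq y}\,g(n)=g_y(n)$ with $y=x^\delta$, the function $g_y$ is completely multiplicative, vanishes on primes $>y$, and satisfies $\mathbb D(g_y,n^{it};x)^2\geq \mathbb D(g,n^{it};x)^2-O(1)$ uniformly in $|t|\leq T$. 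The $O(1)$ comes from $\sum_{y<p\leq x}2/p\ll\log(1/\delta)$. Hal\'asz's theorem \cite[Theorem~2.1.10]{gs}, in its quantitative form, bounds the mean of $g_y$ by $(1+M)e^{-M}+T^{-1/2}$ with $M=\min_{|t|\leq T}\mathbb D(g_y,n^{it};x)^2$. The hypothesis $g\not\sim n^{it}$ for every $t$, together with the standard compactness argument that makes $\min_{|t|\leq T}\mathbb D(g,n^{it};x)\to\infty$, then gives the claim.

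For the converse, suppose $g\sim n^{it}$. We show that $|W_N(g)|$ stays bounded below along a sequence of $N$. Write $g=n^{it}h$ with $h\sim1$. The smooth-number mean of $h$, namely $x^{-1}\sum_{n\leq x,\,n\in\mathcal S_\delta}h(n)$, is asymptotically
\begin{align*}
\rho(\delta^{-1})\prod_{p\leq x}\Bigl(1-\frac1p\Bigr)\Bigl(1-\frac{h(p)}p\Bigr)^{-1}(1+o(1)).
\end{align*}
This is a Wirsing/Delange-type statement for smooth numbers. Alternatively, $h$ can be factored as $h=1*k$ with $\sum|k(m)|/m<\infty$ over a suitable range, and then each $m$ is summed using \eqref{eq:ps-smooth-density}. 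The Euler product has modulus bounded away from $0$ and slowly varying argument. Partial summation with the oscillating weight $n^{it}$ then yields
\begin{align*}
W_N(g)\approx \frac{\gamma}{\gamma+it}\,N^{it}\,\rho(\delta^{-1})\,P(N),
\end{align*}
where $|P(N)|\asymp1$. Hence $|W_N(g)|\gg1$, and $W_N(g)$ does not tend to $0$.

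The main obstacle is this converse: establishing the smooth-number analogue of Delange's theorem for $h\sim1$. The truncation by $p\leq n^\delta$ interacts with the convolution decomposition, and it must be checked that large primes with $h(p)\neq1$ (those beyond $x^\delta$) do not alter the limiting constant. My plan is to write $h=1*k$ with $k$ supported on integers whose prime factors satisfy $h(p)\neq1$, truncate $k$ to $m\leq x^{\varepsilon}$ using $\sum_p|1-h(p)|^2/p<\infty$ together with the Cauchy--Schwarz inequality, and for each fixed $m$ count $n=m\ell\leq x$ with $\ell$ being $(x^\delta)$-smooth. The count is $\rho(\delta^{-1})x/m+o(x)$, and summing over $m$ recovers the Euler product.
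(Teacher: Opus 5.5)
Your reduction (Weyl's criterion, \eqref{eq:sumps} with $Q=1$, and the power formulation \eqref{eq:powerpret} from Proposition~\ref{prop:equid}) is exactly the paper's. Your non-pretentious direction matches the paper's proof of Proposition~\ref{prop:weightedhalasz}: truncation to $P^+(n)\leq N^\delta$, quantitative Hal\'asz, compactness in $t$, then partial summation.

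The gap is in the converse, at the step where you truncate $k=h*\mu$ to $m\leq x^{\varepsilon}$. The hypothesis $h\sim1$ gives only $\sum_p|1-h(p)|^2/p<\infty$, not $\sum_p|1-h(p)|/p<\infty$. For instance, $h(p)=e^{i/\log\log p}$ satisfies the first condition. Yet $\sum_m|k(m)|/m=\infty$, since its partial sums up to $y$ grow like $\log\log y$. Consequently the window sums $\sum_{x^\varepsilon<m\leq x}|k(m)|/m$ do not tend to zero, so the tail $\sum_{x^\varepsilon<m\leq x}k(m)\lfloor x/m\rfloor$ cannot be discarded using absolute values. Cauchy--Schwarz does not rescue this. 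Passing from the $\ell^1$ quantity $\sum|k(m)|/m$ to the available $\ell^2$ information $\sum|k(m)|^2/m$ costs a divergent factor such as $(\sum_{m\leq x}1/m)^{1/2}$. Any smallness of the tail would have to come from cancellation in $k$, and your plan supplies no mechanism for it. For the same reason, summing $k(m)/m$ over $m\leq x^\varepsilon$ is not known to reproduce the Euler product. Your target asymptotic (with the slowly drifting argument) is correct; the proposed route to it is not.

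The paper avoids this by truncating in the primes rather than in $m$. It applies the Tur\'an--Kubilius inequality to the additive function $\sum_{p>K}\theta_pv_p(n)$, which needs only $\sum_{p>K}\theta_p^2/p\to0$. This approximates $h$ in $L^1$ on $[1,N]$ by $u_{K,N}h_K$. Here $h_K$ agrees with $h$ at primes $p\leq K$ and equals $1$ beyond, and the single phase $u_{K,N}\in\T$ absorbs the drifting argument. Splitting at $\varepsilon_KN$ transfers the approximation directly to the weight $n^{\gamma-1}$, so no partial summation against a slowly varying factor $P(N)$ is needed. Now $h_K=1*b_K$ with $b_K$ supported on $K$-smooth integers, so $\sum_d|b_K(d)|/d<\infty$. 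Then \eqref{eq:smoothfixedmult} with dominated convergence gives the constant $C_K\rho(\delta^{-1})$, and $|C_K|\to C(h)>0$. Replacing your $m$-truncation by this prime truncation repairs the converse.
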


The nil-Bohr mean-value estimate gives the analogous criterion for
regular irrational nil-Bohr sets.

\begin{theorem}\label{thm:nilbohrheil}
Let $\mathcal{B}$ be a regular irrational nil-Bohr set. For every $f\in\M$, the
sequence $(f(n))_{n\in \mathcal{B}}$ is equidistributed on $\T$ if and only if
$f$ is strictly non-pretentious.
\end{theorem}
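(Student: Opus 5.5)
By Weyl's criterion, equidistribution of $(f(n))_{n\in\mathcal B}$ on $\T$ is equivalent to
\begin{align*}
\lim_{N\to\infty}\E_{\substack{n\leq N\\ n\in\mathcal B}}f(n)^j=0\qquad\text{for every non-zero integer } j.
\end{align*}
Since $\overline{f^j}=f^{-j}$, it suffices to treat $j\in\N$.

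For each $j$ the function $f^j$ again lies in $\M$. We apply Proposition~\ref{prop:nilbohr} with $Q=1$, in the normalized form \eqref{eq:nilbohr-mean}, to obtain
\begin{align*}
\E_{\substack{n\leq N\\ n\in\mathcal B}}f(n)^j=\E_{n\leq N}f(n)^j+o(1).
\end{align*}
Uniformity in $f$ is not even needed, since $j$ is fixed. Hence, for each $j$, the mean of $f^j$ along $\mathcal B$ tends to zero if and only if the mean of $f^j$ over all $n\leq N$ tends to zero.

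The plan is then to reduce to Proposition~\ref{prop:equid}. The sequence $(f(n))_{n\in\mathcal B}$ is equidistributed if and only if $\E_{n\leq N}f^j(n)\to0$ for all $j\geq1$, which by Weyl's criterion says exactly that $(f(n))_{n\geq1}$ is equidistributed on $\T$. By Proposition~\ref{prop:equid}, this holds if and only if $f$ is strictly non-pretentious. Both directions follow at once from the displayed identity, since it shows that the two families of Weyl sums have the same limiting behaviour term by term.

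The only point needing care is the normalization. The average over $\mathcal B$ is divided by $|\mathcal B\cap[1,N]|$, while Proposition~\ref{prop:nilbohr} is stated with the normalization by $N$. These agree because $|\mathcal B\cap[1,N]|=(m_X(A)+o(1))N$ with $m_X(A)>0$, which is exactly how \eqref{eq:nilbohr-mean} was derived. No genuine obstacle remains: the substantive input, the Frantzikinakis--Host orthogonality, is already packaged in Proposition~\ref{prop:nilbohr}, and the pretentious characterization is contained in Proposition~\ref{prop:equid}.
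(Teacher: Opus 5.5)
Your proposal is correct and follows the paper's proof essentially verbatim. You apply \eqref{eq:nilbohr-mean} with $Q=1$ to each power $f^j$, handle negative frequencies by conjugation, and conclude with Weyl's criterion and Proposition~\ref{prop:equid}.
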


We first prove the assertion about limit points by combining the
$Q$-trick with the Erd\H{o}s--Tur\'an inequality as in \cite[Sections 5--6]{leung2026valuedistributionmultiplicativefunctions}.

\begin{proof}[Proof of Theorem~\ref{thm:densepseudo}]
For every sufficiently large $K$ and all sufficiently large $N$, define
the probability measure
\begin{align*}
 \nu_{K,N}:=\E_{Q\in\Phi_K}
       \E_{\substack{n\leq N\\Qn\in R}}\delta_{f(Qn)}
\end{align*}
on $\T$. For every fixed positive integer $j$, multiplicative
pseudorandomness gives
\begin{align*}
 \widehat\nu_{K,N}(j)
 &=\E_{Q\in\Phi_K}f(Q)^j
       \E_{\substack{n\leq N\\Qn\in R}}f(n)^j\\
 &=\E_{Q\in\Phi_K}f(Q)^j
       \E_{\substack{n\leq N\\n\in R}}f(n)^j+o_{K,f,j}(1)
\end{align*}
as $N\to\infty$. Hence, if $f^j\not\equiv1$, Lemma~\ref{lem:mult}
gives
\begin{align}\label{eq:fourier-flat}
 \lim_{K\to\infty}\limsup_{N\to\infty}
 |\widehat\nu_{K,N}(j)|=0.
\end{align}
For later use, observe that for every fixed $K$ and $x\geq1$, we have
\begin{align}\label{eq:tail-mass}
 \lim_{N\to\infty}\E_{Q\in\Phi_K}
 \frac{|\{n\leq N:Qn\in R,\ Qn<x\}|}
 {|R_Q\cap[1,N]|}=0.
\end{align}
Indeed, the numerator is bounded, while each denominator tends to
infinity.

Suppose first that $f$ has infinite order. For every $J\in\N$, the
Erd\H{o}s--Tur\'an inequality
\cite[Chapter~2, Theorem~2.5]{MR419394} gives
\begin{align*}
 \sup_{I\in\mathcal I}|\nu_{K,N}(I)-m_{\T}(I)|
 \ll\frac1J+\sum_{j=1}^J\frac{|\widehat\nu_{K,N}(j)|}{j},
\end{align*}
where $\mathcal I$ is the collection of half-open arcs in $\T$ and
$m_{\T}$ denotes the Haar probability measure on $\T$. Since $f^j\not\equiv1$ for
all $j\geq1$, the expression \eqref{eq:fourier-flat} implies that
\begin{align*}
 \lim_{K\to\infty}\limsup_{N\to\infty}
 \sup_{I\in\mathcal I}|\nu_{K,N}(I)-m_{\T}(I)|=0.
\end{align*}
Together with \eqref{eq:tail-mass}, this shows that every arc of positive
length contains $f(m)$ for arbitrarily large $m\in R$. Therefore, we have
\begin{align*}
 \omega\bigl((f(n))_{n\in R}\bigr)=\T.
\end{align*}

Suppose now that $f$ has finite order $k$. The measure $\nu_{K,N}$ is
supported on $\mu_k$. For $\zeta\in\mu_k$, Fourier inversion gives
\begin{align*}
 \nu_{K,N}(\{\zeta\})
 =\frac1k\sum_{j=0}^{k-1}\zeta^{-j}\widehat\nu_{K,N}(j).
\end{align*}
By the minimality of $k$, we have $f^j\not\equiv1$ for
$1\leq j<k$. It follows from \eqref{eq:fourier-flat} that
\begin{align*}
 \lim_{K\to\infty}\limsup_{N\to\infty}
 \max_{\zeta\in\mu_k}
 \left|\nu_{K,N}(\{\zeta\})-\frac1k\right|=0.
\end{align*}
The expression \eqref{eq:tail-mass} now shows that every element of
$\mu_k$ occurs as $f(m)$ for arbitrarily large $m\in R$. Since all
values of $f$ belong to $\mu_k$, the result follows.
\end{proof}

For Theorem~\ref{thm:psheil}, we require the following weighted version
of Hal\'asz's theorem for $\delta$-smooth integers, which is of independent
interest.

\begin{proposition}\label{prop:weightedhalasz}
Let $0\leq\sigma<1$ and $0<\delta\leq1$, and let $f\in\M$. Then
\begin{align*}
\lim_{N\to\infty}\frac1{N^{1-\sigma}}
\sum_{\substack{n\leq N\\n\in\mathcal S_\delta}}
\frac{f(n)}{n^\sigma}=0
\end{align*}
if and only if $f\not\sim n^{it}$ for every $t\in\R$.
\end{proposition}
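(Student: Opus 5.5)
Partial summation reduces the weight $n^{-\sigma}$, and the smoothness restriction is then removed by splitting the sum according to the largest prime factor. Put
\[
A(x):=\sum_{n\leq x,\ n\in\mathcal S_\delta}f(n).
\]
Partial summation gives
\[
\sum_{\substack{n\leq N\\ n\in\mathcal S_\delta}}\frac{f(n)}{n^\sigma}=N^{-\sigma}A(N)+\sigma\int_1^N A(u)u^{-\sigma-1}\,du .
\]
If $A(x)=o(x)$, both terms are $o(N^{1-\sigma})$. So the ``if'' direction follows once we know that $f\not\sim n^{it}$ for all $t$ implies $A(x)=o(x)$. Conversely, if $f\sim n^{it}$, we exhibit a nonzero main term of size $N^{1-\sigma}$ for the weighted sum.

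\emph{Non-pretentious direction.} We remove smoothness by splitting on $P^+(n)$. Every $n\leq x$ with $n\notin\mathcal S_\delta$ has a unique decomposition $n=mp$ with $p=P^+(n)>n^\delta$. Grouping by $p$, the non-smooth part of $\sum_{n\leq x}f(n)$ is
\[
\sum_{p}f(p)\sum_{\substack{m\leq x/p\\ P^+(m)\leq p,\ p>(mp)^\delta}}f(m).
\]
The conditions on $m$ for fixed $p$ say $m\leq\min(x/p,\,p^{1/\delta-1})$ together with $P^+(m)\leq p$. This is a sum of $f$ over $y$-smooth numbers up to $x/p$, restricted further by an upper bound. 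Here Hal\'asz's theorem \cite[Theorem~2.1.10]{gs} is not directly applicable, so I would instead use the Hal\'asz-type theorem for multiplicative functions supported on smooth numbers. Alternatively, I would apply Hal\'asz directly to the multiplicative function $f\cdot 1_{P^+(\cdot)\leq y}$, with $y=x^{\delta}$ fixed across a dyadic block $(x/2,x]$. Its distance to $n^{it}$ is
\[
\mathbb D(f1_{\leq y},n^{it})^2=\sum_{p\leq y}\frac{1-\Re f(p)p^{-it}}{p}+\sum_{y<p\leq x}\frac1p .
\]
The second sum is $\log(1/\delta)+o(1)$, which is bounded. Hence on $(x/2,x]$, replacing $P^+(n)\leq n^\delta$ by $P^+(n)\leq x^\delta$ costs only numbers with a prime factor in $((x/2)^\delta,x^\delta]$, a set of $O(x/\log x)$ elements. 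The Hal\'asz bound for $f1_{\leq x^\delta}$ then gives $o(x)$ exactly when
\[
\min_{|t|\leq T}\sum_{p\leq x^\delta}\frac{1-\Re f(p)p^{-it}}{p}\to\infty .
\]
I would derive this from $f\not\sim n^{it}$ for every $t$ by the usual compactness argument in $t$, together with a large-$|t|$ bound via Vinogradov--Korobov, exactly as in the standard proof of Hal\'asz--Wirsing. I expect this uniformity in $t$ to be the main technical point. The cleanest route is to quote the standard fact that $f\not\sim n^{it}$ for all $t$ implies $\min_{|t|\leq\log x}\mathbb D(f,n^{it};x)\to\infty$, and note that truncating at $x^\delta$ changes the distance by at most $\log(1/\delta)+o(1)$.

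\emph{Pretentious direction.} Suppose $f\sim n^{it_0}$. Write $f=g\,n^{it_0}$ with $g\sim1$. Partial summation moves the factor $n^{it_0}$ into the weight, so it suffices to show that
\[
\sum_{n\leq x,\ n\in\mathcal S_\delta}g(n)=\bigl(\rho(\delta^{-1})P(g)+o(1)\bigr)x ,
\]
where $P(g)=\prod_p(1-1/p)(1-g(p)/p)^{-1}$ is nonzero: it converges because $g\sim1$, and no Euler factor vanishes since $|g(p)|=1$ and complete multiplicativity forbid $g(2)=2$. This asymptotic is a Wirsing/Delange theorem on smooth numbers. I would prove it by the hyperbola-type decomposition $g=1*h$ with $h$ supported mostly on prime powers and $\sum|h(n)|/n$ "small" after removing small primes, combined with \eqref{eq:ps-smooth-density} applied to the dilates $x/d$.

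The weighted sum then becomes
\[
\rho(\delta^{-1})P(g)\int_1^N u^{it_0-\sigma}\,du+o(N^{1-\sigma})=\rho(\delta^{-1})P(g)\frac{N^{1-\sigma+it_0}}{1-\sigma+it_0}+o(N^{1-\sigma}),
\]
which has modulus $\asymp N^{1-\sigma}$. The limit therefore fails to be $0$, and the equivalence follows.
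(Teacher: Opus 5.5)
Your architecture matches the paper's: Hal\'asz for the truncated function $f1_{P^+(\cdot)\leq x^\delta}$ in one direction, and smooth-number density on dilates plus a convolution identity in the other. However, the pretentious direction rests on a false claim. The hypothesis $g\sim1$ controls only $\sum_p(1-\Re g(p))/p$, not $\sum_p\Im g(p)/p$, so $P(g)$ need not converge. For example, take $g(p)=e^{i/\log\log p}$ for large $p$. Then $g\sim1$, since $\sum_p 1/(p(\log\log p)^2)<\infty$. Yet $\arg\prod_{p\leq x}(1-1/p)(1-g(p)/p)^{-1}$ grows like $\sum_{p\leq x}1/(p\log\log p)\approx\log\log\log x$. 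For such $g$ the mean over $\mathcal S_\delta\cap[1,x]$ has convergent modulus but a drifting phase, so your asymptotic $(\rho(\delta^{-1})P(g)+o(1))x$ is false. The route through $g=1*h$ cannot prove it, or any corrected version, because here $\sum_{p>K}|h(p)|/p=\sum_{p>K}|g(p)-1|/p=\infty$ for every $K$. Your argument is valid only under Delange's hypothesis $\sum_p|1-g(p)|/p<\infty$, which is strictly stronger than $g\sim1$.

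The missing idea is to treat the large primes separately, as the paper does. Let $g_K$ agree with $g$ at $p\leq K$ and equal $1$ at $p>K$. Then $g_K=1*b_K$ with $\sum_d|b_K(d)|/d<\infty$, so your dilation argument applies to $g_K$. Next, apply Tur\'an--Kubilius to the additive function $\sum_{p>K}\theta_pv_p(n)$, where $g(p)=e^{i\theta_p}$. This gives $\frac1N\sum_{n\leq N}|g(n)-u_{K,N}g_K(n)|\ll\varepsilon_K$ for a phase $u_{K,N}\in\T$ that depends on $N$. Since non-vanishing needs only a lower bound on the modulus, it then suffices that $|C_K|$ tends to a positive limit, and that $g\sim1$ does guarantee.

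A smaller point concerns the non-pretentious direction: the ``standard fact'' you propose to quote is false. Pointwise non-pretentiousness does not give $\min_{|t|\leq\log x}\mathbb D(f,n^{it};x)\to\infty$. Take $f(p)=p^{it_j}$ on blocks $x_{j-1}<p\leq x_j$. Choose $t_j-t_{j-1}$ by simultaneous approximation so that $p^{i(t_j-t_{j-1})}$ is extremely close to $1$ for all $p\leq x_{j-1}$, and then take $x_j\geq\exp(\exp|t_j|)$. This gives $\mathbb D(f,n^{it_j};x_j)=O(1)$, even though $f\not\sim n^{it}$ for every fixed $t$.

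You need no such uniformity in $t$, and no Vinogradov--Korobov input. Use the quantitative Hal\'asz bound with $T$ fixed, namely $(1+M)e^{-M}+1/T$, where $M\geq\min_{|t|\leq T}\sum_{p\leq x^\delta}(1-\Re f(p)p^{-it})/p$. Compactness of $[-T,T]$, together with monotonicity in $x$, gives $M\to\infty$, and you let $T\to\infty$ only at the end. With this change, your dyadic argument is essentially the paper's proof.
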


\begin{proof}
Put $\rho_\delta:=\rho(1/\delta)$. The argument for
\eqref{eq:ps-smooth-density}, with the smoothness threshold
$(dm)^\delta$, also gives
\begin{align}\label{eq:smoothfixedmult}
|\{m\leq x:dm\in\mathcal S_\delta\}|\sim\rho_\delta x \qquad \text{as $x \to \infty$}
\end{align}
for every fixed $d\in\N$.

Suppose first that $f\not\sim n^{it}$ for every $t\in\R$, and put
$f_N(n):=f(n)1_{P^+(n)\leq N^\delta}$. Notice that the sets
$\mathcal S_\delta\cap[1,N]$ and $\{n\leq N:P^+(n)\leq N^\delta\}$
differ by $o_\delta(N)$ integers. By assumption and compactness, for any fixed $T\geq1$, we have
\begin{align*}
M_N(T):=\min_{|t|\leq2T}\sum_{p\leq N^\delta}
\frac{1-\Re(f(p)p^{-it})}{p}\longrightarrow\infty \qquad \text{as $N \to \infty$}.
\end{align*}
The quantitative Hal\'asz theorem
\cite[Corollary~1]{MR2016245}, applied to $f_N$, therefore gives
\begin{align*}
\frac1N\sum_{\substack{n\leq N\\n\in\mathcal S_\delta}}f(n)
\ll(1+M_N(T))e^{-M_N(T)}+\frac1T+o_\delta(1).
\end{align*}
Letting $N\to\infty$ and then $T\to\infty$ proves the assertion for
$\sigma=0$, and partial summation gives the remaining cases.

Conversely, suppose that $g(n):=f(n)n^{-it}$ satisfies $g\sim1$.
Write $g(p)=e^{i\theta_p}$ with $\theta_p \in (-\pi,\pi]$, and put
\begin{align*}
\varepsilon_K^2:=\sum_{p>K}\frac{\theta_p^2}{p}.
\end{align*}
Since $g\sim1$, we have $\varepsilon_K\to0$ as $K\to\infty$.
Let $g_K\in\M$ agree with $g$ at primes at most $K$ and equal $1$
at larger primes. The Tur\'an--Kubilius inequality
\cite[Proposition~2.2.3]{gs}, applied to
$\sum_{p>K}\theta_pv_p(n)$, gives a phase $u_{K,N}\in\T$ such that
\begin{align*}
\frac1N\sum_{n\leq N}|g(n)-u_{K,N}g_K(n)|\ll\varepsilon_K.
\end{align*}
Splitting at $\varepsilon_KN$ when $\varepsilon_K \in (0,1)$ gives
\begin{align}\label{eq:weighted-truncation}
\frac1{N^{1-\sigma}}\sum_{n\leq N}
\frac{|g(n)-u_{K,N}g_K(n)|}{n^\sigma}
\ll_\sigma\varepsilon_K^{1-\sigma}.
\end{align}
The same bound is immediate when $\varepsilon_K=0$, with $u_{K,N}=1$.

Write $g_K=1*b_K$. Since $b_K(p^j)=g(p)^{j-1}(g(p)-1)$ for
$p\leq K$ and vanishes for $p>K$, we have
\begin{gather*}
\sum_{d\geq1}\frac{|b_K(d)|}{d}<\infty,\\
C_K:=\sum_{d\geq1}\frac{b_K(d)}d
=\prod_{p\leq K}\frac{1-1/p}{1-g(p)/p}.
\end{gather*}
The expression \eqref{eq:smoothfixedmult}, partial summation and
dominated convergence now yield
\begin{align*}
\frac1{N^{1-\sigma}}
\sum_{\substack{n\leq N\\n\in\mathcal S_\delta}}
\frac{g_K(n)}{n^{\sigma-it}}
=\frac{ C_K \rho_\delta}{1-\sigma+it} \, N^{it}+o_{K,\delta,\sigma,t}(1).
\end{align*}
Moreover, the identity
\begin{align*}
\log|C_K|
=\sum_{p\leq K}\sum_{j\geq1}
\frac{\Re(g(p)^j)-1}{jp^j}
\end{align*}
and the absolute convergence of the series on the right show that
$|C_K|$ converges to a positive limit $C(g)$. Combining
these estimates with \eqref{eq:weighted-truncation} and then letting
$K\to\infty$ proves that
\begin{align*}
\lim_{N\to\infty}\frac1{N^{1-\sigma}}
\left|\sum_{\substack{n\leq N\\n\in\mathcal S_\delta}}
\frac{f(n)}{n^\sigma}\right|
=\frac{ C(g) \rho_\delta }{|1-\sigma+it|}>0,
\end{align*}
which completes the proof.
\end{proof}

We can now transfer the ordinary equidistribution criterion to the two families.

\begin{proof}[Proof of Theorem~\ref{thm:psheil}]
Write $\gamma:=1/c$. For every positive integer $j$, the expression
\eqref{eq:sumps} with $Q=1$ gives
\begin{align*}
\E_{\substack{n\leq N\\n\in\mathcal P_c\cap\mathcal S_\delta}}f(n)^j
=\frac{\gamma}{\rho(1/\delta)N^\gamma}
\sum_{\substack{n\leq N\\n\in\mathcal S_\delta}}
\frac{f(n)^j}{n^{1-\gamma}}+o(1).
\end{align*}
By Proposition~\ref{prop:weightedhalasz}, these means tend to zero
for every $j\geq1$ if and only if \eqref{eq:powerpret} holds. The
negative Fourier coefficients are their conjugates, and Weyl's criterion
completes the proof.
\end{proof}

\begin{proof}[Proof of Theorem~\ref{thm:nilbohrheil}]
For every positive integer $j$, the expression \eqref{eq:nilbohr-mean}
with $Q=1$ gives
\begin{align*}
\E_{\substack{n\leq N\\n\in \mathcal{B}}}f(n)^j
=\frac1N\sum_{n\leq N}f(n)^j+o(1).
\end{align*}
The negative Fourier coefficients are the conjugates of the positive
ones. Weyl's criterion and Proposition~\ref{prop:equid} therefore give
the result.
\end{proof}

\section{Limitations and future directions}

Multiplicative pseudorandomness is sufficient for recurrence, but it is not
necessary. In fact, it excludes every arithmetic progression $a\N+b$ with
$a\geq2$. If $a\nmid b$, then every $Q\in\Phi_K$ is divisible by $a$ once
$K$ is sufficiently large, and hence $Qn\notin a\N+b$ for every $n\in\N$.
Thus the infinitude condition in Definition~\ref{def:mpr} fails.

Suppose that $a\mid b$. Then $a\N+b$ differs from $a\N$ by a finite set.
Choose a prime $p\mid a$ and write $v:=v_p(a)$. Let $z\in\T$ satisfy
$z^v\ne1$, and define $f(n):=z^{v_p(n)}$. The elementary density formula
for $p$-adic valuations gives
\begin{align*}
\lim_{N\to\infty}\frac1N\sum_{n\leq N}f(n)
=&\left(1-\frac1p\right)\sum_{j\geq0}\frac{z^j}{p^j} \\
=&\frac{1-p^{-1}}{1-zp^{-1}}=:M_p(z)\ne0.
\end{align*}
For all sufficiently large $K$ and $Q\in\Phi_K$, the two means in
Definition~\ref{def:mpr} tend to $M_p(z)$ and $z^vM_p(z)$, respectively.
They are different, and hence the mean-value condition fails.

By contrast, Donoso, Le, Moreira, and Sun
\cite[Theorem~1.10]{MR4594405} proved that $a\N+b$ is multiplicatively
recurrent exactly when $a\mid b(b-1)$. Thus Theorem~\ref{thm:main} does not
capture all local congruence phenomena. A natural direction, suggested by
the \textit{$W$-trick} of Green and Tao \cite{MR2415379}, is to formulate a
version that retains the valuations at primes dividing $W$ and the
finite-order multiplicative information away from $W$. Obtaining a positive spectral lower bound
while preserving this local information is the main difficulty. We leave
such a criterion for future work.

\appendix
\section{Multiplicative ergodicity}

For one measure-preserving transformation, the usual definition of
ergodicity in \cite[Chapter~2]{MR2723325} concerns its additive iterates.
Here the parameter law is multiplication: an action is a family
$T=(T_n)_{n\in\N}$ satisfying $T_1=\rm id$  and
$T_{mn}=T_m\circ T_n$ for $m,n \in \N.$

\begin{definition}[Multiplicative ergodicity]
Let $T=(T_n)_{n\in\N}$ be a measure-preserving multiplicative action on
$(X,\mathcal B,\mu)$. Its \textit{invariant $\sigma$-algebra} is
\begin{align*}
 \mathcal I_T:=\{B\in\mathcal B:
 \mu(T_n^{-1}B\mathbin\triangle B)=0
 \text{ for every }n\in\N\}.
\end{align*}
A function $H\in L^2(\mu)$ is \textit{invariant} if $H\circ T_n=H$ in
$L^2(\mu)$ for every $n\in\N$. The action is \textit{multiplicatively ergodic} if every
set in $\mathcal I_T$ has measure zero or one, or equivalently, every invariant function in
$L^2(\mu)$ is constant almost everywhere.
\end{definition}

An invertible multiplicative action extends to $(\Q_{>0},\times)$ by
setting $T_{r/s}:=T_r\circ T_s^{-1}$ for
$r,s\in\N$. The mean ergodic theorem along the multiplicative F\o{}lner
sequence $(\Phi_K)$ gives the first convergence below
\cite[Proposition~4.8]{frantzikinakis2025decompositionresultsmultiplicativeactions}:
\begin{gather*}
 \lim_{K\to\infty}\E_{Q\in\Phi_K}H\circ T_Q
 =\E(H\mid\mathcal I_T)
 \qquad\text{in }L^2(\mu),\\
 \lim_{K\to\infty}\E_{Q\in\Phi_K}H\circ T_Q=\int_XH\,d\mu
 \qquad\text{in }L^2(\mu),\\
 \lim_{K\to\infty}\E_{Q\in\Phi_K} 
 \int_X \overline G \, (H\circ T_Q)\,d\mu
 =\overline{\left(\int_XG\,d\mu\right)}
 \left(\int_XH\,d\mu\right).
\end{gather*}
The action is ergodic if and only if the second convergence holds for
every $H\in L^2(\mu)$, and equivalently if and only if the final identity
holds for all $H,G\in L^2(\mu)$. For the converse in the last
characterization, take $G=H$ invariant. The identity gives
$\|H\|_2^2=|\int_XH\,d\mu|^2$, which forces $H$ to be constant
almost everywhere.

For $f\in\M$, let $T_n^f(z):=f(n)z$ on $(\T,m_{\T})$, and put
$X_f:=\overline{f(\N)}$. The maps $(T_n^f)_{n\in\N}$ form a
measure-preserving multiplicative action on the circle.

Classical Fourier analysis gives the following characterization, which connects
multiplicative ergodicity with the order appearing in
Section~\ref{sec:dist}.

\begin{theorem} \label{thm:mult-ergodic}
Let $f\in\M$. The multiplicative action $(T_n^f)_{n\in\N}$ on
$(\T,m_{\T})$ is ergodic if and only if $X_f=\T$, or equivalently if and only if
$f^k\not\equiv1$ for every $k \in \N$.
\end{theorem}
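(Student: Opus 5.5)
The plan is to use Fourier analysis on $\T$. Since $T_n^f$ is multiplication by the unimodular scalar $f(n)$, it preserves $m_{\T}$. A function $H\in L^2(m_{\T})$ has Fourier expansion $H(z)=\sum_{j\in\Z}\widehat H(j)z^j$, and
\begin{align*}
H\circ T_n^f(z)=\sum_{j\in\Z}\widehat H(j)f(n)^jz^j .
\end{align*}
Thus $H$ is invariant if and only if $\widehat H(j)\bigl(f(n)^j-1\bigr)=0$ for every $j\in\Z$ and every $n\in\N$. In other words, the Fourier support of $H$ must lie in the set
\begin{align*}
J_f:=\{j\in\Z:f^j\equiv1\}.
\end{align*}
This set is a subgroup of $\Z$: it is $\{0\}$ if $f$ has infinite order, and $k\Z$ if $f$ has finite order $k$.

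\textbf{Ergodicity versus order.} Suppose first that $f^k\not\equiv1$ for every $k\in\N$. Then $J_f=\{0\}$, so every invariant $H$ is constant and the action is ergodic. Conversely, suppose $f^k\equiv1$ for some $k\geq1$. Then $H(z)=z^k$ is invariant and non-constant, so the action is not ergodic. This gives the equivalence of ergodicity with the order condition.

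\textbf{Closure versus order.} It remains to show that $X_f=\T$ if and only if $f$ has infinite order. If $f$ has finite order $k$, every value of $f$ lies in $\mu_k$, so $X_f\subseteq\mu_k\neq\T$. If $f$ has infinite order, the plan is as follows.
\begin{itemize}
\item[(i)] The set $f(\N)$ is a multiplicative semigroup in $\T$ containing $1$. Its closure $X_f$ is therefore a closed subsemigroup of the compact group $\T$.
\item[(ii)] Every closed subsemigroup of a compact group is a group. For $x\in X_f$, the powers $x^{n}$ accumulate, so some subsequence $x^{n_i-n_{i-1}}$ tends to $1$; hence $x^{n_i-n_{i-1}-1}\to x^{-1}$, and $x^{-1}\in X_f$.
\item[(iii)] So $X_f$ is a closed subgroup of $\T$. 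The closed subgroups of $\T$ are $\T$ itself and the finite groups $\mu_m$.
\item[(iv)] If $X_f=\mu_m$, then $f^m\equiv1$, contradicting infinite order. Hence $X_f=\T$.
\end{itemize}
Alternatively, for infinite order one can invoke Theorem~\ref{thm:densepseudo} with $R=\N$. This requires $\N$ to be multiplicatively pseudorandom, which is immediate: $\E_{n\leq N, Qn\in\N}f(n)=\E_{n\leq N}f(n)$. That theorem then gives $\omega(f(n))=\T$, so $X_f=\T$.

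The only slightly delicate point is step (ii), that a closed subsemigroup of the compact group $\T$ is a subgroup. Invoking Theorem~\ref{thm:densepseudo} avoids it entirely, so I would take that route and keep the semigroup argument as a remark.
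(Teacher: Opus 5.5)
Your proof is correct. The ergodicity-versus-order part is exactly the paper's Fourier argument. The identity $\widehat H(j)=f(n)^j\widehat H(j)$ forces $\widehat H(j)=0$ for $j\neq0$ when $f$ has infinite order, and $z^k$ is a non-constant invariant function when $f^k\equiv1$.

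For the statement that $X_f=\T$ exactly when $f$ has infinite order, the paper uses precisely your steps (i)--(iv), compressed into one sentence. The closure of a subsemigroup of a compact group is a subgroup, and the closed subgroups of $\T$ are $\T$ and the $\mu_m$.

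Your preferred alternative, applying Theorem~\ref{thm:densepseudo} with $R=\N$, is also valid:
\begin{itemize}
\item $\N$ is trivially multiplicatively pseudorandom.
\item $\omega\bigl((f(n))_{n\in\N}\bigr)\subseteq X_f$.
\item There is no circularity, since the proof of Theorem~\ref{thm:densepseudo} never uses the appendix.
\end{itemize}
But this route buys nothing here. It proves an elementary topological fact via Lemma~\ref{lem:mult} (the Frantzikinakis--Klurman--Moreira orthogonality relation) and the Erd\H{o}s--Tur\'an inequality. The extra information it yields, namely values of $f$ in every arc for arbitrarily large arguments, is not needed.

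Your step (ii) is already a complete proof of the semigroup-to-group fact, using $x^0=1\in X_f$ and closure of $X_f$ under multiplication. So there is no delicacy to avoid. The self-contained semigroup argument is the better choice, and it is the one the paper takes.
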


\begin{proof}
The closure of a sub-semigroup of a compact group is a subgroup. Since the
closed subgroups of $\T$ are $\T$ and the finite cyclic groups, we have
$X_f=\T$ precisely when $f$ has infinite order. If $f^k\equiv1$, then
the non-constant function $H(z):=z^k$ is invariant under every $T_n^f$.
Conversely, suppose that $f$ has infinite order and that
$H\in L^2(m_{\T})$ is invariant. For every $j\in\Z$ and $n\in\N$, we have
\begin{align*}
 \widehat H(j)=f(n)^j\widehat H(j) .
\end{align*}
For every  $j \neq 0$, there exists $n\in\N$ satisfying
$f(n)^j\ne1$. It follows that $\widehat H(j)=0$ for every $j \neq 0$.
Thus $H$ is constant almost everywhere, and the action is ergodic.
\end{proof}

\section*{Acknowledgements}
The author would like to thank Andrew Granville, Oleksiy Klurman, James
Maynard, Andrei Shubin, Yu-Chen Sun,
Christian T\'afula, and Fei Xue for many helpful discussions, and acknowledges the use of ChatGPT for performing sanity checks and for rephrasing. The author
is supported by a Croucher Fellowship for Postdoctoral Research.

%\nocite{*}
\begingroup
\emergencystretch=4em
\printbibliography
\endgroup

%\vspace{-\baselineskip}

\end{document}